\documentclass[notitlepage]{amsart}






\newtheorem{pro}{Proposition}[section]
\newtheorem{teo}[pro]{Theorem}
\newtheorem{defi}[pro]{Definition}
\newtheorem{lem}[pro]{Lemma}
\newtheorem{cor}[pro]{Corollary}
\newtheorem{rk}[pro]{Remark}
\newtheorem{ex}[pro]{Example}

\newcommand{\Ext}{\mathrm{Ext}}
\newcommand{\Tor}{\mathrm{Tor}}

\newcommand{\Hom}{\mathrm{Hom}}

\newcommand{\A}{\mathcal{A}}
\newcommand{\B}{\mathcal{B}}

\newcommand{\I}{\mathcal{I}}

\newcommand{\C}{\mathcal{C}}
\newcommand{\F}{\mathcal{F}}

\newcommand{\Le}{\mathcal{L}}

\newcommand{\X}{\mathcal{X}}
\newcommand{\Y}{\mathcal{Y}}
\newcommand{\Z}{\mathcal{Z}}
\newcommand{\W}{\mathcal{W}}
\newcommand{\pd}{\mathrm{pd}}
\newcommand{\op}{\mathrm{op}}

\newcommand{\Gpd}{\mathrm{Gpd}}

\newcommand{\Proj}{\mathcal{P}}

\newcommand{\Inj}{\mathcal{I}}

\newcommand{\id}{\mathrm{id}}
\newcommand{\Gid}{\mathrm{Gid}}
\newcommand{\resdim}{\mathrm{resdim}}

\newcommand{\coresdim}{\mathrm{coresdim}}

\newcommand{\Modu}{\mathrm{Mod}}
\newcommand{\Ker}{\mathrm{Ker}}

\newcommand{\GP}{\mathcal{GP}}
\newcommand{\DP}{\mathcal{DP}}
\newcommand{\DI}{\mathcal{DI}}
\newcommand{\GF}{\mathcal{GF}}
\newcommand{\Flat}{\mathcal{F}}

\newcommand{\fd}{\mathrm{fd}}
\newcommand{\Gfd}{\mathrm{Gfd}}

\newcommand{\GI}{\mathcal{GI}}

\newcommand{\Ch}{\mathrm{Ch}}

\usepackage{latexsym,amssymb,amscd} 
\usepackage{amsmath}
\usepackage[all]{xypic}
\usepackage{amsthm}

\newcommand{\cogorro}{\vee}
\newcommand{\ortogonal}{\bot}
\newcommand{\gorro}{\wedge}

\begin{document}
\title[$(\Flat, \A)$-Gorenstein flat homological dimensions]{$(\Flat, \A)$-Gorenstein flat homological dimensions}
\author{V\'ictor Becerril}
\address[V. Becerril]{Centro de Ciencias Matem\'aticas. Universidad Nacional Aut\'onoma de M\'exico. 
 CP58089. Morelia, Michoac\'an, M\'EXICO}
\email{victorbecerril@matmor.unam.mx}

\thanks{2010 {\it{Mathematics Subject Classification}}. Primary 18G10, 18G20, 18G25. Secondary 16E10.}
\thanks{Key Words: }
\begin{abstract} 
 In this paper we develop the homological properties of the $(\Le, \A)$-Gorenstein flat $R$-modules $\GF_{(\Flat (R), \A)}$ proposed by Gillespie. Where the class $\A \subseteq \Modu (R^{\op})$ sometimes corresponds to a duality pair $(\Le, \A)$. We study the weak global and finitistic dimensions that comes with $\GF_{(\Flat (R), \A)}$ and show that over a $(\Le, \A)$-Gorenstein ring, the functor $-\otimes _R-$ is left balanced over $\Modu (R^{\op}) \times \Modu (R)$ by the classes $\GF_{(\Flat (R^{\op}), \A)} \times \GF_{(\Flat (R), \A)}$. When the duality pair is $(\Flat (R), \mathcal{FP}Inj (R^{\op}))$ we recover the G. Yang's result over a Ding-Chen ring, and we see that is new for $(\mathrm{Lev} (R), \mathrm{AC} (R^{\op}))$ among others.
\end{abstract}  
\maketitle

\section{Introduction} 

Duality pairs were defined and studied by H. Holm and P. J\o rgensen \cite{HJ}. Recall that for a left $R$-module $M$, its character module is defined to be the right $R$-module $M ^{+} := \Hom _{R} (M, \mathbb{Q}/ \mathbb{Z})$. A pair of classes $(\Le, \A) \subseteq \Modu (R) \times \Modu (R^{\op})$ is a duality pair if is such that  $L \in \Le $ if and only if $L ^{+} \in \A$, and $\A$ is closed under direct summands and finite direct sums.
 J. Gillespie \cite{Gill18} define the notion of $\mathrm{AC}$-Gorenstein ring which is a generalization of a Gorenstein ring compatible with the $\mathrm{AC}$-Gorenstein projective $R$-modules \cite[Definition 2.3]{BGH}. In \cite{Gill18} is described certain model structures that comes from this $\mathrm{AC}$-Gorenstein projective $R$-modules. In other paper J. Gillespie has introduced the notion of $(\Le, \A)$-Gorenstein projective, injective and flat $R$-modules  \cite{Gill19} (resp. denoted in this paper by $\GP_{(\Proj , \Le)}$, $\GI _{(\A, \Inj)}$ and $\GF_{(\Flat, \A)}$) that comes from a duality pair $(\Le, \A)$. Based on these ideas J. Wang and Z. Di \cite{Wang20} defines the concept of $(\Le, \A)$-Gorenstein ring with respect to a \textit{bi-complete} duality pair showing that such notion unify the notions of Gorenstein, Ding-Chen, $\mathrm{AC}$-Gorenstein, and Gorenstein $n$-coherent ring taking an appropriate duality pair. Among others results J. Wang and Z. Di  \cite[Theorem 4.8]{Wang20} have been proven that   over a \textit{bi-complete} duality pair $(\Le, \A)$ there is an hereditary and complete cotorsion triple that comes from the $(\Le, \A)$-Gorenstein projective, and $(\Le, \A)$-Gorenstein injective $R$-modules. This result recovers the G. Yang's result \cite[Theorem 3.6]{Yang} when $R$ is Ding-Chen and for the duality pair $(\Flat (R), \mathcal{FP}Inj (R^{\op}))$. More recently J. Gillespe and A. Iacob \cite[Corollary 5.1]{Gill21} extends the study of model structures given in \cite{BGH}, and \cite{Gill18} of a general way for the classes of $R$-modules $(\Le, \A)$-Gorenstein projective, injective and flat introduced in \cite{Gill19}. Thus, in the current literature, there is a treatment of the homotopic properties of the classes  $\GP_{(\Proj , \Le)}$, $\GI _{(\A, \Inj)}$ and $\GF_{(\Flat, \A)}$.
 
  We therefore see the importance to develop their homologic properties, which is the aim of the present paper. We develop this properties of the following manner.  In Section \ref{S2} we present general concepts from relative homological algebra in terms of an abelian category $\C$ which are important in the study of balance, such as relative homological dimensions, left and right approximations, relative cogenerators, cotorsion pairs.  Also recall the notion of GP-admissible pair and see how  some duality pairs and their properties give us GP-admissible pairs. Section \ref{S3} is devoted to develop relative homological dimensions that comes from the class $\GF _{(\Flat, \A)}$. In Section \ref{S4} we study the natural finitistic  and weak global dimensions relative to the class $\GF _{(\Flat, \A)}$,  we prove in Theorem \ref{EquivalenciaGlobal} that the left weak global dimension that comes from $\GF _{(\Flat, \A)}$ is characterized by the flat dimension of the class $\A$, we also prove in Proposition \ref{UnderLimit} that under certain conditions there is an under limit  given by the flat dimension of the class $\Le$. We also compare in Corollary \ref{GlobalFinitista} this  left weak global dimension with the finitistic dimension obtained from  $\GF _{(\Flat, \A)}$. The main results are presented in Section \ref{S5}. We first discussed the usefulness of the bi-complete pairs to define a $(\Le, \A)$-Gorenstein ring in the sense of Wang \cite[\S 4]{Wang20}. Also we see how the notion of GP-admissible pair \cite{BMS} enables us to obtain a relative $\Hom$-balance result for the classes $\GP_{(\Proj , \Le)}$, $\GI _{(\A, \Inj)}$ in Lemma \ref{LemaBalance}. Furthermore we prove in Lemma \ref{BalanBiperfect} that for a $(\Le, \A)$-Gorenstein ring the functor $-\otimes _R-$ is left balanced over the whole category $\Modu (R)$ by the class  $\GF _{(\Flat, \A)}$. Finally in Section \ref{S6} we give examples of the relative $\Hom$-balance and several examples of $(\Le, \A)$-Gorenstein rings where the $-\otimes_R -$ occurs and we see how one of this examples recover the well-know result on a Ding-Chen ring \cite[Theorem 3.23]{Yang}. Finally in Corollary \ref{Finitista} and Proposition \ref{IgualdadDing} we answer a question proposed by A. Iacob \cite{Alina20}, namely; \textit{When it is true that the Ding projective modules and the Gorenstein projective modules coincide?}. We also give conditions in Proposition \ref{Aplastado}  for other Gorenstein classes to coincide with the Gorenstein projective $R$-modules.

\section{Preliminaries} \label{S2}
In this section we will declare some of notation in terms of an abelian category $\C$. The general notation presented here will enable us to give short proofs and more clear concepts in the following sections. We denote by $\pd (C)$ the \textbf{projective dimension} of $C \in \C$, and by $\Proj(\C)$ the class of all the objects  $C \in \C$ with $\pd (C)=0$. Similarly, $\id (C) $ stands for the \textbf{injective dimension} of $C \in \C$, and $\Inj(\C)$ for the class of all the objects $C \in \C$ with $\id (C) =0$. Monomorphism and epimorphism in $\C$ may sometimes be denoted using arrows $\hookrightarrow$ and $\twoheadrightarrow$, respectively.

 Let $\X $ be  a class of objects in $\C$ and $M \in \C$. We set the following notation:
 \textit{Orthogonal classes}.  For each positive integer $i$, we consider the right orthogonal classes 
$$\X^{\ortogonal _i} := \{N \in \C: \Ext ^{i} _{\C} (-,N) | _{\X} =0\} \mbox{ and } \X ^{\ortogonal} := \cap _{i >0} \X^{\ortogonal _i}.$$
Dually, we have the left orthogonal classes $^{\ortogonal _i} \X$ and $^{\ortogonal} \X$.

Given a class $\Y\subseteq \C$, we write $\X \ortogonal \Y$ whenever $\Ext ^{i} _{\C} (X, Y) =0$ for all $X \in \Y$,  $Y \in \Y$ and $i >0$.  

 \textit{Relative homological dimensions}. The \textbf{relative projective dimension of $M$, with respect to $\X$}, is defined as
$$\pd _{\X} (M) : = \min \{n \in \mathbb{N}: \Ext ^{j} _{\C} (M,-) | _{\X} =0 \mbox{ for all } j>n\}.$$
We set $\min \emptyset := \infty$. Dually, we denote by $\id _{\X} (M)$ the \textbf{relative injective dimension of $M$ with respect to $\X$}. Furthermore,  we set  
$$\pd _{\X} (\Y) := \sup\{ \pd _{\X} (Y): Y \in \Y\} \mbox{ and } \id _{\X} (\Y) := \sup \{\id _{\X} (Y): Y \in \Y\}. $$
If $\X = \C$, we just write $\pd (\Y)$ and $\id (\Y)$.

 \textit{Resolution and coresolution dimension}. The \textbf{$\X$-coresolution dimension of $M$} denoted $\coresdim _{\X} (M)$, is the smallest non-negative integer $n$ such that there is an exact sequence 
$$0 \to M \to X_0 \to X_1 \to \cdots \to X_n \to 0,$$ 
with $X_i \in \X$ for all $i \in \{0, \dots , n\}$. If such $n$ does not exist, we set $\coresdim _{\X} (M) := \infty$. Also, we denote by $\X^{\cogorro } _n$ the class of objects in $\C$ with $\X$-coresolution dimension at most $n$. The union $\X ^{\cogorro} := \cup _{n \geq 0} \X ^{\cogorro} _n$ is the class of objects in $\C$ with finite $\X$-coresolution dimension.
Dually, we have the $\X$-{\bf{resolution dimension}} $\resdim_\X\,(M)$ of $M,$  $\mathcal{X}^{\wedge} _n$  the class of objects in 
$\C$ having  $\mathcal{X}$-resolution dimension at most $n$, and the union $\X ^{\gorro} := \cup _{n \geq 0} \X ^{\gorro} _n$ is the class of objects in $\C$ with finite $\X$-resolution dimension. We set  
$$\coresdim_\X\,(\Y):=\mathrm{sup}\,\{\coresdim_\X\,(Y)\;
:\; Y\in\Y\},$$ and  $\resdim_\X\,(\Y)$ is defined dually.

 \textit{Proper resolutions and balance.}
Given a class $\X \subseteq  \C$ a  \textbf{left proper $\X$-resolution} of $M \in \C $ is a complex $\mathbf{X} (M):\cdots \to X^1 \to X ^0 \to M \to 0$  such that the complex $\Hom _{\C} (X,\mathbf{X})$ is acyclic for all $X \in \X$. A \textbf{right proper $\X$-coresolution} is defined dually. We recall that if $\X, \Y, \mathcal{B} , \mathcal{E}$ are classes of objects in $\C$, then we say that $\Hom _{\C} (-,-)$ is \textbf{right balanced} on $\X \times \Y$ by $\mathcal{B} \times \mathcal{E}$ if for any objects $X \in \X$ and $\Y \in \Y$ there exist complexes 
$$\cdots \to B_2 \to B_1 \to B_0 \to X \to 0$$ 
and
$$0 \to Y \to E^0 \to E^1 \to E^2 \to \cdots $$
such that $B_i \in \mathcal{B}$, $E^{i} \in \mathcal{E}$ for all $i \geq 0$ and such that $\Hom _{\C} (-, E)$ makes the first complex acyclic whenever $E \in \mathcal{E}$ and such that $\Hom _{\C} (B, -)$ makes the second complex acyclic whenever  $B \in \mathcal{B}$. In the case of the category $\Modu (R)$ of left $R$-modules over an associative ring $R$ the definition is easily modified to give the definition  of a left or a right balanced functor $- \otimes _R -$ (see \cite{EnJen92,EnJen95,EnJen00} for details).

The class $\X$ is called \textbf{precovering} if for each $M \in \C$ there is  a homomorphism $f : X \to M$ such that $\Hom _{\C} (Z, f) : \Hom _{\C} (Z, X) \to \Hom _{\C} (Z,M)$ is surjective for any $Z \in \X$. Dually is defined a \textbf{preenveloping} class.

 \textit{Relative Gorenstein objects}. Given a pair $(\X,\Y)$ of classes of objects in $\C$, an object $M \in \C$ is:
 \textbf{$(\X,\Y)$-Gorenstein projective} \cite[Definition 3.2]{BMS} if $M$ is a cycle of an exact complex $\mathbf{X}$ with $X_m \in \X$ for every $m \in \mathbb{Z}$, such that the complex $\Hom_{\C}(\mathbf{X},Y)$ is an exact complex for all $Y \in \Y$. The class of all $(\X, \Y)$-Gorenstein projective objects is denoted by $\GP _{(\X, \Y)}$.  

 The  \textbf{$(\X,\Y)$-Gorenstein  projective dimension of $M$} is defined by 
$$\Gpd_{(\X,\Y)}(M):=\resdim_{\GP_{(\X,\Y)}}(M),$$
and for  any class $\mathcal{Z}\subseteq\A,$   $\Gpd_{(\X,\Y)}(\mathcal{Z}):=\sup \{\Gpd_{(\X,\Y)}(Z)\;:\;Z\in\mathcal{Z}\}.$   

Dually, we have the notion of $(\X, \Y)$-Gorenstein injective objects and their dimensions.

Let $(\X, \omega) \subseteq \C ^2$. The class $\omega$ is \textbf{$\X$-injective} if $\id _{\X}(\omega) =0$. Is said that $\omega$ is a \textbf{relative cogenerator} in $\X$ if $\omega \subseteq \X$ and  for any $X \in \X$ there is an exact sequence $0 \to X \to W \to X' \to 0$, with $W \in \omega$ and $X' \in \X$.  Dually, we have the notions of \textbf{$\X$-projective} and \textbf{relative generator} in $\X$.
We recall another notion from \cite{BMS}.

\begin{defi}\cite[Definition 3.1]{BMS}
A pair $(\X, \Y) \subseteq \C$ is \textbf{GP-admissible} if for each $C \in \C$, there is an epimorphism $X \to C$, with $X \in \C$, and $(\X, \Y)$ satisfies the following conditions:
\begin{itemize}
\item[(a)] $\X$ and $\Y$ are closed under finite coproducts in $\C$, and $\X$ is closed under extensions;
\item[(b)]   $\omega := \X \cap \Y$ is a relative cogenerator in $\X$ and $\X \ortogonal \Y$.
\end{itemize}
\end{defi}

GI-admissible pairs are defined dually (see \cite[Definition 3.6]{BMS}).\\ 

A pair $(\X, \Y) \subseteq \C^{2} $ is a \textbf{cotorsion pair} if $\X^{\ortogonal _1} = \Y$ and $\X = {^{\ortogonal _1} \Y}$. This cotorsion pair is  \textbf{complete} if for any $C \in \C$, there are exact sequences $0 \to Y \to X \to C \to 0$ and $0 \to C \to Y' \to X' \to 0$ where $X, X' \in \X $ and $Y, Y' \in \Y$. Moreover, the cotorsion pair is \textbf{hereditary} if $\X \ortogonal \Y$. A triple $(\X,\Y,\Z) \subseteq \C ^{3}$ is called a \textbf{cotorsion triple} \cite{Chen} provided that both $(\X, \Y)$ and $(\Y, \Z)$ are cotorsion pairs; it is \textbf{complete} (resp. \textbf{hereditary}) provided that both of the two cotorsion  pairs are complete (resp. hereditary).\\
Now we turn our attention to the category $\Modu (R)$ of left $R$-modules, where $R$ is an associative ring. To refer to an element in $\Modu (R)$ we will say simply \textit{$R$-module}, while for a right $R$-module we will say \textit{$R^{\op}$-module} and we denote this last category by $\Modu (R^{\op})$. For short we will write $\Proj (R)$ and $\Inj (R)$ for the classes $\Proj (\Modu (R))$ and $\Inj (\Modu (R))$ respectively, and $\Flat (R)$ for the class of flat $R$-modules.

Recall that for a given $ R$-module $M$, its \textit{character module} is defined to be the $R ^{\op}$-module $M ^{+} := \Hom _{\mathbb{Z}} (M, \mathbb{Q/Z})$. 

\begin{defi}\cite[Definition 2.1]{HJ}
A \textbf{duality pair} over a ring $R$ is a pair $(\Le, \A)$, where $\Le$ is a class of $R$-modules and $\A$ is a class of $R^{\op}$-modules, satisfying the following conditions:
\begin{itemize}
\item[(1)] $M \in \Le$ if and only if $M ^{+} \in \A$.
\item[(2)] $\A$ is closed under direct summands and finite direct sums.
\end{itemize}
A duality pair $(\Le, \A)$ is called \textbf{perfect} if $\Le$ contains the $R$-module $_R R$ and is closed under coproducts and extensions.
\end{defi}
By the Lambek's Theorem, the more natural example of a duality pair is when we consider the class $\F (R)$ of all flat $R$-modules and the class $\I (R^{\op})$ of all injective $R^{\op}$-modules. We are interested when $(\Proj (R) , \Le)$ give us a GP-admissible pair and thus, it will be possible to obtain a similar result as the one in \cite[Lemma 4.6]{Wang20} (by using \cite[Proposition 3.16]{BMS}), among others but with less hypothesis.  The following result will be useful to see how to do this. 

\begin{pro}\cite[Proposition 2.3]{Gill19} \label{Perfect01}
If $(\Le, \A)$ is a perfect duality pair, then $\Le$ contains all projective $R$-modules and $\A$ contains all injective $R^{\op}$-modules. In fact $\Le$ is closed under direct limits and so contains all flat $R$-modules too.
 \end{pro}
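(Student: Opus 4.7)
The plan is to establish each of the three assertions --- $\Proj(R) \subseteq \Le$, $\Inj(R^{\op}) \subseteq \A$, and closure of $\Le$ under direct limits (whence $\Flat(R) \subseteq \Le$ by Lazard) --- by systematically playing the defining biconditional $M \in \Le \iff M^+ \in \A$ against the summand-closure of $\A$ and the perfectness hypotheses on $\Le$. A preliminary observation I would record is that $\Le$ itself is automatically closed under direct summands: if $M \oplus N \in \Le$ then $(M \oplus N)^+ \cong M^+ \oplus N^+ \in \A$, and summand-closure of $\A$ then forces $M^+ \in \A$ and so $M \in \Le$.

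For the projective modules, since $_R R \in \Le$ and $\Le$ is closed under coproducts, every free $R$-module lies in $\Le$, and the summand-closure just established yields $\Proj(R) \subseteq \Le$. For an injective $R^{\op}$-module $I$, I would use that $R^+ = \Hom_{\mathbb{Z}}(R, \mathbb{Q}/\mathbb{Z})$ is an injective cogenerator of $\Modu(R^{\op})$, so $I$ embeds as a split direct summand of some product $(R^+)^S$. The natural isomorphism $\bigl(\bigoplus_S R\bigr)^+ \cong \prod_S R^+$, together with $\bigoplus_S R \in \Le$, places $\prod_S R^+$ in $\A$; summand-closure of $\A$ then gives $I \in \A$.

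For direct-limit closure, I would invoke the classical fact that for any filtered system $(M_i)_i$ the canonical epimorphism $\bigoplus_i M_i \twoheadrightarrow \varinjlim M_i$ is a pure epimorphism, which translates through the exact functor $(-)^+$ into a split short exact sequence $0 \to (\varinjlim M_i)^+ \to \bigl(\bigoplus_i M_i\bigr)^+ \to K^+ \to 0$. Assuming $M_i \in \Le$ for all $i$, coproduct-closure puts $\bigoplus_i M_i$ in $\Le$ and hence its character in $\A$; then $(\varinjlim M_i)^+$ is a direct summand of this character and hence in $\A$ by summand-closure, so $\varinjlim M_i \in \Le$. Finally, $\Flat(R) \subseteq \Le$ follows from Lazard's theorem --- every flat module is a filtered colimit of finitely generated free modules, each of which lies in $\Le$. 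I expect the only step that is not pure bookkeeping with the character functor to be the purity of the colimit projection $\bigoplus_i M_i \twoheadrightarrow \varinjlim M_i$; once that is granted, everything reduces to the interplay between the biconditional $M \in \Le \iff M^+ \in \A$, summand-closure of $\A$, and coproduct-closure of $\Le$.
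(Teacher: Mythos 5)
Your proof is correct, and since the paper states this result only as a citation to Gillespie's Proposition 2.3 without reproducing an argument, your write-up is essentially the standard proof behind that citation: summand-closure of $\Le$ deduced from that of $\A$, free modules and split embeddings into products of $R^{+}$ (via $(\bigoplus_S R)^{+}\cong\prod_S R^{+}$) handling projectives and injectives, and the pure epimorphism $\bigoplus_i M_i\twoheadrightarrow\varinjlim M_i$, whose character sequence splits, combined with Lazard's theorem for flats. The one step you single out is indeed the only nontrivial input and it holds: any map from a finitely presented module to $\varinjlim M_i$ factors through some $M_j$ and hence lifts to the coproduct, so the canonical presentation of a filtered colimit is pure.
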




The following notion comes originally by Gillespie \cite{Gill19} but it was only stated there for commutative rings. Recently has been extended to non-commutative rings in \cite{Gill21}, but earlier in \cite[Definition 3.1 (4)]{Wang20}.
 
 \begin{defi}\cite[Definition 2.3]{Gill21}
 By a \textbf{symmetric} duality pair $\{\Le, \A \}$ we mean:
 \begin{itemize}
 \item[(1)] $\Le$ is a class of $R$-modules.
 \item[(2)] $\A$ is a class of $R^{\op}$-modules.
 \item[(3)] $(\Le, \A)$ is a duality pair over $R$ and $(\A, \Le)$ is a duality pair over $R^{\op}$.
 \end{itemize}
 \end{defi}
 
 An example of a symmetric duality pair is obtained when $R$ is a noetherian ring by taking $\{ \F (R), \Inj (R^{\op}) \}$ (see \cite[Example 3.7]{Wang20}).
 
  Another example is obtained by taking $\Le$ to be the class of all \textbf{level} $R$-modules and $\A$ to be the class of all \textbf{absolute clean} $R^{\op}$-modules \cite{BGH}. In the last section we will see in detail such classes of $R$-modules.  We denote by $\mathrm{AC} (R)$  the class  of all absolutely clean $R$-modules, and by  $\mathrm{Lev} (R)$ the class of all level $R$-modules.  J. Gillespie and A. Iacob  call \textbf{semi-perfect}  to a duality pair $(\Le, \A)$ if it has all the properties required to be a perfect duality pair \textit{except} that $\Le$ may not be closed under extensions \cite{Gill21}. With this in mind we have the following.

\begin{defi}\cite[Definition 2.5]{Gill21}
By a \textbf{semi-complete duality pair} $(\Le , \A)$ we mean that $\{ \Le , \A\}$ is a symmetric duality pair with $(\Le, \A)$ being a semi-perfect duality pair. If $(\Le, \A)$ is indeed perfect, then we call it \textbf{complete duality pair}.
\end{defi}

By \cite[Remark 2.6]{Gill21} we see that when $(\Le, \A)$ is a semi-complete duality pair  then $\Le$ contains all projective and flat  $R$-modules and $\A$ contains all injective $R^{\op}$-modules. 
\section{Modules of $(\Le, \A)$-Gorenstein type}\label{S3} 

Recently Gillespie considers in \cite{Gill19} classes of  Gorenstein projective, Gorenstein flat  $R$-modules and Gorenstein injective $R^{\op}$-modules, relative to a duality pair $(\Le, \A)$ as follows. Let $M$ be a $R$-module and $N$ be a $R^{\op}$-module.
\begin{itemize}
\item $M$ is \textbf{Gorenstein $(\Le, \A)$-projective} if $M = Z_0 (\mathbf{P})$ for some exact complex of  projective  $R$-modules $\mathbf{P}$ for which $\Hom _{R} (\mathbf{P}, L)$ is acyclic for all $L \in \Le$.
\item $M$ is \textbf{Gorenstein $(\Le, \A)$-flat} if $M = Z_0 (\mathbf{F})$ for some exact complex of flat $R$-modules for which $A \otimes _R \mathbf{F}$ is acyclic for all $A \in \A$.
\item $N$ is \textbf{Gorenstein $(\Le, \A)$-injective} if $N = Z_0 (\mathbf{I})$ for some exact complex of injective $R^{\op}$-modules $\mathbf{
I}$ for which $\Hom _{R} (A,\mathbf{I})$ is acyclic for all $A \in \A$.
\end{itemize}

We denote the previous $R$-modules classes by  $\GP _{(\Proj, \Le)}$, $\GF _{(\Flat, \A)}$, and the $R^{\op}$-module class $\GI _{(\A, \Inj (R^{\op}))}$  respectively.
It can be easily seen that when $(\Le, \A)$ is a perfect  or semi-complete duality pair, then $(\Proj (R) , \Le)$ is a GP-admissible pair in $\Modu (R)$, and $(\A, \Inj(R^{\op}))$ is GI-admissible pair in $\Modu (R^{\op})$. From this we can obtain a similar result that the one in \cite[Lemma 4.6]{Wang20} by applying the theory developed in \cite{BMS}, but for a  perfect  or semi-complete duality pair instead of a \textit{bi-complete} duality pair (see Definition \ref{Bi-complete}).

 Since $(\mathrm{Lev}(R), \mathrm{AC}(R^{\op}))$ is a complete duality pair \cite[Example 2.7]{Gill21} then the pair $(\Proj (R), \mathrm{Lev} (R))$ is GP-admissible  in $\Modu (R)$ and  $(\mathrm{AC}(R^{\op}), \Inj (R^{\op}))$ is GI-admissible in $\Modu (R^{\op})$. The  $R$-module classes  $\GP _{(\Proj, \mathrm{Lev} (R))}$ and $\GI_{(\mathrm{AC}(R), \Inj)}$ are called \textbf{Gorenstein AC-projective} and \textbf{Gorenstein AC-injective} \cite{Gill18}, respectively. Note that we can consider the classes $\mathrm{AC}(R)$ and $\mathrm{AC}(R^{\op})$ which are defined in $\Modu (R) $ and in $\Modu (R^{\op})$, respectively, we will see later how this is useful to us. There is another relationship with Gorenstein categories which is set out below.

\begin{rk}
From  \cite[Theorem 2.5]{Gill19} when $(\Le, \A)$ is a symmetric duality pair in $\Modu (R)$ and $R$ is a commutative ring, the  the class $\GP _{(\Proj, \Le)}$ of all Gorenstein $(\Le, \A)$-projectives is precisely  the subcategory $(\Proj, \Le, \A)$-Gorenstein presented in  \cite[Definition 2.1]{Yang15}.  Thus, some results in this paper  will be applicable to this Gorenstein categories.
\end{rk}

\begin{lem}
Consider $R$ a right coherent ring, an exact complex $I^{\bullet}$  of injective $R^{\op}$-modules and suppose that $\fd (A) < \infty$ for all $A \in \A \subseteq \Modu (R^{\op})$. Then the exact complex of flat $R$-modules  $I^{\bullet +}$ is $(\A \otimes _R -)$-acyclic. 
\end{lem}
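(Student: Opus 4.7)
The plan is to verify the statement in two stages. First I show that $I^{\bullet+}$ is an exact complex of flat $R$-modules. Exactness is immediate, since the character functor $(-)^+ = \Hom_{\mathbb{Z}}(-,\mathbb{Q}/\mathbb{Z})$ is exact ($\mathbb{Q}/\mathbb{Z}$ being $\mathbb{Z}$-injective). For flatness, each $I^n$ is injective and hence FP-injective as a right $R$-module; by Stenstr\"om's criterion applied to $R^{\op}$ (which is left coherent, since $R$ is right coherent), the character module $(I^n)^+$ is a flat left $R$-module.

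Next I fix an arbitrary $A \in \A$ and prove that $A \otimes_R I^{\bullet+}$ is acyclic by induction on $n := \fd(A) < \infty$. The base case $n=0$ is immediate: $A$ is flat, so $A \otimes_R -$ is exact and sends the acyclic complex $I^{\bullet+}$ to an acyclic one. For the inductive step, choose a short exact sequence $0 \to K \to F \to A \to 0$ in $\Modu(R^{\op})$ with $F$ flat (for instance, take $F$ a free cover of $A$); then $\fd(K) \le n-1$, so by the inductive hypothesis both $K \otimes_R I^{\bullet+}$ and $F \otimes_R I^{\bullet+}$ are acyclic. Since each term $I^{k+}$ is flat, tensoring with it is exact, and this upgrades the above short exact sequence to a short exact sequence of complexes
\[
0 \to K \otimes_R I^{\bullet+} \to F \otimes_R I^{\bullet+} \to A \otimes_R I^{\bullet+} \to 0.
\]
The associated long exact sequence in homology forces $A \otimes_R I^{\bullet+}$ to have vanishing homology in every degree, completing the induction.

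The only delicate point is invoking Stenstr\"om's flatness criterion with the correct side convention, which is exactly why the hypothesis requires \emph{right} coherence of $R$: it is the property that characters of injective (equivalently FP-injective) right $R$-modules are flat left $R$-modules. Once that ingredient is in hand, the remainder is a routine dimension-shift induction exploiting the exactness of $-\otimes_R F$ for any flat $F$; I do not foresee additional obstacles.
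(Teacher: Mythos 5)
Your proof is correct and follows essentially the same route as the paper: flatness of the terms of $I^{\bullet+}$ via right coherence, then induction on $\fd(A)$ using a short exact sequence $0 \to K \to F \to A \to 0$ with $F$ flat, tensoring it against the complex (exactness preserved because the terms $I^{k+}$ are flat), and concluding with the long exact sequence in homology. The only cosmetic slip is attributing the acyclicity of $F \otimes_R I^{\bullet+}$ to the inductive hypothesis rather than to the base case, which does not affect the argument.
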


\begin{proof}
Let us consider $I^{\bullet}$ the exact  complex of injective  $R^{\op}$-modules. Since  $R $ is right coherent, the exact complex $I^{\bullet +}$  consists of   flat $R$-modules. We will prove that if $N$ is an $R^{\op}$-module with $\fd (N) =m < \infty$, then $N\otimes _R I^{\bullet +}$ is acyclic. The case $m=0$ is clear. Let  $m>0$ be, then there is an exact sequence $0 \to F_1 \to F \to N \to 0$, with $F \in \Flat (R^{\op})$ and $\fd (F_1) \leq m-1$, thus we obtain the exact sequence 
$$0 \to F_1 \otimes I^{\bullet +}  \to F \otimes I^{\bullet +} \to N \otimes I^{\bullet +} \to 0$$
we know that $F \otimes I^{\bullet +}$ is acyclic and $F_1 \otimes I^{\bullet +}$ is acyclic by induction hypothesis. It follows that $N \otimes I^{\bullet +}$ is acyclic. Now let  $A \in \A$ with $ \fd (A) < \infty$. It is now clear that $A \otimes I^{\bullet +}$ is acyclic. 
\end{proof}

\begin{cor}
Consider $R$ a right coherent ring and suppose that $\fd (A) < \infty$ for all $A \in \A \subseteq \Modu (R^{\op})$, then for all $G \in \GI_{(\A, \Inj (R^{\op}))}$ it is satisfied that $G^{+} \in \GF _{(\Flat , \A)}$.
\end{cor}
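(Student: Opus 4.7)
The plan is to construct, directly from a defining complex for $G$, a complex of flat $R$-modules witnessing that $G^{+}\in\GF_{(\Flat,\A)}$, and then to invoke the previous lemma to take care of the acyclicity condition under $A\otimes_R-$.

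First, by definition of $G\in\GI_{(\A,\Inj(R^{\op}))}$, pick an exact complex
$$\mathbf{I}:\ \cdots\to I^{-1}\to I^{0}\to I^{1}\to\cdots$$
of injective $R^{\op}$-modules with $G=Z_0(\mathbf{I})$. Applying the exact functor $(-)^{+}=\Hom_{\mathbb{Z}}(-,\mathbb{Q}/\mathbb{Z})$ termwise produces an exact complex
$$\mathbf{I}^{+}:\ \cdots\to (I^{1})^{+}\to (I^{0})^{+}\to (I^{-1})^{+}\to\cdots $$
Since $R$ is right coherent, the character module of any injective $R^{\op}$-module is a flat $R$-module (this is the usual Lambek-type fact), so $\mathbf{I}^{+}$ is an exact complex of flat $R$-modules.

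The next step is to identify $G^{+}$ as a cycle of $\mathbf{I}^{+}$. Split $\mathbf{I}$ at $G$ into the two exact sequences $\cdots\to I^{-2}\to I^{-1}\twoheadrightarrow G\to 0$ and $0\to G\hookrightarrow I^{0}\to I^{1}\to\cdots$; applying $(-)^{+}$ reverses the arrows and produces two exact sequences ending and starting at $G^{+}$, whose splicing is exactly the appropriate truncation of $\mathbf{I}^{+}$ at $G^{+}$. Hence $G^{+}=Z_0(\mathbf{I}^{+})$.

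Finally, the previous lemma applies verbatim to $\mathbf{I}^{+}$: under the standing hypotheses that $R$ is right coherent and $\fd(A)<\infty$ for every $A\in\A$, it tells us that $\mathbf{I}^{+}$ is $(\A\otimes_R-)$-acyclic. This is precisely the remaining condition needed to conclude $G^{+}\in\GF_{(\Flat,\A)}$. I don't anticipate a real obstacle here; the only detail that needs a small argument is the splicing identification $G^{+}=Z_0(\mathbf{I}^{+})$, and notice that the extra hypothesis on $G$ that $\Hom_R(A,\mathbf{I})$ be acyclic is not actually used in the argument, since the acyclicity of $A\otimes_R\mathbf{I}^{+}$ is obtained from the lemma using only that the terms of $\mathbf{I}$ are injective $R^{\op}$-modules.
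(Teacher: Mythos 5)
Your argument is correct and is exactly how the paper intends the corollary to follow: it is stated without proof as an immediate consequence of the preceding lemma, namely dualize a defining complex $\mathbf{I}$ of injectives for $G$, use right coherence to get flat terms in $\mathbf{I}^{+}$, identify $G^{+}=Z_0(\mathbf{I}^{+})$, and apply the lemma for the $(\A\otimes_R-)$-acyclicity. Your side remark that the $\Hom_R(\A,\mathbf{I})$-acyclicity of the original complex is never used is also accurate.
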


The following result will be very useful, as it allows us to use the Auslander-Buchweitz approximation theory.
\begin{pro} \label{cogenera}
Let $ \A \subseteq \Modu (R^{\op})$ be a class such that $\Inj (R^{\op}) \subseteq \A$, with $R$ a right coherent ring and suppose that $\GF_{(\Flat, \A)}$ is closed under extensions. Then the intersection $\Flat (R) \cap \Flat (R)^{\ortogonal }$ is a  $\GF_{(\Flat, \A)} $-injective relative cogenerator for $\GF_{(\Flat, \A)}$.

\end{pro}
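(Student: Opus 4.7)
The plan is to verify three conditions: (i) $\Flat(R) \cap \Flat(R)^{\ortogonal} \subseteq \GF_{(\Flat, \A)}$; (ii) every $G \in \GF_{(\Flat, \A)}$ fits in a short exact sequence $0 \to G \to W \to G'' \to 0$ with $W \in \Flat(R) \cap \Flat(R)^{\ortogonal}$ and $G'' \in \GF_{(\Flat, \A)}$; and (iii) $\Ext^i_R(G, W) = 0$ for every $i \geq 1$, $G \in \GF_{(\Flat, \A)}$, and $W \in \Flat(R) \cap \Flat(R)^{\ortogonal}$. The bulk of the content lies in (iii).

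For (i), any flat $F$ lies in $\GF_{(\Flat,\A)}$ via the trivially exact complex $\cdots \to 0 \to F \xrightarrow{\id} F \to 0 \to \cdots$, which remains exact after $A \otimes_R -$ for every $A \in \A$. For (ii), the defining complex $\mathbf{F}$ of $G$ supplies a short exact sequence $0 \to G \to F \to G' \to 0$ with $F$ flat and $G' \in \GF_{(\Flat,\A)}$. The flat cotorsion pair $(\Flat(R), \Flat(R)^{\ortogonal})$ is complete by the Bican--El Bashir--Enochs theorem, so there is a short exact sequence $0 \to F \to W \to F'' \to 0$ with $W \in \Flat(R)^{\ortogonal}$ and $F'' \in \Flat(R)$. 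Since $\Flat(R)$ is closed under extensions, $W \in \Flat(R) \cap \Flat(R)^{\ortogonal}$. The composite $G \subseteq F \subseteq W$ gives $0 \to G \to W \to W/G \to 0$, and writing $W/G$ as an extension of $W/F = F''$ by $F/G = G'$ produces $0 \to G' \to W/G \to F'' \to 0$. Since $G', F'' \in \GF_{(\Flat,\A)}$ (the latter by (i)) and $\GF_{(\Flat,\A)}$ is closed under extensions by hypothesis, $W/G \in \GF_{(\Flat,\A)}$.

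The heart of the proof is (iii). The key step is to show $\Hom_R(\mathbf{F}, W)$ is exact for any defining complex $\mathbf{F}$ of $G$ and any $W \in \Flat(R) \cap \Flat(R)^{\ortogonal}$. Since $W$ is flat, $W^+$ is an injective $R^{\op}$-module, hence $W^+ \in \A$ by the hypothesis $\Inj(R^{\op}) \subseteq \A$, so $W^+ \otimes_R \mathbf{F}$ is acyclic. The tensor--hom adjunction $(W^+ \otimes_R \mathbf{F})^+ \cong \Hom_R(\mathbf{F}, W^{++})$ then gives that $\Hom_R(\mathbf{F}, W^{++})$ is exact. To transfer exactness from $W^{++}$ to $W$, I use that $R$ is right coherent: by Chase's theorem $W^{++}$ is flat, and since $W \hookrightarrow W^{++}$ is the canonical pure monomorphism, the quotient $W^{++}/W$ is also flat. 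The cotorsion condition $W \in \Flat(R)^{\ortogonal}$ then forces $\Ext^1_R(W^{++}/W, W) = 0$, so $0 \to W \to W^{++} \to W^{++}/W \to 0$ splits. Consequently $\Hom_R(\mathbf{F}, W)$ is a direct summand of the exact complex $\Hom_R(\mathbf{F}, W^{++})$ and is itself exact. From $\mathbf{F}$ one extracts a short exact sequence $0 \to Z_1 \to F_1 \to G \to 0$ with $Z_1 \in \GF_{(\Flat,\A)}$, and exactness of $\Hom_R(\mathbf{F}, W)$ at the appropriate position shows that every morphism $Z_1 \to W$ extends to $F_1$, i.e.\ $\Hom_R(F_1, W) \twoheadrightarrow \Hom_R(Z_1, W)$. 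Combined with $\Ext^1_R(F_1, W) = 0$, this yields $\Ext^1_R(G, W) = 0$; dimension-shifting along the same sequence together with $Z_1 \in \GF_{(\Flat,\A)}$ then propagates to $\Ext^i_R(G, W) = 0$ for all $i \geq 1$ by induction on $i$.

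The main obstacle is the transition from exactness of $\Hom_R(\mathbf{F}, W^{++})$ to that of $\Hom_R(\mathbf{F}, W)$. Here the right coherence of $R$ and the cotorsion property of $W$ combine essentially: coherence upgrades $W^{++}$ to a flat module so that purity of $W \hookrightarrow W^{++}$ makes $W^{++}/W$ flat, and cotorsion of $W$ then supplies the splitting that transfers exactness from the double character module down to $W$ itself.
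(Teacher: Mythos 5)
Your proof is correct and follows essentially the same route as the paper: the relative cogenerator part uses the special $\Flat(R)^{\ortogonal}$-preenvelope of the flat module $F$ together with closure of $\GF_{(\Flat,\A)}$ under extensions (the paper does this via an explicit pushout), and the $\GF_{(\Flat,\A)}$-injectivity part rests on the identical key trick: the pure exact sequence $0 \to W \to W^{++} \to W^{++}/W \to 0$ splits because right coherence makes $W^{++}$ (hence the pure quotient) flat while $W$ is cotorsion. The only minor difference is at the last step, where the paper computes $\Ext^i_R(T,W^{++}) \cong \Hom_{\mathbb{Z}}(\Tor^R_i(W^+,T),\mathbb{Q}/\mathbb{Z})$ and cites the vanishing $\Tor^R_i(\A,\GF_{(\Flat,\A)})=0$, whereas you re-derive the $\Ext$-vanishing self-containedly from acyclicity of $W^+\otimes_R\mathbf{F}$ by dualizing, splitting off $\Hom_R(\mathbf{F},W)$, and dimension shifting.
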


\begin{proof}
Consider $M \in \GF _{(\Flat, \A)}$. We know that there is an exact complex by flat $R$-modules 
$$\cdots \to L_1 \to L_0 \to L^0 \to L^1 \to \cdots $$
which is  $(-\otimes _R \A)$-acyclic and with $\Ker(L^0 \to L^1) = M$.  Lets take the exact sequence  $0 \to M \to L^0 \to N \to 0$. Since $(\Flat (R), \Flat (R)^{\ortogonal })$ is a complete (and hereditary) cotorsion pair and $\Flat (R)$ is closed under extensions, for $ L ^{0}$ there is an exact sequence $0 \to L^0 \to K \to L \to 0$ with $K \in \Flat (R) \cap \Flat (R) ^{\ortogonal}$ and $L \in \Flat  (R) $. Consider the following p.o. diagram

 $$\xymatrix{
   M\; \ar@{=}[d] \ar@{^{(}->}[r]  & L_{0_{}} \ar@{^{(}->}[d]  \ar@{>>}[r] & N _{} \ar@{^{(}->}[d]    \\
  M\;  \ar@{^{(}->}[r]  & K  \ar@{>>}[d] \ar@{>>}[r] & M'  \ar@{>>}[d] \\
       & L \ar@{=}[r]&  L   }$$
since $N\in \GF _{(\Flat, \A)}$,  $L \in\Flat  (R)$ and $\Flat (R)$ is closed under extensions, by using \cite[Lemma 2.11]{Wang19} we have that $M ' \in \GF _{(\Flat, \A)}$, thus the exact sequence $0 \to M \to K \to M' \to 0 $ is such that $K \in \Flat (R) \cap \Flat (R) ^{\ortogonal}$ and $M ' \in \GF _{(\Flat, \A)}$. 
We still have to prove that $\GF _{(\Flat, \A)} \ortogonal (\Flat (R) \cap \Flat (R) ^{\ortogonal})$, to do this let's choose $X \in \Flat (R) \cap \Flat (R) ^{\ortogonal}$, we always have the pure exact sequence $\xi : 0 \to X \to X ^{++} \to X^{++ }/ X \to 0$, where $X \in \Flat(R)$ and  $X ^{++} \in \Flat (R)$ ($R$ is right coherent). Since $\Flat (R)$ is closed by pure quotients we obtain that $X ^{++} / X \in \Flat (R)$, so that the exact sequence $\xi$ splits, and thus for $T \in \GF _{(\Flat, \A)}$ we have  
\begin{eqnarray*}
\Ext ^{i} _R(T, X) \oplus \Ext ^{i}_R (T, X^{++} / X) & \cong & \Ext _R ^{i} (T,X^{++})\\
& = &  \Ext_R ^{i} (T, \Hom _{\mathbb{Z} }(X^{+} , \mathbb{Q} / \mathbb{Z})) \\
 & \cong & \Hom _{\mathbb{Z} } (\mathrm{Tor} ^{R} _i (X^{+}, T),\mathbb{Q} / \mathbb{Z} ) =0
\end{eqnarray*}
the last term is zero, since $\A \top \GF _{(\Flat, \A)}$ by \cite[Lemma 2.3]{Estrada18}.
\end{proof}

  From the hypothesis of  the previous results we see that it is important to know when the class $\GF _{(\Flat, \A)}$ is closed under extensions, this has been studied recently.  From \cite[Proposition 7]{Alina20} we know that this occurs when the class $\A$ is \textit{semi-definable} and $\Inj (R^{\op}) \subseteq \A$. Also from \cite[Corollary 5.3]{Gill21} when $(\Le, \A)$ is a semi-complete duality pair then $(\GF _{(\Flat, \A)} (R), \GF _{(\Flat, \A)} (R) ^{\ortogonal _1})$ is a perfect cotorsion pair and thus, $\GF _{(\Flat, \A)} (R)$ is closed under extensions.  
  
  The  \textbf{$(\Flat(R),\A)$-Gorenstein  flat dimension of $M\in \Modu (R)$} is defined by 
$$\Gfd_{(\Flat ,\A)}(M):=\resdim_{\GF_{(\Flat ,\A)}}(M),$$
and for  any class $\mathcal{Z}\subseteq\A,$   $\Gfd_{(\Flat,\A)}(\mathcal{Z}):=\sup \{\Gfd_{(\Flat,\A)}(Z)\;:\;Z\in\mathcal{Z}\}.$   

\begin{teo}\label{AB4}
Let $  \A \subseteq \Modu (R^{\op})$ be,  and suppose that $\GF _{(\Flat, \A)}$ is closed under extensions.
Then  for all $C\in \Modu (R)$ with $\Gfd_{(\Flat, \A)} (C)=n<\infty$,   the following statements are true.
\begin{itemize}
\item[(a)] There are exact sequences in  $\Modu (R)$
\begin{center}
$0\to K\to X\stackrel{\varphi}\to C\to 0,$
\end{center}
with $\resdim_{\Flat} (K)= n-1$ and $X\in \GF _{(\Flat, \A)},$ and
\begin{center}
$0\to C\stackrel{\varphi'}\to H\to X'\to 0,$
\end{center}
with $\resdim_{\Flat}(H)= n$ and $X'\in \GF _{(\Flat, \A)}.$
\item[(b)] If $R$ is a  right coherent ring  and $\Inj (R^{\op}) \subseteq \A$, then
  \begin{itemize}
  \item[(i)] $\varphi:X\to C$ is an $\GF _{(\Flat, \A)}$-precover and $K\in \GF _{(\Flat, \A)}^{\perp},$
  \item[(ii)] $\varphi':C\to H$ is an $(\Flat \cap \Flat ^{\ortogonal })^{\wedge}$-preenvelope and $X'\in{}^\perp((\Flat \cap \Flat ^{\ortogonal })^{\wedge}).$
  \end{itemize}
  \end{itemize}
\end{teo}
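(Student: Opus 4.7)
The plan is to cast the theorem as a direct instance of the Auslander--Buchweitz approximation machinery applied to the class $\X := \GF_{(\Flat, \A)}$ with relative cogenerator $\omega := \Flat(R) \cap \Flat(R)^{\ortogonal}$. Proposition \ref{cogenera} already furnishes the crucial inputs, namely that $\omega$ is a $\X$-injective relative cogenerator in $\X$ and that $\X \ortogonal \omega$. Combined with the standing hypothesis that $\X$ is closed under extensions and the fact that $\Flat(R) \subseteq \X$ (immediate from the definition of $\GF_{(\Flat, \A)}$, by splicing a flat resolution with an iterated flat-cotorsion coresolution), we are in precisely the situation covered by the Auslander--Buchweitz theorem as formulated in \cite{BMS}.

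For part $\ai$, I would invoke that Auslander--Buchweitz theorem to produce the two exact sequences with $K$ and $H$ belonging to $\omega^{\gorro}$ and with $\omega$-resolution dimensions at most $n-1$ and $n$ respectively. Since $\omega \subseteq \Flat(R)$, every $\omega$-resolution is already a flat resolution, so $\resdim_{\Flat}(K) \leq n-1$ and $\resdim_{\Flat}(H) \leq n$. The reverse inequalities follow from the minimality of $\Gfd_{(\Flat, \A)}(C) = n$: if $\resdim_{\Flat}(K) \leq n-2$, then splicing a flat resolution of $K$ onto $0 \to K \to X \to C \to 0$ (using $\Flat \subseteq \X$) exhibits an $\X$-resolution of $C$ of length at most $n-1$, contradicting $\Gfd_{(\Flat, \A)}(C) = n$; the case of $H$ is handled symmetrically from $0 \to C \to H \to X' \to 0$.

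For part $\bi$, the orthogonality relations $K \in \X^{\ortogonal}$ and $X' \in {}^{\ortogonal}(\omega^{\gorro})$ follow from $\X \ortogonal \omega$ by standard dimension shifting along the finite $\omega$-resolution of $K$ and along the $\omega$-coresolution witnessing $H \in \omega^{\gorro}$. The precover statement for $\varphi: X \to C$ is then immediate: for any $M \in \X$, applying $\Hom_R(M, -)$ to $0 \to K \to X \to C \to 0$ and using $\Ext^{1}_R(M, K) = 0$ yields the required surjectivity of $\Hom_R(M, \varphi)$. The preenvelope statement for $\varphi': C \to H$ is dual: apply $\Hom_R(-, W)$ to $0 \to C \to H \to X' \to 0$ with $W \in \omega^{\gorro}$ and use $\Ext^{1}_R(X', W) = 0$.

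The main obstacle I anticipate is the exact-equality (rather than mere inequality) claim for the flat resolution dimensions of $K$ and $H$, which is precisely where one must leverage the minimality of $n$ together with $\Flat \subseteq \X$; a secondary concern is ensuring that Proposition \ref{cogenera}'s standing hypotheses (right coherence of $R$ and $\Inj(R^{\op}) \subseteq \A$) are available throughout, since they are stated only in part $\bi$ but appear implicitly necessary to set up the cogenerator $\omega$ used in the construction of part $\ai$.
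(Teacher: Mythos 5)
Your overall strategy---casting the theorem as an instance of the Auslander--Buchweitz theorem of \cite{BMS} (Theorem 2.8 there) for the class $\X=\GF_{(\Flat,\A)}$---is exactly the paper's route, and your treatment of part \bi{} (precover/preenvelope via $\X\ortogonal\omega$ and dimension shifting along finite $\omega$-(co)resolutions) is sound. The genuine gap is in part \ai: you run the approximation machinery with $\omega:=\Flat(R)\cap\Flat(R)^{\ortogonal}$, whose status as a $\GF_{(\Flat,\A)}$-injective relative cogenerator rests on Proposition \ref{cogenera}, and that proposition needs $R$ right coherent and $\Inj(R^{\op})\subseteq\A$. But part \ai{} of the theorem assumes only that $\GF_{(\Flat,\A)}$ is closed under extensions; you noticed this tension yourself, but your conclusion that the extra hypotheses are ``implicitly necessary'' for \ai{} is the wrong resolution, so as written your construction of the two exact sequences in \ai{} is not proved under the stated hypotheses.

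The fix, which is what the paper does, is to apply the Auslander--Buchweitz theorem in part \ai{} to the pair $(\GF_{(\Flat,\A)},\Flat(R))$ rather than $(\GF_{(\Flat,\A)},\omega)$: the class $\Flat(R)$ is automatically a relative cogenerator in $\GF_{(\Flat,\A)}$, since the totally acyclic flat complex defining any $G\in\GF_{(\Flat,\A)}$ yields $0\to G\to F\to G'\to 0$ with $F$ flat and $G'\in\GF_{(\Flat,\A)}$; no coherence and no condition on $\A$ is needed, and \cite[Theorem 2.8(a)]{BMS} does not require the cogenerator to be $\X$-injective. This directly produces the sequences with $\resdim_{\Flat}(K)=n-1$ and $\resdim_{\Flat}(H)\leq n$ (equality for $H$ then follows, as in the paper, because $\Flat(R)$ is resolving---your minimality argument for the equalities is fine, though for $H$ it tacitly uses that $\GF_{(\Flat,\A)}$ is resolving, which should be said). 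The cogenerator $\omega=\Flat(R)\cap\Flat(R)^{\ortogonal}$ and Proposition \ref{cogenera} should enter only in part \bi, where the coherence and $\Inj(R^{\op})\subseteq\A$ hypotheses are indeed available and where the $\X$-injectivity of $\omega$ is what yields $K\in\GF_{(\Flat,\A)}^{\ortogonal}$ and the $(\Flat\cap\Flat^{\ortogonal})^{\gorro}$-preenvelope statement, exactly as in \cite[Theorem 2.8(b)]{BMS}.
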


\begin{proof}
The item  (a) it follows from \cite[Thm 2.8 (a)]{BMS} while the equality $\mathrm{resdim} (H) = n$  it holds since $\Flat (R)$ is a resolving class.  

The item (b) it follows from Proposition \ref{cogenera}  and  \cite[Thm 2.8 (b)]{BMS}.
\end{proof}

The following result makes use of the fact that the flat dimension  $\fd (-)$ coincides with  $\resdim _{\Flat} (-)$ \cite[Proposition 8.17]{Rotman}.
\begin{pro}
Let $\A \subseteq \Modu (R^{\op}) $ be,  and suppose that  $\GF _{(\Flat, \A)}$ is closed under extensions. Then for all  $M \in \A ^{+} $ the equality $\mathrm{fd} (M) = \Gfd_{(\Flat, \A)} (M)$ is given.
\end{pro}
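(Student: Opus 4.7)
The plan is to prove the two inequalities $\Gfd_{(\Flat,\A)}(M)\le\fd(M)$ and $\fd(M)\le\Gfd_{(\Flat,\A)}(M)$ separately. The first one holds for every $R$-module (no need to restrict to $\A^{+}$) once I check that $\Flat(R)\subseteq\GF_{(\Flat,\A)}$: for a flat $F$, the complex $\cdots\to0\to F\xrightarrow{\id}F\to0\to\cdots$ is exact, consists of flat modules, realizes $F$ as a cycle, and remains exact after tensoring with any $A\in\A$. Hence $\Gfd_{(\Flat,\A)}(M)\le\resdim_{\Flat}(M)=\fd(M)$, where the last equality is \cite[Proposition 8.17]{Rotman}.

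For the non-trivial inequality I may assume $n:=\Gfd_{(\Flat,\A)}(M)<\infty$, since otherwise the conclusion is vacuous. By Theorem \ref{AB4}(a) (whose hypothesis is exactly ``$\GF_{(\Flat,\A)}$ closed under extensions''), there exists a short exact sequence
$$0\to M\to H\to X'\to 0$$
with $\resdim_{\Flat}(H)\le n$ and $X'\in\GF_{(\Flat,\A)}$. The heart of the argument is to show that this sequence splits, which would give $M$ as a direct summand of $H$ and therefore $\fd(M)\le\fd(H)\le n$, closing the proof together with the first step.

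To produce the splitting I exploit the hypothesis $M\in\A^{+}$: write $M=A^{+}$ for some $A\in\A$. The adjunction $(A\otimes_{R}-)^{+}\cong\Hom_{R}(-,A^{+})$, together with the fact that $\mathbb{Q}/\mathbb{Z}$ is an injective cogenerator, yields the natural isomorphism
$$\Ext_{R}^{1}(X',A^{+})\cong\Tor_{1}^{R}(A,X')^{+}.$$
Now $X'\in\GF_{(\Flat,\A)}$ means there is an exact complex $\mathbf{F}$ of flat $R$-modules with $X'=Z_{0}(\mathbf{F})$ such that $A\otimes_{R}\mathbf{F}$ is acyclic for every $A\in\A$; truncating to a flat resolution of $X'$ and reading off the homology immediately gives $\Tor_{i}^{R}(A,X')=0$ for all $i\ge 1$ and all $A\in\A$. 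Therefore $\Ext_{R}^{1}(X',M)=0$ and the sequence splits, as required.

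I do not expect any serious obstacle. The only point that deserves care is confirming that the Tor-orthogonality $\A\top\GF_{(\Flat,\A)}$ used above holds with no extra assumption on $R$ or on $\A$ (beyond what appears in the statement); this is directly encoded in the definition of $\GF_{(\Flat,\A)}$, so the proof really is a short combination of Theorem \ref{AB4}(a) with the character-module Ext/Tor duality.
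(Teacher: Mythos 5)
Your proposal is correct and follows essentially the same route as the paper: both take the exact sequence $0\to M\to H\to X'\to 0$ from Theorem \ref{AB4}(a) with $\fd(H)=\Gfd_{(\Flat,\A)}(M)$ and $X'\in\GF_{(\Flat,\A)}$, and split it using the character-module isomorphism $\Ext^1_R(X',A^{+})\cong\Tor_1^R(A,X')^{+}$ together with the Tor-orthogonality $\A\top\GF_{(\Flat,\A)}$. The only cosmetic difference is that you derive that Tor-vanishing directly from the definition of $\GF_{(\Flat,\A)}$ (and spell out $\Flat(R)\subseteq\GF_{(\Flat,\A)}$ for the easy inequality), whereas the paper cites it from the literature.
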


\begin{proof}
Let us suppose that   $\Gfd _{(\Flat, \A)}  (M) < \infty$,  by Theorem \ref{AB4} (a), there is an exact sequence $\eta : 0 \to M \to X \to G \to 0$, such that $\mathrm{fd} (X) = \Gfd_{(\Flat, \A)} (M)$ and $G \in \GF _{(\Flat, \A)}$. Since $M \in \A ^{+}$ then $M = A ^{+}$ for some $A \in \A$, and thus 
$$\Ext ^{i} _R (G, M) = \Ext ^{i} _R (G, A^{+}) \cong \Hom _{\mathbb{Z}} (\Tor _i ^{R} (A, G) , \mathbb{Q} / \mathbb{Z}).$$
Now  $\A \top \GF _{(\Flat, \A)}$  by \cite[Lemma 2.10]{Wang19}, thus the sequence  $\eta$ splits, therefore $$\mathrm{fd} (M) \leq \mathrm {fd} (X)  = \Gfd_{(\Flat, \A)} (M).$$

Finally the inequality $\mathrm{fd} (M) \geq \Gfd_{(\Flat, \A)} (M)$ is always true.
\end{proof}

\begin{pro} \label{PropositionF-I}
Consider a  class $ \A \subseteq \Modu (R^{\op})$. Then for all  $M \in \GF _{(\Flat, \A)}$, it holds that $M ^{+} \in \GI_{ (\A, \Inj)}$.
 \end{pro}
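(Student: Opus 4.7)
The plan is to take the natural candidate $\mathbf{I} := \mathbf{F}^{+}$, where $\mathbf{F}$ is an exact complex of flat $R$-modules witnessing that $M \in \GF_{(\Flat,\A)}$, and verify the three conditions required by the definition of $\GI_{(\A,\Inj)}$.

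First I would recall that the character functor $(-)^{+} = \Hom_{\mathbb{Z}}(-,\mathbb{Q}/\mathbb{Z})$ is exact (since $\mathbb{Q}/\mathbb{Z}$ is an injective cogenerator of $\mathbf{Ab}$) and, by Lambek's theorem, sends flat $R$-modules to injective $R^{\op}$-modules. So if $\mathbf{F}$ is any exact complex of flat $R$-modules, then $\mathbf{F}^{+}$ is an exact complex of injective $R^{\op}$-modules. Moreover, since $\mathbf{F}$ is exact we can write $M = Z_{0}(\mathbf{F}) = \Coker(F_{1}\to F_{0})$, and exactness of $(-)^{+}$ (contravariant) then gives
\[
M^{+} = \bigl(\Coker(F_{1}\to F_{0})\bigr)^{+} = \Ker\bigl((F_{0})^{+} \to (F_{1})^{+}\bigr),
\]
which is exactly a cycle of the complex $\mathbf{F}^{+}$ in the appropriate degree.

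Next I would check the acyclicity condition required of a $\GI_{(\A,\Inj)}$-module. For every $A \in \A$, the standard Hom-tensor adjunction gives a natural isomorphism of complexes
\[
\Hom_{R^{\op}}(A,\mathbf{F}^{+}) \cong (A \otimes_{R} \mathbf{F})^{+}.
\]
By the definition of $\GF_{(\Flat,\A)}$, the complex $A \otimes_{R} \mathbf{F}$ is acyclic; applying the exact functor $(-)^{+}$ preserves acyclicity, so $(A\otimes_{R}\mathbf{F})^{+}$ is acyclic and hence so is $\Hom_{R^{\op}}(A,\mathbf{F}^{+})$. Combining the three observations yields $M^{+} \in \GI_{(\A,\Inj)}$.

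There is essentially no obstacle: the argument is a direct dualization using the two classical facts $F\in\Flat(R)\Rightarrow F^{+}\in\Inj(R^{\op})$ and $\Hom_{R^{\op}}(A,F^{+})\cong (A\otimes_{R}F)^{+}$, together with exactness of $(-)^{+}$. The only mild bookkeeping concern is making sure the indexing/location of the distinguished cycle survives the contravariant duality, which is handled by writing $M$ as a cokernel rather than a kernel before applying $(-)^{+}$.
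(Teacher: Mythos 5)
Your proposal is correct and follows essentially the same route as the paper's proof: dualize the witnessing flat complex $\mathbf{F}$, use Lambek's theorem to see $\mathbf{F}^{+}$ is an exact complex of injective $R^{\op}$-modules, and apply the natural isomorphism $\Hom_{R^{\op}}(A,\mathbf{F}^{+})\cong (A\otimes_{R}\mathbf{F})^{+}$ together with exactness of $(-)^{+}$ to get the required $\Hom_{R^{\op}}(\A,-)$-acyclicity. Your extra remark about tracking the distinguished cycle through the contravariant duality is a detail the paper leaves implicit, but it is the same argument.
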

 \begin{proof}
 Take $F ^{\bullet} \in \Ch (\Flat (R)) $ an exact complex of flat $R$-modules such that the complex  $A \otimes _R F^{\bullet}$ is acyclic for all  $A \in \A$.  Since $(-) ^{+}$ is an exact functor and  by \cite[Theorem 2.1.10]{EnJen00} the following isomorphism is natural
 $$(A \otimes_R F^{\bullet}) ^{+}  \cong \Hom _R (A, (F^{\bullet} ) ^{+}), $$
 we have that the last term is acyclic if and only if $A \otimes_R F^{\bullet} $ is acyclic.  By Lambek's Theorem \cite[Proposition 3.54]{Rotman} the complex $(F^{\bullet}) ^{+}$ is of injective $R^{\op}$-modules and also is acyclic, thus is $\Hom _R (\A, - )$-acyclic.
 \end{proof}

\begin{pro} \label{DimensionCoincide}
Let $ \A\subseteq \Modu (R^{\op}) $ be a class such that the pair $(\A , \Inj (R^{\op}))$ is GI-admissible in $\Modu (R^{\op})$, then for all $M \in \Modu (R)$ with $\pd (M^{+}) < \infty$ it is satisfied that $\mathrm{fd} (M)  = \Gfd_{(\Flat, \A)} (M)$.  
\end{pro}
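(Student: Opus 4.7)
The plan is to translate everything to $\Modu(R^{\op})$ using the character-module functor $(-)^+$, for which we have the standard identity $\fd_R(N) = \id_{R^{\op}}(N^+)$. Since $\Flat(R) \subseteq \GF_{(\Flat,\A)}$ the trivial inequality $\Gfd_{(\Flat,\A)}(M) \leq \fd(M)$ holds, so the real task is to show $\fd(M) \leq n$ under the assumption $n := \Gfd_{(\Flat,\A)}(M) < \infty$, equivalently $\id_{R^{\op}}(M^+) \leq n$. Applying $(-)^+$ to a $\GF_{(\Flat,\A)}$-resolution of $M$ of length $n$ and invoking Proposition \ref{PropositionF-I} produces a $\GI_{(\A,\Inj(R^{\op}))}$-coresolution of $M^+$ of the same length, so $\Gid_{(\A,\Inj)}(M^+) \leq n$.

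Since $(\A,\Inj(R^{\op}))$ is GI-admissible, a horseshoe argument on the defining exact complexes shows that $\GI_{(\A,\Inj)}$ is closed under extensions. Dualizing the Auslander--Buchweitz argument behind Theorem \ref{AB4}(a) to the GI-admissible situation in $\Modu(R^{\op})$ then produces a short exact sequence
\[
0 \to X' \to H \to M^+ \to 0
\]
with $X' \in \GI_{(\A,\Inj)}$ and $\id_{R^{\op}}(H) \leq n$. My strategy is to prove that this sequence splits: if so, $M^+$ is a direct summand of $H$, whence $\id_{R^{\op}}(M^+) \leq \id_{R^{\op}}(H) \leq n$, and the identity recalled above delivers $\fd(M) \leq n$.

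The splitting follows from the following key lemma---the main obstacle---whose proof is where the hypothesis $\pd(M^+) < \infty$ enters: \emph{if $L \in \Modu(R^{\op})$ satisfies $\pd(L) < \infty$ and $G \in \GI_{(\A,\Inj)}$, then $\Ext^i_{R^{\op}}(L,G) = 0$ for every $i \geq 1$}. I would prove it by induction on $\pd(L)$. For the inductive step, from a presentation $0 \to K \to Q \to L \to 0$ with $Q$ projective and $\pd(K) < \pd(L)$, the higher $\Ext$ groups vanish by dimension shifting, while for $\Ext^1(L,G)$ one must extend any $f\colon K \to G$ along $K \hookrightarrow Q$. Since $G$ is a cycle of an exact complex of injectives, there is a surjection $\pi\colon I \twoheadrightarrow G$ from an injective $R^{\op}$-module $I$ with $\Ker(\pi)$ again in $\GI_{(\A,\Inj)}$; the inductive hypothesis gives $\Ext^1(K,\Ker(\pi)) = 0$, so $f$ lifts to $K \to I$, and the injectivity of $I$ extends this map to $Q \to I$, which on composition with $\pi$ produces the required extension. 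Specialising the lemma to $L = M^+$ and $G = X'$ gives $\Ext^1_{R^{\op}}(M^+,X') = 0$; applying $\Hom_{R^{\op}}(M^+,-)$ to the short exact sequence above then lifts the identity of $M^+$ to a section of $H \twoheadrightarrow M^+$, providing the split and completing the proof.
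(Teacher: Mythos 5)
Your proposal is correct, and its skeleton is the same as the paper's: both arguments pass to $\Modu(R^{\op})$ via $(-)^{+}$, use the identity $\fd(M)=\id(M^{+})$, deduce $\Gid_{(\A,\Inj)}(M^{+})\leq \Gfd_{(\Flat,\A)}(M)$ from Proposition \ref{PropositionF-I}, and note the trivial inequality $\Gfd_{(\Flat,\A)}(M)\leq \fd(M)$. Where you genuinely diverge is in the remaining step $\id(M^{+})\leq \Gid_{(\A,\Inj)}(M^{+})$: the paper simply cites the dual of a lemma from \cite{Becerril21} stating that $\id(N)=\Gid_{(\A,\Inj)}(N)$ whenever $\pd(N)<\infty$ and $(\A,\Inj(R^{\op}))$ is GI-admissible, whereas you reprove this inequality from scratch, via the dual Auslander--Buchweitz approximation $0\to X'\to H\to M^{+}\to 0$ (the dual of Theorem \ref{AB4}(a)) together with your key lemma that $\Ext^{i}_{R^{\op}}(L,G)=0$ for $i\geq 1$ when $\pd(L)<\infty$ and $G\in\GI_{(\A,\Inj)}$, and a splitting argument. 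Your lemma and its induction are correct (the syzygy $\Ker(\pi)$ of $G$ inside an injective is again in $\GI_{(\A,\Inj)}$ because the defining complex can be shifted), and the splitting then gives $\id(M^{+})\leq\id(H)\leq n$ as claimed. What your route buys is self-containedness and transparency about exactly where $\pd(M^{+})<\infty$ enters; what it costs is that you must justify, not merely gesture at, two facts you use: that $\GI_{(\A,\Inj)}$ is closed under extensions for a GI-admissible pair (this is the dual of the corresponding result in \cite{BMS}, and its proof is more delicate than a plain horseshoe argument), and that the dual of Theorem \ref{AB4}(a) applies, which it does because $\Inj(R^{\op})$ is a relative generator in $\GI_{(\A,\Inj)}$ and the dual of \cite[Thm 2.8(a)]{BMS} needs only closure under extensions plus that generator property. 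With those citations supplied, your argument is a complete and valid alternative to the paper's citation-based proof.
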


\begin{proof}
Since   $(\A, \Inj(R^{\op}))$ is  GI-admissible, by using the dual of \cite[Lemma 3.3]{Becerril21} we have in $\Modu (R^{\op})$ that
$$\id (M^{+}) = \Gid _{(\A, \Inj)} (M^{+}),$$ 
and is true that $\fd (M) = \id (M ^{+})$ (for every ring $R$) and it is also true   (we prove this below) that $\Gid _{(\A, \Inj)} (M^{+}) \leq \Gfd _{(\Flat, \A)} (M)$, thus we obtain the inequality
$$ \fd (M) \leq \Gfd_{(\Flat, \A)} (M) $$
but the opposite inequality always occurs, since $\Flat (R) \subseteq \GF _{(\Flat, \A)}$.

It remains to prove that $\Gid _{(\A, \Inj)} (M^{+}) \leq \Gfd _{(\Flat, \A)} (M)$ but it follows from  Proposition \ref{PropositionF-I} .
\end{proof}
 
A natural question is whether all $(\Le, \A)$-Gorenstein projective $R$-modules are $(\Le, \A)$-Gorenstein flat, for now we will prove this  for a symmetric duality pair $\{ \Le,  \A \}$. Thus,  the inequality $\Gfd _{(\Flat, \A)} (M) \leq \Gpd _{(\Proj , \Le)} (M)$ will be true for all $M \in \Modu (R)$.

\begin{pro} \label{ProyectivosPlanos}
Let  $( \Le , \A ) \subseteq \Modu (R) \times \Modu (R^{\op})$ be such that $\A ^{+} \subseteq \Le$. Then  the containment $\GP _{(\Proj , \Le)} \subseteq \GF _{(\Flat, \A)}$ is given. 
\end{pro}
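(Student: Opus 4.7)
The plan is the following. Take $M \in \GP_{(\Proj, \Le)}$, so by definition there exists an exact complex $\mathbf{P}$ of projective $R$-modules with $Z_0(\mathbf{P}) = M$ such that $\Hom_R(\mathbf{P}, L)$ is acyclic for every $L \in \Le$. Since every projective $R$-module is flat, $\mathbf{P}$ is already an exact complex of flat $R$-modules with $Z_0(\mathbf{P}) = M$, so the only thing that remains to be verified is that $A \otimes_R \mathbf{P}$ is acyclic for each $A \in \A$.

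The main tool is the standard adjunction isomorphism
\[
(A \otimes_R \mathbf{P})^{+} \;\cong\; \Hom_R(\mathbf{P}, A^{+}),
\]
which is natural in each variable and hence gives an isomorphism of complexes. Pick an arbitrary $A \in \A$. By hypothesis $\A^{+} \subseteq \Le$, so $A^{+} \in \Le$, and by the defining property of $\mathbf{P}$ the complex $\Hom_R(\mathbf{P}, A^{+})$ is acyclic. Through the displayed isomorphism this means $(A \otimes_R \mathbf{P})^{+}$ is acyclic.

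To conclude, one invokes the well known fact that $(-)^{+} = \Hom_{\mathbb{Z}}(-, \mathbb{Q}/\mathbb{Z})$ is a faithfully exact functor (Lambek's theorem, cf.\ \cite[Proposition 3.54]{Rotman}); in particular, a complex of abelian groups is acyclic if and only if its character complex is acyclic. Applied to our situation, the acyclicity of $(A \otimes_R \mathbf{P})^{+}$ forces the acyclicity of $A \otimes_R \mathbf{P}$. As $A \in \A$ was arbitrary, $\mathbf{P}$ witnesses $M \in \GF_{(\Flat, \A)}$, which is exactly what we wanted.

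The argument is essentially a one-step verification; there is no real obstacle beyond correctly identifying the adjunction and invoking faithful exactness of $(-)^{+}$. The only subtle point is that one needs the hypothesis $\A^{+} \subseteq \Le$ to transfer the $\Hom$-orthogonality defining $\GP_{(\Proj,\Le)}$ into the $\otimes$-orthogonality defining $\GF_{(\Flat,\A)}$; this is precisely why the statement is phrased with this containment rather than (for example) assuming that $(\Le,\A)$ be a duality pair, and it is what makes the proof essentially formal.
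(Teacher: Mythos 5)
Your argument is correct and is essentially the paper's own proof: both pass through the natural isomorphism $(A\otimes_R\mathbf{P})^{+}\cong\Hom_R(\mathbf{P},A^{+})$, use $\A^{+}\subseteq\Le$ to get acyclicity of the right-hand side, and conclude via the fact that a complex is acyclic iff its character complex is (you merely make explicit that projectives are flat). One small quibble: the faithful exactness of $(-)^{+}$ comes from $\mathbb{Q}/\mathbb{Z}$ being an injective cogenerator of abelian groups, not from Lambek's theorem (which is the flat--injective character statement); this does not affect the validity of the proof.
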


\begin{proof}
 Consider an exact complex $\mathbf{P}$ of projective $R$-modules  which is  $\Hom _R (-, \Le)$-acyclic, we will see below that for all $A \in \A$ it is satisfied that $A \otimes _R \mathbf{P}$ is acyclic.  By \cite[Theorem 2.1.10]{EnJen00} we have the following natural isomorphism 
 $$\Hom _{\mathbb{Z}} (A\otimes _R \mathbf{P}, \mathbb{Q} / \mathbb{Z} ) \cong \Hom _R (\mathbf{P}, A ^{+}).$$
 Since for all $A\in \A$ is satisfied that $ A ^+ \in \Le$, we have that  $\Hom _{R} (\mathbf{P}, A^+)$ is acyclic for all $A \in \A$. Therefore $A \otimes _R \mathbf{P}$ is acyclic for all $A \in \A$.
\end{proof}

\begin{lem}\label{DimensionesGF}
Let $ \A\subseteq \Modu (R^{\op}) $ be, and suppose that the class $\GF_{(\Flat, \A)}$  is closed under extensions. For all $M \in \GF ^{\gorro} _{(\Flat, \A)} $ the following statements are equivalent.
\begin{itemize}
\item[(i)] $\Gfd _{(\Flat, \A)} (M) \leq n$,
\item[(ii)] If $0 \to K_n\to H_{n-1}  \to \cdots \to H_{0} \to M \to 0$ is an exact sequence with  $H_{i} \in \GF _{(\Flat, \A)}$, then $K^n \in \GF _{(\Flat, \A)}$.
\end{itemize}
\end{lem}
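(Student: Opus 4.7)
The plan is to prove the two implications separately; (ii) $\Rightarrow$ (i) is direct while (i) $\Rightarrow$ (ii) proceeds by induction on $n$.

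For (ii) $\Rightarrow$ (i), I will exploit that $M \in \GF^{\gorro}_{(\Flat,\A)}$ already admits some finite $\GF_{(\Flat,\A)}$-resolution, of length $m$, say. If $m \leq n$ we are done; otherwise, truncating such a resolution at step $n$ produces an exact sequence $0 \to \Omega \to H_{n-1} \to \cdots \to H_0 \to M \to 0$ with all $H_i \in \GF_{(\Flat,\A)}$ and $\Omega$ the $n$-th syzygy. Hypothesis (ii) then forces $\Omega \in \GF_{(\Flat,\A)}$, giving $\Gfd_{(\Flat,\A)}(M) \leq n$.

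For (i) $\Rightarrow$ (ii) I would induct on $n$. The base $n=0$ is trivial: the sequence reduces to $K_0 = M$, and $\Gfd_{(\Flat,\A)}(M) \leq 0$ means $M \in \GF_{(\Flat,\A)}$. For the inductive step, pick a $\GF_{(\Flat,\A)}$-resolution $0 \to G_n \to G_{n-1} \to \cdots \to G_0 \to M \to 0$ realizing $\Gfd_{(\Flat,\A)}(M) \leq n$, and let $N := \Ker(G_0 \to M)$, so that $\Gfd_{(\Flat,\A)}(N) \leq n-1$. Given any other exact sequence $0 \to K_n \to H_{n-1} \to \cdots \to H_0 \to M \to 0$ with $H_i \in \GF_{(\Flat,\A)}$, set $N' := \Ker(H_0 \to M)$ and form the pullback $P$ of $G_0 \twoheadrightarrow M \twoheadleftarrow H_0$, giving rise to
$$0 \to N \to P \to H_0 \to 0 \qquad \text{and} \qquad 0 \to N' \to P \to G_0 \to 0.$$
From the first sequence, together with $H_0 \in \GF_{(\Flat,\A)}$ and an induction on $N$ to conclude $N \in \GF_{(\Flat,\A)}$ when $n-1 = 0$ (or to reduce its $\GF$-resolution dimension further otherwise), closure under extensions produces $P \in \GF_{(\Flat,\A)}$. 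The second sequence then identifies $N'$ as the kernel of an epimorphism between two objects of $\GF_{(\Flat,\A)}$, so $N' \in \GF_{(\Flat,\A)}$, and the truncated resolution $0 \to K_n \to H_{n-1} \to \cdots \to H_1 \to N' \to 0$ places $K_n$ in $\GF_{(\Flat,\A)}$ by another application of the inductive hypothesis to $N'$.

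The main obstacle will be the passage from $P, G_0 \in \GF_{(\Flat,\A)}$ to $N' \in \GF_{(\Flat,\A)}$, since closure of $\GF_{(\Flat,\A)}$ under kernels of epimorphisms between its own objects is not among the explicit hypotheses. I plan to extract this closure from the standing assumption together with the containment $\Flat(R) \subseteq \GF_{(\Flat,\A)}$, reproducing the idea already used in the proof of Proposition \ref{cogenera} via \cite[Lemma 2.11]{Wang19}. Alternatively, the whole statement follows as a direct consequence of the Auslander--Buchweitz type result \cite[Theorem 2.8]{BMS} applied to $\GF_{(\Flat,\A)}$, since the standing hypothesis of closure under extensions already verifies its requirements, thereby bypassing the need to run the pullback argument by hand.
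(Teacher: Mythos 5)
Your (ii)$\Rightarrow$(i) direction is fine, and your pullback strategy for (i)$\Rightarrow$(ii) could in principle be made to work, but as written the inductive step has a genuine gap. For $n>1$ the module $N=\Ker(G_0\to M)$ only satisfies $\Gfd_{(\Flat,\A)}(N)\le n-1$; it is not in $\GF_{(\Flat,\A)}$, so extension closure does not give $P\in\GF_{(\Flat,\A)}$, and hence the assertion that "$N'$ is the kernel of an epimorphism between two objects of $\GF_{(\Flat,\A)}$" is unjustified except when $n=1$. What the induction actually requires is the bound $\Gfd_{(\Flat,\A)}(N')\le n-1$, and to get it you need auxiliary facts you never establish: closure of $\GF_{(\Flat,\A)}$ under kernels of epimorphisms between its objects, and dimension-shifting along short exact sequences with one end in the class. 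You flag the kernel-of-epimorphism issue, but only as a plan; note that the argument that extension closure forces $\GF_{(\Flat,\A)}$ to be resolving (using $\Tor^R_{\ge 1}(\A,G)=0$ for $G\in\GF_{(\Flat,\A)}$ and the embeddings into flat modules coming from the defining complexes) is precisely what must be written out, and it is not contained in Proposition \ref{cogenera} or in the way \cite{Wang19} is used there.

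Your two shortcuts do not close this gap. \cite[Theorem 2.8]{BMS} is an Auslander--Buchweitz approximation theorem: it produces special exact sequences from a relative cogenerator (this is how it enters Theorem \ref{AB4}, whose part (b) moreover needs $R$ right coherent and $\Inj(R^{\op})\subseteq\A$ via Proposition \ref{cogenera}); it does not assert that the $n$-th syzygy of an \emph{arbitrary} $\GF_{(\Flat,\A)}$-resolution lies in $\GF_{(\Flat,\A)}$. The statement you need is the Auslander--Bridger lemma, which is exactly what the paper invokes, and in its standard form it also requires closure under direct summands --- a property you never address. The paper's proof is organized around supplying precisely these closure properties: $\GF_{(\Flat,\A)}$ is closed under coproducts (the tensor product commutes with coproducts), it is resolving by the hypothesis of extension closure, and then the Eilenberg swindle yields closure under direct summands, after which \cite[Lemma 3.12]{AuBri69} applies. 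So either complete your pullback induction honestly (prove kernel-of-epimorphism closure and the needed dimension-shifting lemmas --- this route would in fact bypass summand closure), or follow the paper's closure-properties-plus-Auslander--Bridger argument; as it stands, the proposal does neither.
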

\begin{proof}
Since the coproduct of flat $R$-modules is flat and we work in  $\Modu (R)$, then the coproduct of exact complexes with flat components is an exact complex of flat components, then as the tensor commutes with the coproducts, we have that the class $\GF_{(\Flat, \A)}$ is closed by coproducts. It follows from the hypotheses that  $\GF_{(\Flat, \A)}$ is a resolving class, then by the Eilenberg's Swindle we obtain that the class $\GF_{(\Flat, \A)}$ is closed under direct summands. Finally the result follows from the Auslander-Bridger's Lemma \cite[Lemma 3.12]{AuBri69}.
\end{proof}
We have tha the following Lemma, which is similar to  \cite[Lemma 2.5]{Bennis09}.

\begin{lem} \label{Bennis2.9}
Let  $ \A\subseteq \Modu (R^{\op}) $ be, such that $\Inj (R^{\op}) \subseteq \A$. The following statements are equivalent.
\begin{itemize}
\item[(1)] $\GF _{(\Flat, \A)}$ is closed under extensions,
\item[(2)] The class $\GF _{(\Flat, \A)}$ is pre-resolving,
\item[(3)] For all exact sequence of  $R$-modules $0 \to G_1 \to G_0 \to M \to 0$ with $G_1 , G_0 \in \GF _{(\Flat, \A)}$  if $\Tor ^R _1 (\A, M ) =0$, then $M \in \GF _{(\Flat, \A)}$.
\end{itemize}
\end{lem}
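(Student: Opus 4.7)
The plan is to establish the cyclic implications $(2) \Rightarrow (1) \Rightarrow (3) \Rightarrow (2)$. The first, $(2) \Rightarrow (1)$, is immediate, as every pre-resolving class is by definition closed under extensions.

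For $(1) \Rightarrow (3)$, I would start from the hypothesized short exact sequence $0 \to G_1 \to G_0 \to M \to 0$ and first propagate the Tor-vanishing: the long exact sequence of $\Tor$ together with $\A \top \GF_{(\Flat, \A)}$ (following \cite[Lemma 2.3]{Estrada18}) yields $\Tor_{i}^{R}(A, M) = 0$ for every $A \in \A$ and every $i \geq 1$. Since $G_1 \in \GF_{(\Flat, \A)}$, the right part of its complete flat resolution provides a short exact sequence $0 \to G_1 \to F \to G_1' \to 0$ with $F$ flat and $G_1' \in \GF_{(\Flat, \A)}$; pushing out along $G_1 \hookrightarrow G_0$ produces $0 \to F \to P \to M \to 0$ where $P$ fits into $0 \to G_0 \to P \to G_1' \to 0$, and hence $P \in \GF_{(\Flat, \A)}$ by hypothesis (1). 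From here, I would assemble a complete flat resolution of $M$: any flat resolution of $M$ is automatically $(-\otimes_{R}\A)$-acyclic by the Tor-vanishing, while a $(-\otimes_{R}\A)$-acyclic right flat coresolution is built iteratively from the right coresolution of $P$, by exploiting the inclusion $F \hookrightarrow P \hookrightarrow F^{0}$ (with $0 \to P \to F^{0} \to P^{1} \to 0$ the right envelope of $P$) to produce $0 \to M \to F^{0}/F \to P^{1} \to 0$ with $P^{1} \in \GF_{(\Flat, \A)}$, and invoking (1) at each stage to keep the intermediate objects inside $\GF_{(\Flat, \A)}$.

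For $(3) \Rightarrow (2)$, I would verify the remaining ingredients of pre-resolving: containment of $\Flat(R)$ is trivial, and closure under extensions and under kernels of epimorphisms between objects of $\GF_{(\Flat, \A)}$ follow from pullback/pushout constructions that reduce to an application of (3). For closure under extensions, given $0 \to A \to B \to C \to 0$ with $A, C \in \GF_{(\Flat, \A)}$, combining the right flat envelope of $A$ with the left flat envelope of $C$ through iterated pullback and pushout produces an auxiliary flat module (as an extension of two flats) fitting into a short exact sequence of the shape demanded by (3); applying (3) delivers an intermediate object in $\GF_{(\Flat, \A)}$ from which $B \in \GF_{(\Flat, \A)}$ is extracted by a further such application. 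Closure under kernels of epimorphisms is handled dually.

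The main technical obstacle is the right-coresolution step in $(1) \Rightarrow (3)$: since $M$ need not embed a priori into any flat module, the right flat coresolution must be manufactured by iterating the pushout with the right coresolution of $P$, and one has to verify that the non-flat quotients $F^{0}/F$ arising along the way continue to fit into the developing complete resolution in a $(-\otimes_{R}\A)$-acyclic manner. This is exactly the point where hypothesis (1) and the Tor-vanishing against $\A$ are jointly essential.
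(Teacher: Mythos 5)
Your overall architecture (cycle $(2)\Rightarrow(1)\Rightarrow(3)\Rightarrow(2)$, Tor-vanishing propagation, and the first pushout producing $0\to F\to P\to M\to 0$ with $P\in\GF_{(\Flat,\A)}$) matches the paper's treatment of the only implication it proves in detail, namely $(1)\Rightarrow(3)$; the paper delegates the remaining implications to Bennis, much as you do. However, there is a genuine gap exactly at the point you flag as "the main technical obstacle": you build the right half of the would-be complete resolution out of the quotients $F^{0}/F$, which you yourself describe as possibly non-flat, and you propose to keep them merely "$(-\otimes_R\A)$-acyclic." That cannot work as stated: by definition an element of $\GF_{(\Flat,\A)}$ must be a cycle of an exact complex whose \emph{every} component is flat, so a coresolution containing non-flat terms proves nothing, and no amount of Tor-vanishing repairs this. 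The missing idea is that $F^{0}/F$ is in fact flat, and proving this is the heart of the paper's argument: from $0\to M\to F^{0}/F\to P^{1}\to 0$ one gets $\Tor^R_1(A,F^{0}/F)=0$ for all $A\in\A$ (using $\Tor_1^R(\A,M)=0$ and $P^{1}\in\GF_{(\Flat,\A)}$); then dualizing $0\to F\to F^{0}\to F^{0}/F\to 0$ gives $0\to (F^{0}/F)^{+}\to (F^{0})^{+}\to F^{+}\to 0$ with $F^{+},(F^{0})^{+}$ injective, and since $F^{+}\in\Inj(R^{\op})\subseteq\A$ one has $\Ext^1_{R^{\op}}(F^{+},(F^{0}/F)^{+})\cong\bigl(\Tor^R_1(F^{+},F^{0}/F)\bigr)^{+}=0$, so the sequence splits, $(F^{0}/F)^{+}$ is injective, and Lambek's theorem forces $F^{0}/F$ to be flat. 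Only then does splicing $0\to M\to F^{0}/F\to P^{1}\to 0$ with a complete resolution of $P^{1}$ and an arbitrary flat resolution of $M$ (acyclic by the Tor-vanishing you established) yield $M\in\GF_{(\Flat,\A)}$.

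Note also that your sketch never uses the hypothesis $\Inj(R^{\op})\subseteq\A$, which appears in the statement precisely to make the flatness argument above go through; its absence from your outline is a symptom of the gap. Your $(3)\Rightarrow(2)$ paragraph is plausible in outline (and the paper likewise only points to Bennis's Lemma 2.5 for it), but as written it is too vague to check: the auxiliary object obtained by pulling back along the flat surjection onto $C$ is an extension of a flat module by $A\in\GF_{(\Flat,\A)}$, and you must explain how to certify that this object lies in $\GF_{(\Flat,\A)}$ using only condition $(3)$, not closure under extensions, before $(3)$ can be applied to conclude $B\in\GF_{(\Flat,\A)}$.
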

\begin{proof}
We will prove $(1) \Rightarrow (3)$. The remaining implications are similar to \cite[Lemma 2.5]{Bennis09}. Consider the exact sequence  $0 \to G_1 \to G_0 \to M \to 0$ with $G_1 , G_0 \in \GF _{(\Flat, \A)}$ and such that $\Tor ^R _1 (\A, M ) =0$. Since $ G_1 \in \GF _{(\Flat, \A)}$, then there is an exact sequence $0 \to G_1 \to F_1 \to H \to 0$, with $F_1 \in \Flat (R)$ and $ H \in \GF _{(\Flat, \A)}$. We have the following p.o. diagram
$$\xymatrix{ G_{1_{}}\ar@{^{(}->}[r] \ar@{^{(}->}[d] & G_{0_{}}   \ar@{^{(}->}[d] \ar@{>>}[r] & M  \ar@{=}[d] \\
  F_1  \ar@{>>}[d] \ar@{^{(}->}[r] & D  \ar@{>>}[d] \ar@{>>}[r]& M  \\
 H  \ar@{=}[r]   & H   &     }$$
from the sequence $0 \to G_0 \to D \to H \to 0$ we have that $G_0 , H \in \GF _{(\Flat, \A)}$ implies $D \in \GF _{(\Flat, \A)}$ (by hypothesis), thus there is an exact sequence $0 \to D \to F \to G \to 0$, with $F \in \Flat$ and $G \in \GF _{(\Flat, \A)}$. Consider the following p.o. diagram
$$\xymatrix{  F_1  \ar@{=}[d] \ar@{^{(}->}[r]  & D _{} \ar@{^{(}->}[d] \ar@{>>}[r] & M_{} \ar@{^{(}->}[d]    \\
  F_1  \ar@{^{(}->}[r]  & F  \ar@{>>}[d] \ar@{>>}[r] & F' \ar@{>>}[d] \\
       & G  \ar@{=}[r]&  G   }$$
We will show that $F' \in \Flat$. Consider the exact sequence $0 \to M \to F' \to G \to 0$. For $A \in \A$ we have the exact sequence
$$ 0 = \Tor ^R _1 (A,M) \to \Tor ^R _1 (A, F') \to \Tor^R  _1 (A,G) =0,$$
thus $\Tor^R  _1 (A, F') =0$. For another hand, we also have the exact sequence  $ 0 \to F_1 \to F \to F'\to 0$, and from this we obtain the exact sequence  $\beta : 0\to (F') ^{+} \to F^{+} \to (F_1) ^{+} \to 0 $ with $ (F_1) ^{+}, F^{+} \in \Inj (R^{\op})$. Now we have the following isomorphism 
$$\Ext _R ^1 ((F_1) ^{+}, (F' ) ^{+}) \cong (\Tor ^R _1 ((F_1) ^{+}, F') )^{+} =0, \mbox{ since } F_1 ^{+} \in  \Inj (R^{\op}) \subseteq \A$$ 
therefore the exact sequence $\beta$  splits, thus $(F') ^{+}$ is a direct summand of an injective $R^{\op}$-module, and from this   $(F') ^{+}$ is an injective $R^{\op}$-module, then by Lambek's Theorem $F'$ is a flat $R$-module. Finally  we have the exact sequence$0 \to M \to F' \to G \to 0$ with $G \in \GF _{(\Flat, \A)}$ and $F' \in \Flat$, and thus $M \in \GF _{(\Flat, \A)}$.
\end{proof}

The following result is a generalization of \cite[Theorem 2.8]{Bennis09}.

\begin{teo} \label{Teo2.8Bennis09}
Let  $ \A\subseteq \Modu (R^{\op}) $ be, and suppose that $\GF_{(\Flat, \A)}$ is closed under extensions. Consider the following statements for $M \in \Modu (R)$.
\begin{itemize}
\item[(1)] $\Gfd_{(\Flat, \A)} (M) \leq n$.
\item[(2)] $\Gfd _{(\Flat, \A)}(M) < \infty$ and $\Tor _{i} ^{R} (A, M) =0$ for all $i > n$ and all $ A \in \A \subseteq \Modu (R^{\op})$.
\item[(3)] $\Gfd_{(\Flat, \A)} (M) < \infty$ and $\Tor   _{i} ^{R} (E, M) =0$ for all $i > n$ and all $E  \in \A ^{\cogorro} \subseteq \Modu (R^{\op})$. 
\end{itemize}
Then $(1) \Rightarrow (2) \Rightarrow (3)$. Furthermore, if $\Inj (R^{\op}) \subseteq \A$ then all statements are equivalent.
\end{teo}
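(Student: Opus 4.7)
The plan splits along the three implications, with $(3) \Rightarrow (1)$ being where all the work lies.

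For $(1) \Rightarrow (2)$, first observe that every $G \in \GF_{(\Flat, \A)}$ satisfies $\Tor_i^R(A, G) = 0$ for all $i > 0$ and $A \in \A$: by definition $G = Z_0(\mathbf{F})$ for an exact complex $\mathbf{F}$ of flat $R$-modules with $A \otimes_R \mathbf{F}$ acyclic, and the left half of $\mathbf{F}$ is a flat resolution of $G$ whose $\A$-acyclicity yields the vanishing. Given a $\GF_{(\Flat, \A)}$-resolution $0 \to G_n \to \cdots \to G_0 \to M \to 0$ of length $n$, dimension shifting through the syzygy sequences then yields $\Tor_i^R(A, M) = 0$ for $i > n$ and $A \in \A$. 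For $(2) \Rightarrow (3)$ I would induct on $m := \coresdim_{\A}(E)$: the base $m = 0$ is (2); for the inductive step, split the $\A$-coresolution of $E$ as $0 \to E \to A^0 \to E' \to 0$ with $A^0 \in \A$ and $\coresdim_{\A}(E') \leq m-1$, and use the piece
\begin{equation*}
\Tor_{i+1}^R(E', M) \to \Tor_i^R(E, M) \to \Tor_i^R(A^0, M)
\end{equation*}
of the long exact Tor-sequence, whose outer terms vanish for $i > n$ by the inductive hypothesis and by (2) respectively.

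For $(3) \Rightarrow (1)$ under $\Inj(R^{\op}) \subseteq \A$, take a partial projective resolution $0 \to K_n \to P_{n-1} \to \cdots \to P_0 \to M \to 0$; by Lemma \ref{DimensionesGF} it suffices to show $K_n \in \GF_{(\Flat, \A)}$. Dimension shifting yields $\Tor_i^R(A, K_n) \cong \Tor_{i+n}^R(A, M) = 0$ for every $i > 0$ and $A \in \A$, on applying (3) with $E = A$. Since $\Gfd_{(\Flat, \A)}(K_n) < \infty$, Theorem \ref{AB4}(a) produces a short exact sequence $0 \to L \to X \to K_n \to 0$ with $X \in \GF_{(\Flat, \A)}$ and $\fd(L) < \infty$; combined with the vanishing above and the analogous vanishing on $X$, the long exact Tor-sequence gives $\Tor_i^R(A, L) = 0$ for $i > 0$ and $A \in \A$. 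If we can prove $L$ is flat, then $L, X \in \GF_{(\Flat, \A)}$ and Lemma \ref{Bennis2.9}(3) applied to $0 \to L \to X \to K_n \to 0$ delivers $K_n \in \GF_{(\Flat, \A)}$.

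The main obstacle, and the place where $\Inj(R^{\op}) \subseteq \A$ is essential, is proving that $L$ is flat. I would use the standard duality $\Ext_{R^{\op}}^i(J, L^+) \cong \Tor_i^R(J, L)^+$ for right $R$-modules $J$, together with the equality $\id_{R^{\op}}(L^+) = \fd_R(L) =: k$. For every $J \in \Inj(R^{\op}) \subseteq \A$ the right side vanishes for $i > 0$, so $\Ext^i_{R^{\op}}(J, L^+) = 0$ whenever $i > 0$ and $J$ is injective. Assuming $k \geq 1$, pick a minimal injective coresolution $0 \to L^+ \to J^0 \to \cdots \to J^k \to 0$ and set $C^{k-1} := \Ker(J^{k-1} \to J^k)$. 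Dimension shifting along the coresolution identifies $\Ext^1_{R^{\op}}(J^k, C^{k-1})$ with $\Ext^k_{R^{\op}}(J^k, L^+)$, which vanishes by the preceding observation; hence the sequence $0 \to C^{k-1} \to J^{k-1} \to J^k \to 0$ splits, exhibiting $C^{k-1}$ as a direct summand of $J^{k-1}$ and therefore as injective. This contradicts the minimality $\id_{R^{\op}}(L^+) = k$, so $k = 0$; thus $L^+$ is injective and Lambek's theorem yields $L \in \Flat(R)$, closing the argument.
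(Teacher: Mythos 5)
Your proof is correct, and while $(1)\Rightarrow(2)$ and $(2)\Rightarrow(3)$ run exactly as in the paper (the paper handles the base case of $(1)\Rightarrow(2)$ by citing a lemma of Estrada--Iacob--P\'erez, you verify it directly from the definition of $\GF_{(\Flat,\A)}$), your $(3)\Rightarrow(1)$ takes a genuinely different route. The paper, following Bennis, truncates a flat resolution at a level $m>n$ beyond $\Gfd_{(\Flat,\A)}(M)$ so that the top syzygy lies in $\GF_{(\Flat,\A)}$, and then performs a descending induction, applying Lemma \ref{Bennis2.9}(3) at every step $0\to H_{i+1}\to G_i\to H_i\to 0$ down to $K_n$. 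You instead reduce to the $n$-th projective syzygy $K_n$, invoke the Auslander--Buchweitz approximation of Theorem \ref{AB4}(a) to write $0\to L\to X\to K_n\to 0$ with $X\in\GF_{(\Flat,\A)}$ and $\fd(L)<\infty$, and then show $L$ is actually flat by a character-module argument: the Tor-vanishing against $\Inj(R^{\op})\subseteq\A$ gives $\Ext^{i}_{R^{\op}}(J,L^{+})=0$ for injective $J$, and the splitting of the last cosyzygy sequence forces $\id(L^{+})=0$, hence $L$ flat by Lambek; a single application of Lemma \ref{Bennis2.9}(3) then finishes. This flatness step is essentially an injective-module analogue of Lemma \ref{FinitudPlana} and pinpoints exactly where $\Inj(R^{\op})\subseteq\A$ is used, which is a nice structural gain; the price is that you also need $\Gfd_{(\Flat,\A)}(K_n)<\infty$, which you assert but do not justify --- it does follow routinely (by Lemma \ref{DimensionesGF} the $m$-th projective syzygy is in $\GF_{(\Flat,\A)}$ for $m\geq\Gfd_{(\Flat,\A)}(M)$, so $K_n$ has a $\GF_{(\Flat,\A)}$-resolution of length $m-n$), so this is an omission worth a line rather than a gap. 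Also, your appeal to ``minimality'' of the injective coresolution is unnecessary: what the splitting contradicts is simply $\id(L^{+})=k$, and any coresolution of length $k$ suffices.
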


\begin{proof}
$(1) \Rightarrow (2)$. By induction over $n$. The case $n =0 $ is satisfied by \cite[Lemma 2.5]{Estrada18}. Thus, we may assume that $n >0$, There is therefore an exact sequence $0 \to K \to G \to M \to 0$, with $G \in \GF _{(\Flat, \A)}$ and $\Gfd _{(\Flat, \A)} (K) = n-1$. We know that for all $A \in \A$ it is satisfied that $\Tor ^R _{i} (A, G) =0$ for all $i > 0$ and  that $\Tor ^R _{i} (A, K) =0$ for all $i > n-1$ (by induction). Then we use the long exact sequence $\Tor ^R _{i +1}( A, G) \to \Tor ^R _{i +1} (A,M) \to \Tor ^R _{i } (A,K)$ to conclude that $\Tor ^R _{i +1} (A, M) =0$ for all $ i > n-1$.\\

$(2) \Rightarrow (3)$ It follows by a shifting argument.

$(3) \Rightarrow (1)$ Let us suppose that $\Inj (R^{\op}) \subseteq \A$. Since $\Gfd _{(\Flat, \A)} (M)$ is finite, from Lemma \ref{DimensionesGF} we can choose, for some $m> n$ an exact sequence
$$0 \to G_m \to \cdots \to G_0 \to M \to 0$$
with $G_0 , \dots , G_{m-1} \in  \Flat (R)$  and $G_m \in \GF_{(\Flat, \A)} $. Lets take $K_n := \Ker (G_{n-1} \to G_{n-2})$, the goal is to prove that $K_n \in \GF _{(\Flat, \A)}$. 

We split the sequence $0 \to G_m \to \cdots \to G_n \to K_n \to 0$, in short exact sequences $0 \to H_{i+1} \to G_i \to H_i \to 0$ for $i \in \{n , \dots, m-1 \}$, where $H_n = K_n $ y $H_m = G_m$. Thus, let us consider the exact sequence $0 \to H_m \to G_{m-1} \to H_{m-1} \to 0$. We claim that  $H_ {m-1}  \in \GF _{(\Flat, \A)}$. From the exact sequence $0 \to H_{m-1} \to \cdots \to G_0 \to M \to 0 $, we have that for all $E \in \A \subseteq  \Modu (R ^{\op})$ and all $i >0 $ we have the isomorphism $\Tor^R _i (E, H_{m-1}) \cong \Tor^R _{(m-1)+i} (E,M)=0$. Therefore, for $H_{m-1}$ there is an exact sequence $0 \to G_m \to G_{m-1} \to H_{m-1} \to 0$ and $\Tor ^R_i (E, H_{m-1}) =0$ for all $i >0$. Thus by Lemma \ref{Bennis2.9} we have that $H_{m-1} \in \GF _{(\Flat, \A)}$. Such  argument can be repeated to show that $H_{m-2} , \dots , H_n =K_n$ are all in $ \GF _{(\Flat, \A)}$. 
\end{proof}

\section{Finitistic Gorenstein flat \\and \\weak Gorenstein global dimensions}\label{S4}

Now that we have enough tools, we are ready to study other kinds of dimensions.
Given a class $\A \subseteq \Modu (R^{\op}) $, we define the \textbf{left weak Gorenstein global dimension} relative to the pair $(\Flat (R), \A)$ as follows:
$$\mathrm{l.w.Ggl} _{(\Flat, \A)}(R):= \sup \{\Gfd _{(\Flat, \A)} (M)| M \in \Modu (R)  \} .$$
From  Theorem \ref{Teo2.8Bennis09} the following result is easily deduced.

\begin{pro} \label{EquivalenciaGlobal}
Let  $ \A\subseteq \Modu (R^{\op}) $ be, with $\GF_{(\Flat, \A)}$ closed under extensions and $\mathrm{l.w.Ggl} _{(\Flat, \A)}(R)  <\infty$. Consider the following assertions.
\begin{itemize}
\item[(1)] $\mathrm{l.w.Ggl} _{(\Flat, \A)}(R) \leq n <\infty$,
\item[(2)]  $\fd (A) \leq n$ for all $A \in \A \subseteq \Modu (R^{\op})$,
\item[(3)] $\fd (E) \leq n$ for all $E \in \A ^{\cogorro} \subseteq \Modu (R^{\op})$.
\end{itemize}
Then $(1) \Rightarrow (2) \Rightarrow (3)$. Furthermore, if $\Inj (R^{\op}) \subseteq \A$ then all assertions are equivalent i.e. the following equalities are true
$$\mathrm{l.w.Ggl} _{(\Flat, \A)}(R) = \fd (\A) = \fd (\A^{\cogorro}).$$

\end{pro}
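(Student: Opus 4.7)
The plan is to read off all three implications directly from Theorem \ref{Teo2.8Bennis09}, applied uniformly over all $M \in \Modu(R)$, with only a small amount of elementary homological bookkeeping in the step $(2) \Rightarrow (3)$.

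For $(1) \Rightarrow (2)$, I would assume $\Gfd_{(\Flat,\A)}(M) \leq n$ for every $M \in \Modu(R)$. Fix $A \in \A$. By the implication $(1) \Rightarrow (2)$ of Theorem \ref{Teo2.8Bennis09}, $\Tor_i^R(A, M) = 0$ for all $i > n$ and every $M$. This is exactly the statement $\fd(A) \leq n$.

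For $(2) \Rightarrow (3)$, I would take $E \in \A^{\cogorro}$ and fix an exact sequence $0 \to E \to A^0 \to A^1 \to \cdots \to A^k \to 0$ with $A^i \in \A$, so $\fd(A^i) \leq n$ by hypothesis. Splitting into short exact sequences $0 \to K^i \to A^i \to K^{i+1} \to 0$ (with $K^0 = E$, $K^k = A^k$) and using the inequality $\fd(X) \leq \max\{\fd(Y),\, \fd(Z)-1\}$ for $0 \to X \to Y \to Z \to 0$, a downward induction from $\fd(K^k) = \fd(A^k) \leq n$ yields $\fd(K^i) \leq n$ for every $i$, so in particular $\fd(E) \leq n$.

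For $(3) \Rightarrow (1)$ under the extra hypothesis $\Inj(R^{\op}) \subseteq \A$, I would invoke the implication $(3) \Rightarrow (1)$ of Theorem \ref{Teo2.8Bennis09}. For an arbitrary $M$, finiteness $\Gfd_{(\Flat,\A)}(M) < \infty$ is given by the standing hypothesis $\mathrm{l.w.Ggl}_{(\Flat,\A)}(R) < \infty$, while $\fd(E) \leq n$ for all $E \in \A^{\cogorro}$ immediately gives $\Tor_i^R(E,M) = 0$ for $i > n$. Theorem \ref{Teo2.8Bennis09} then yields $\Gfd_{(\Flat,\A)}(M) \leq n$, and taking the sup over $M$ gives (1).

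Finally, the equalities $\mathrm{l.w.Ggl}_{(\Flat,\A)}(R) = \fd(\A) = \fd(\A^{\cogorro})$ follow by taking infima over admissible $n$ in the three equivalent statements; the inclusion $\A \subseteq \A^{\cogorro}$ supplies the trivial inequality $\fd(\A) \leq \fd(\A^{\cogorro})$, and the reverse inequality is the content of $(2) \Rightarrow (3)$. There is no real obstacle here: the statement is essentially a reformulation of Theorem \ref{Teo2.8Bennis09} at the level of global dimension, and the only non-formal ingredient is the elementary control of flat dimension along a finite coresolution used in $(2) \Rightarrow (3)$.
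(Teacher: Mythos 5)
Your proof is correct and follows essentially the same route as the paper, which simply deduces the proposition from Theorem \ref{Teo2.8Bennis09}; you have merely made explicit the routine reformulation of $\Tor$-vanishing as flat dimension and the standard dimension-shifting along a finite $\A$-coresolution for $(2)\Rightarrow(3)$. No gaps.
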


To further study of the weak global dimension, we need more tool. To do so, let us consider a complementary dimension, which is an adaptation of the \textit{copure flat dimension} defined by E. Enochs and O. M. G. Jenda \cite{EnJen93}.

\begin{defi}
Consider $\A \subseteq  \Modu (R^{\op}) $ and $M \in \Modu (R)$. \textbf{The $\A$-flat co-dimension of $M$}, denoted $c\fd _{\A} (M)$ is the largest positive integer $n$ such that $\Tor _{n} (A, M) \not= 0$, for some $A \in \A$, this is
$$c\fd _{\A} (M) := \sup \{n: \Tor  ^{R} _n (A, M) \not = 0 \mbox{ for some } A \in \A \}.$$
\end{defi}

Note that if $\GF_{(\Flat, \A)}$ is closed under extensions, then by Theorem \ref{Teo2.8Bennis09} we have that for all  $M \in \Modu (R)$ the following inequality holds
$$c\fd _{\A} (M) \leq \Gfd _{(\Flat, \A)} (M).$$

At this point we see necessary to consider another kind of duality pair $(\Le, \A)$ with the property  that $(^{\ortogonal _1} \A , \A)$ will be a cotorsion pair.

\begin{defi}\cite[Definition 3.2]{Wang20} \label{Bi-complete}
A duality pair $(\Le, \A)$ in $\Modu (R)$ is called \textbf{bi-complete} if $(\Le, \A)$ is a complete duality pair such that the pair $(^{\ortogonal_1} \A, \A)$ forms a hereditary cotorsion pair cogenerated by a set.
\end{defi}

The theory developed in \cite[\S 3.1]{Wang20} shows that the pairs $(\Le, \Le ^{\ortogonal _1})$ and $(^{\ortogonal _1 }\A, \A)$ are complete and hereditary cotorsion pairs. Thus the coming results with $(^{\ortogonal _1 }\A, \A)$ as hypothesis can be thought over a bi-complete duality pair. We will give examples of the above mentioned pairs in Example \ref{E-Bi-complete} below.

\begin{lem} \label{FinitudPlana}
Let $\A \subseteq \Modu (R ^{\op})$ be, such that $(^{\ortogonal _1} \A, \A)$ is a complete cotorsion pair,  and suppose that $\GF _{(\Flat, \A)}$ is closed under extensions. Then for all   $M \in \Flat (R) ^{\gorro}$, it holds   $$c\fd _{\A} (M)  = \Gfd _{(\Flat, \A)} (M) = \fd (M).$$
\end{lem}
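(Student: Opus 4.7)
The plan is to close the chain of inequalities $c\fd _{\A} (M) \leq \Gfd _{(\Flat, \A)} (M) \leq \fd (M)$. The first inequality is the implication $(1) \Rightarrow (2)$ of Theorem \ref{Teo2.8Bennis09} applied with $n = \Gfd _{(\Flat, \A)} (M)$, which is precisely the observation made immediately before Definition \ref{Bi-complete}; the second is immediate from $\Flat (R) \subseteq \GF _{(\Flat, \A)}$. Since $M \in \Flat (R) ^{\gorro}$, all three quantities are finite, so the only substantive task is to establish the reverse inequality $\fd (M) \leq n$, where $n := c\fd _{\A} (M)$.

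The strategy is to truncate a finite flat resolution of $M$ at level $n$: letting $K$ be the $n$-th syzygy in this resolution, $\fd (K)$ is finite, and the standard dimension-shifting isomorphism $\Tor _i ^R (N, K) \cong \Tor _{i+n} ^R (N, M)$, valid for every $R ^{\op}$-module $N$ and every $i \geq 1$ because $K$ sits below $n$ flat modules, specializes with $N = A \in \A$ to $\Tor _i ^R (A, K) = 0$ for all $i \geq 1$, by definition of $n$. The desired bound $\fd (M) \leq n$ is then equivalent to the assertion that $K$ is flat, which is the only non-formal step of the proof.

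To prove flatness of $K$, I would argue by contradiction. Assume $d := \fd (K) \geq 1$ and let $N$ be an arbitrary $R ^{\op}$-module. The completeness of the cotorsion pair $({}^{\ortogonal _1} \A , \A)$ in $\Modu (R ^{\op})$ yields a short exact sequence $0 \to N \to A \to L \to 0$ with $A \in \A$ and $L \in {}^{\ortogonal _1} \A$; plugging this into the long $\Tor$-sequence produces
$$\Tor _{d+1} ^R (L, K) \longrightarrow \Tor _d ^R (N, K) \longrightarrow \Tor _d ^R (A, K).$$
The left term vanishes because $\fd (K) = d$, and the right term vanishes because $A \in \A$ together with $d \geq 1$. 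Hence $\Tor _d ^R (N, K) = 0$ for every $N$, contradicting $\fd (K) = d$; thus $K$ must be flat and consequently $\fd (M) \leq n$.

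The main obstacle is this last step, and its key ingredient is the completeness of $({}^{\ortogonal _1} \A, \A)$: by means of a single dimension shift it upgrades the Tor-vanishing of $K$ against $\A$ to Tor-vanishing of $K$ against every $R ^{\op}$-module. The remaining hypotheses play only auxiliary roles; closure of $\GF _{(\Flat, \A)}$ under extensions is needed solely to reach the first inequality via Theorem \ref{Teo2.8Bennis09}, and no further structural property of $\A$ enters the argument.
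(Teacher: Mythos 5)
Your proposal is correct and takes essentially the same approach as the paper: the decisive step in both is the short exact sequence $0 \to N \to A \to C \to 0$ with $A \in \A$ and $C \in {}^{\ortogonal_1}\A$ supplied by completeness of the cotorsion pair, combined with a three-term piece of the long exact $\Tor$ sequence in the top degree. The only difference is packaging — the paper argues directly on $M$ at degree $\fd(M)$, choosing $N$ with $\Tor^R_{\fd(M)}(N,M)\neq 0$ and forcing $\Tor^R_{\fd(M)}(A,M)\neq 0$, while you dimension-shift to the $n$-th syzygy and prove it flat, which is the contrapositive of the same computation.
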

\begin{proof}
By Theorem \ref{Teo2.8Bennis09} we already know that $c\fd _{\A} (M) \leq \Gfd _{(\Flat, \A)} (M) \leq \fd (M)$, since $\Flat (R) \subseteq \GF _{(\Flat, \A)}$. In particular we have that $c\fd _{\A} (M) \leq \fd (M)$. Suppose now that  $\fd (M) := n < \infty$. Then, there is  $N \in \Modu (R^{\op}) $ such that $\Tor ^R _n (N,M) \not= 0 $, and for such $N$ there is an exact sequence $\gamma :0 \to N \to A \to C \to 0$, with $A \in \A$ and $C \in {^{\ortogonal _1} \A}$.  Applying $- \otimes _R M$ to the sequence $\gamma $ we obtain the exact sequence 
$$0=\Tor^R _{n+1} (C, M) \to \Tor^R _n (N, M) \to \Tor^R _n (A, M)$$
where the left-hand side is zero since $\fd (M) =n$, thus $\Tor_{n} (A, M) \not= 0 $.  Which implies that $c\fd _{\A} (M) \geq n = \fd (M)$. This is $c\fd _{\A} (M) = \fd (M) $
\end{proof}

We are interested in study the (left and right) weak global dimension relative to a   duality pair $(\Le, \A)$. Thus, for a class $\Le \subseteq \Modu (R)$ we define the  \textbf{right weak  Gorenstein global dimension} relative to the pair  $(\Flat (R^{\op}), \Le )$ as follows 
$$\mathrm{r.w.Ggl} _{(\Flat(R^{\op}), \Le )}(R):= \sup \{\Gfd _{(\Flat(R^{\op}), \Le )}  (M)| M \in \Modu (R^{\op})  \}.$$
From such definition we obtain a dual version of Proposition  \ref{EquivalenciaGlobal} as follows.

\begin{pro} \label{EquivalenciaGlobalR}
Let $\Le \subseteq \Modu (R) $ be, with $\GF _{(\Flat(R^{\op}), \Le )}$ closed under extensions in $\Modu (R^{\op})$ and $\mathrm{r.w.Ggl} _{(\Flat(R^{\op}), \Le )}(R) <\infty$. Consider the following assertions.
\begin{itemize}
\item[(1)] $\mathrm{r.w.Ggl} _{(\Flat(R^{\op}), \Le )}(R) \leq n <\infty$.
\item[(2)]  $\fd (L) \leq n$ for all $L \in \Le  \subseteq \Modu (R)$.
\item[(3)] $\fd (E) \leq n$ for all $E \in \Le  ^{\cogorro} \subseteq \Modu (R)$.
\end{itemize}
Then $(1) \Rightarrow (2) \Rightarrow (3)$. Furthermore, if $\Inj (R) \subseteq \Le$ then all assertions are equivalent i.e. the following equalities are true
$$ \mathrm{r.w.Ggl} _{(\Flat(R^{\op}), \Le )}(R) = \fd (\Le) = \fd (\Le ^{\cogorro}).$$

\end{pro}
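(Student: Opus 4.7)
The plan is to mirror the proof of Proposition \ref{EquivalenciaGlobal} line by line, now working in $\Modu(R^{\op})$ with the class $\Le \subseteq \Modu(R)$ acting on right modules through the bifunctor $-\otimes_R -$. All the ingredients used in that earlier proof---in particular Lemma \ref{Bennis2.9} and Theorem \ref{Teo2.8Bennis09}---have obvious right-module analogues with literally the same proofs, since the only categorical inputs are character modules and Lambek's theorem, both of which exchange $R$ and $R^{\op}$ symmetrically. I would begin by invoking those symmetric versions as given.

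For $(1) \Rightarrow (2)$, fix any $L \in \Le$ and any $N \in \Modu(R^{\op})$. The hypothesis $\Gfd_{(\Flat(R^{\op}), \Le)}(N) \leq n$ combined with the right-handed form of Theorem \ref{Teo2.8Bennis09} yields $\Tor^R_i(N, L) = 0$ for every $i > n$; since $N$ was arbitrary, the definition of flat dimension gives $\fd(L) \leq n$. For $(2) \Rightarrow (3)$ I would induct on the length of an $\Le$-coresolution $0 \to E \to L^0 \to \cdots \to L^k \to 0$ of $E \in \Le^{\cogorro}$: splitting off the short exact sequence $0 \to E \to L^0 \to E' \to 0$ and applying the standard Tor-shift inequality $\fd(E) \leq \max\{\fd(L^0), \fd(E') - 1\}$, together with the induction hypothesis $\fd(E') \leq n$ and the assumption $\fd(L^0) \leq n$, gives $\fd(E) \leq n$.

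For the converse $(3) \Rightarrow (1)$ under the extra hypothesis $\Inj(R) \subseteq \Le$, the standing assumption on $\mathrm{r.w.Ggl}_{(\Flat(R^{\op}), \Le)}(R)$ guarantees that $\Gfd_{(\Flat(R^{\op}), \Le)}(N)$ is finite for every $N \in \Modu(R^{\op})$, so the implication $(3) \Rightarrow (1)$ of the right-handed Theorem \ref{Teo2.8Bennis09} is available. Hypothesis (3), translated into the Tor-vanishing $\Tor^R_i(N, E) = 0$ for all $i > n$ and all $E \in \Le^{\cogorro}$, then forces $\Gfd_{(\Flat(R^{\op}), \Le)}(N) \leq n$; taking the supremum over $N \in \Modu(R^{\op})$ completes the implication, and the claimed equalities $\mathrm{r.w.Ggl}_{(\Flat(R^{\op}), \Le)}(R) = \fd(\Le) = \fd(\Le^{\cogorro})$ drop out by taking infima over $n$. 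The only non-routine step is checking that the right-handed analogue of Lemma \ref{Bennis2.9} actually goes through; its proof uses only character modules, Lambek duality, and closure of $\Flat$ under pure quotients, all of which are side-symmetric, so I expect no genuine obstacle here.
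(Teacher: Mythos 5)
Your proposal is correct and follows essentially the same route the paper intends: the paper gives no separate argument for this proposition, presenting it as the mirror-image of Proposition \ref{EquivalenciaGlobal}, i.e., apply the right-module analogue of Theorem \ref{Teo2.8Bennis09} (whose ingredients, including the analogue of Lemma \ref{Bennis2.9}, are side-symmetric) to each $N \in \Modu(R^{\op})$, exactly as you do. Your explicit handling of $(2)\Rightarrow(3)$ by dimension shifting and your use of the standing finiteness hypothesis for $(3)\Rightarrow(1)$ match the intended argument.
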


\begin{pro} \label{UnderLimit}
Consider a pair of classes $(\Le,\A) \subseteq \Modu (R) \times \Modu (R^{\op})$ that  satisfies the following conditions
\begin{itemize}
\item[(1)] The pair $(^{\ortogonal _1} \A , \A)$ is a complete cotorsi\'on pair.
\item[(2)] The classes $\GF _{(\Flat (R^{\op}), \Le )}$  and  $\GF_{(\Flat, \A)}$ are closed under extensions.
\item[(3)]  The dimension $\mathrm{r.w.Ggl} _{(\Flat (R^{\op}), \Le)}(R) $ is finite.
\end{itemize}
 Then the inequality holds.
 $$\fd  (\Le) \leq \mathrm{l.w.Ggl} _{(\Flat(R), \A)}(R)$$
\end{pro}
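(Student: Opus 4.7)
The plan is to reduce the inequality to a pointwise statement over $\Le$ and then use Lemma \ref{FinitudPlana} to convert the flat dimension of an $L \in \Le$ into its Gorenstein $(\Flat, \A)$-flat dimension. So first I would set $n := \mathrm{l.w.Ggl}_{(\Flat(R), \A)}(R)$; if $n = \infty$ there is nothing to prove, so assume $n$ is finite. It then suffices to show $\fd(L) \leq n$ for every $L \in \Le$.

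The main issue is that Lemma \ref{FinitudPlana} only applies to modules of finite flat dimension, so the first job is to verify that every $L \in \Le$ already lies in $\Flat(R)^{\wedge}$. This is where hypotheses $(2)$ and $(3)$ come in: they are precisely the running assumptions of Proposition \ref{EquivalenciaGlobalR}, namely that $\GF_{(\Flat(R^{\op}), \Le)}$ is closed under extensions and that $\mathrm{r.w.Ggl}_{(\Flat(R^{\op}), \Le)}(R) < \infty$. Applying the implication $(1)\Rightarrow(2)$ of Proposition \ref{EquivalenciaGlobalR} (which does \emph{not} require the containment $\Inj(R) \subseteq \Le$), I obtain
\[
\fd(L) \;\leq\; \mathrm{r.w.Ggl}_{(\Flat(R^{\op}), \Le)}(R) \;<\; \infty
\qquad \text{for every } L \in \Le,
\]
so $\Le \subseteq \Flat(R)^{\wedge}$.

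Now I can invoke Lemma \ref{FinitudPlana} using hypothesis $(1)$ (the completeness of the cotorsion pair $({}^{\perp_1}\A, \A)$) and hypothesis $(2)$ (closure of $\GF_{(\Flat, \A)}$ under extensions): for each such $L$,
\[
\fd(L) \;=\; \Gfd_{(\Flat,\A)}(L).
\]
By definition of the left weak Gorenstein global dimension, $\Gfd_{(\Flat,\A)}(L) \leq n$, so $\fd(L) \leq n$. Taking the supremum over $L \in \Le$ yields $\fd(\Le) \leq n = \mathrm{l.w.Ggl}_{(\Flat(R), \A)}(R)$, which is the claimed inequality.

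There is really no serious obstacle once the pieces are lined up; the only subtlety worth checking is that Proposition \ref{EquivalenciaGlobalR}(1)$\Rightarrow$(2) is used in its ``one-sided'' form (no need for $\Inj(R) \subseteq \Le$), and that Lemma \ref{FinitudPlana} is genuinely available because the hypothesis guaranteeing finite flat dimension of $L$ is deduced \emph{before} we try to apply it. Structurally, the proof is a two-line chain: the right-hand global dimension forces each $L \in \Le$ into $\Flat(R)^{\wedge}$, and there its flat dimension is measured by the left-hand Gorenstein flat dimension.
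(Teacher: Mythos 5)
Your proof is correct and follows essentially the same route as the paper's: apply Proposition \ref{EquivalenciaGlobalR}~(1)$\Rightarrow$(2) to get $\fd(\Le) \leq \mathrm{r.w.Ggl}_{(\Flat(R^{\op}),\Le)}(R) < \infty$, so every $L \in \Le$ lies in $\Flat(R)^{\wedge}$, and then use Lemma \ref{FinitudPlana} to identify $\fd(L)$ with $\Gfd_{(\Flat,\A)}(L) \leq \mathrm{l.w.Ggl}_{(\Flat(R),\A)}(R)$. Your explicit handling of the trivial case $\mathrm{l.w.Ggl}_{(\Flat(R),\A)}(R) = \infty$ and the remark that the one-sided implication of Proposition \ref{EquivalenciaGlobalR} needs no containment $\Inj(R) \subseteq \Le$ are just careful spellings-out of what the paper leaves implicit.
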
 

\begin{proof}
Indeed, since $\mathrm{r.w.Ggl} _{(\Flat (R^{\op}), \Le)}(R)$ is finite, by Proposition \ref{EquivalenciaGlobalR} we have that  $ \fd (\Le) \leq \mathrm{r.w.Ggl} _{(\Flat (R^{\op}), \Le)}(R)  < \infty$, then using Lemma \ref{FinitudPlana} we obtain for all $L \in \Le$  that $ \Gfd _{(\Flat (R) , \A)} (L) = \fd (L)$, this implies  $\fd (\Le) \leq \mathrm{l.w.Ggl} _{(\Flat(R), \A)}(R)$.
\end{proof}


In what follows, we recall for a ring $R$ the left finitistic flat  dimension $\mathrm{l.FFD} (R)$ and we define with respect to the pair  $(\Flat, \A)$ the \textbf{left finitistic Gorenstein flat dimension}.
\begin{center}
$l.\mathrm{FFD} (R) = \sup \{ \fd (M ) : M \in \Modu (R) \mbox{ such that } M \in \Flat (R) ^{\gorro} \}$,\\
$l.\mathrm{FGFD}_{(\Flat, \A)} (R) = \sup \{ \Gfd _{(\Flat, \A)} (M ) : M \in \Modu (R) \mbox{ such that } M \in \GF_{(\Flat, \A) } ^{\gorro}\}.$
\end{center}

We are interested in compare both finitistic dimensions, for this end we have the following result.

\begin{pro}\label{Desigualdad}
Let  $\A \subseteq \Modu (R^{\op})  $ be, and suppose that  $\GF _{(\Flat, \A)}$ is closed under extensions, then the following inequality holds
$$l.\mathrm{FGFD}_{(\Flat, \A)} (R) \leq  l.\mathrm{FFD} (R)  .$$
\end{pro}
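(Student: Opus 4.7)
The plan is to reduce the inequality to a direct consequence of Theorem \ref{AB4}(a), which is available because our standing hypothesis is exactly that $\GF_{(\Flat,\A)}$ is closed under extensions. The underlying idea is that any module of finite $(\Flat,\A)$-Gorenstein flat dimension fits into a short exact sequence whose middle term has strictly finite flat dimension equal to $\Gfd_{(\Flat,\A)}(M)$; this middle term then serves as a witness that the Gorenstein flat dimension is bounded above by $l.\mathrm{FFD}(R)$.

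First, let $M\in\Modu(R)$ with $M\in \GF_{(\Flat,\A)}^{\wedge}$, and write $n:=\Gfd_{(\Flat,\A)}(M)<\infty$. By Theorem \ref{AB4}(a) applied to $M$ (whose hypotheses are precisely what we assume) there exists a short exact sequence
$$0\to M\stackrel{\varphi'}\to H\to X'\to 0$$
with $\resdim_{\Flat}(H)=n$ and $X'\in\GF_{(\Flat,\A)}$. Using the standard identification $\fd(-)=\resdim_{\Flat}(-)$ (cf.\ \cite[Proposition 8.17]{Rotman}, already invoked before Proposition \ref{DimensionCoincide}), we obtain $\fd(H)=n<\infty$, i.e.\ $H\in\Flat(R)^{\wedge}$. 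By the very definition of the finitistic flat dimension,
$$\Gfd_{(\Flat,\A)}(M)\;=\;n\;=\;\fd(H)\;\leq\;l.\mathrm{FFD}(R).$$

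Taking the supremum over all $M\in\GF_{(\Flat,\A)}^{\wedge}$ then yields $l.\mathrm{FGFD}_{(\Flat,\A)}(R)\leq l.\mathrm{FFD}(R)$, as desired. Since both ingredients (Theorem \ref{AB4}(a) and the resolution-dimension identification for flats) are cited results, the proof is essentially a one-step reduction; there is no real obstacle beyond carefully noting that Theorem \ref{AB4}(a) is available under the single hypothesis we assume, so the stronger assumptions of Theorem \ref{AB4}(b) are not needed here.
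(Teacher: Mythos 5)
Your proof is correct and follows essentially the same route as the paper: both apply Theorem \ref{AB4}(a) to a module $M\in\GF_{(\Flat,\A)}^{\wedge}$ to obtain $0\to M\to H\to X'\to 0$ with $\fd(H)=\Gfd_{(\Flat,\A)}(M)$, and then bound $\fd(H)$ by $l.\mathrm{FFD}(R)$. The only cosmetic difference is that the paper first reduces to the case $l.\mathrm{FFD}(R)<\infty$, whereas you observe directly that $H\in\Flat(R)^{\wedge}$ makes the bound automatic; both are fine.
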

\begin{proof}
We can suppose that  $n:= l.\mathrm{FFD} (R) < \infty$. Consider $ M \in \GF ^{\gorro} _{(\Flat, \A)}$, by Theorem  \ref{AB4} we know that there is an exact sequence 
$ 0 \to M \to H \to X' \to 0$ with $X' \in \GF _{(\Flat,  \A)}$ and $\fd (H) = \Gfd _{(\Flat, \A)} (M)$, but by hypothesis $\fd (H) \leq n$. This is $\Gfd _{(\Flat, \A)} (M) \leq n$ for all $M \in  \GF ^{\gorro} _{(\Flat, \A)}$.  
\end{proof}

\begin{cor}
Let $\A \subseteq \Modu (R ^{\op})$ be, such that $(^{\ortogonal _1} \A, \A)$ is a complete cotorsion pair,  and suppose that $\GF _{(\Flat, \A)}$ is closed under extensions. Then the following equality holds $$l.\mathrm{FFD} (R)  = l.\mathrm{FGFD}_{(\Flat, \A)} (R).$$
\end{cor}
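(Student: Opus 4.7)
The plan is to combine Proposition \ref{Desigualdad} with Lemma \ref{FinitudPlana}. Proposition \ref{Desigualdad} already provides the inequality $l.\mathrm{FGFD}_{(\Flat, \A)} (R) \leq l.\mathrm{FFD} (R)$ under the sole hypothesis that $\GF _{(\Flat, \A)}$ is closed under extensions, so only the reverse inequality needs to be established, and this is precisely where the additional hypothesis that $(^{\ortogonal_1}\A, \A)$ is a complete cotorsion pair is used.

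For the reverse inequality, I would assume $l.\mathrm{FGFD}_{(\Flat,\A)}(R) = n < \infty$ (otherwise there is nothing to prove) and pick an arbitrary $M \in \Flat(R)^{\gorro}$. Since $\Flat(R) \subseteq \GF_{(\Flat,\A)}$, any finite flat resolution of $M$ is also a finite $\GF_{(\Flat,\A)}$-resolution, so $M \in \GF_{(\Flat,\A)}^{\gorro}$. Now Lemma \ref{FinitudPlana} applies (since both the cotorsion-pair hypothesis on $\A$ and the closure under extensions of $\GF_{(\Flat,\A)}$ hold, and $M$ has finite flat dimension), yielding $\fd(M) = \Gfd_{(\Flat,\A)}(M) \leq n$. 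Taking supremum over all such $M$ gives $l.\mathrm{FFD}(R) \leq n = l.\mathrm{FGFD}_{(\Flat,\A)}(R)$, which finishes the argument.

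There is essentially no technical obstacle: the equality is a direct corollary of the two preceding results, with the role of the complete cotorsion pair $({}^{\ortogonal_1}\A, \A)$ being precisely to activate Lemma \ref{FinitudPlana} so that the Gorenstein flat dimension collapses to the ordinary flat dimension on the class $\Flat(R)^{\gorro}$. The only point one needs to be careful about is to note at the outset the trivial containment $\Flat(R)^{\gorro} \subseteq \GF_{(\Flat,\A)}^{\gorro}$, which is what makes the two finitistic dimensions comparable in the first place.
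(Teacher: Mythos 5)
Your proposal is correct and follows essentially the same route as the paper: Proposition \ref{Desigualdad} gives $l.\mathrm{FGFD}_{(\Flat,\A)}(R) \leq l.\mathrm{FFD}(R)$, and Lemma \ref{FinitudPlana} (via the containment $\Flat(R)^{\gorro} \subseteq \GF_{(\Flat,\A)}^{\gorro}$) collapses $\Gfd_{(\Flat,\A)}$ to $\fd$ on $\Flat(R)^{\gorro}$, yielding the reverse inequality. Your write-up only spells out slightly more explicitly the comparison of suprema that the paper leaves implicit.
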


\begin{proof}
For the inequality   $l.\mathrm{FFD} (R) \leq l.\mathrm{FGFD} _{(\Flat, \A)} (R) $ is enough to prove that $\Gfd _{(\Flat, \A)} (M) = \fd (M)$ for all $M \in \Flat (R) ^{\gorro}$, which is true by Lemma \ref{FinitudPlana}. 

The other inequality it follows from Proposition \ref{Desigualdad}.
\end{proof}

\begin{rk} \label{GlobalFinitista}
Observe that if $\mathrm{l.w.Ggl} _{(\Flat(R), \A)}(R) < \infty$ then 
$$\mathrm{l.w.Ggl} _{(\Flat, \A)}(R)  =  l.\mathrm{FGFD}_{(\Flat, \A)} (R)$$
 and if $\Inj (R^{\op}) \subseteq \A$ and $\GF _{(\Flat , \A)}$ is closed under extensions then by Proposition \ref{EquivalenciaGlobal}  $\fd (\A) = \mathrm{l.w.Ggl} _{(\Flat, \A)}(R)  = l.\mathrm{FGFD}_{(\Flat, \A)} (R) $ and if in addition $(^{\ortogonal _1} \A , \A)$ is a complete cotorsion pair, then by the previous Corollary we have the equalities $$\fd (\A) = \mathrm{l.w.Ggl} _{(\Flat, \A)}(R)  = l.\mathrm{FGFD}_{(\Flat, \A)} (R)  = l.\mathrm{FFD} (R) .$$
\end{rk}

Remember that a semi-complete (or perfect) duality pair $(\Le, \A)$ satisfies that $ \Inj (R^{\op}) \subseteq  \A$ \cite[Remark 2.6]{Gill21} and that the pair $(\A, \Inj (R^{\op}))$ is GI-admissible. 
Note also that the containment in the hypothesis of the following result is satisfied when $\Inj (R^{\op}) ^{\cogorro} \subseteq \Proj (R^{\op}) ^{\gorro}$.
 
\begin{pro} \label{FinitistaFrobenius}
Let $ \A\subseteq \Modu (R^{\op}) $ be a class such that the pair $(\A , \Inj (R^{\op}))$ is GI-admissible in $\Modu (R^{\op})$. If $(\Flat (R) ^{\gorro}) ^{+ } \subseteq \Proj (R^{\op}) ^{\gorro}$  then we have the inequality $$l. \mathrm{FFD} (R) \leq   l.\mathrm{FGFD}_{(\Flat, \A)} (R).$$
\end{pro}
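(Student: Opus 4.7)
The plan is to reduce the inequality to Proposition~\ref{DimensionCoincide} applied pointwise to witnesses of the left finitistic flat dimension. Since $l.\mathrm{FFD}(R)$ is defined as the supremum of $\fd(M)$ over $M\in\Flat(R)^{\gorro}$, it will suffice to produce, for each such $M$, the bound $\fd(M)\leq l.\mathrm{FGFD}_{(\Flat,\A)}(R)$; taking the supremum on the left then yields the desired statement. This is a supremum-vs-supremum comparison, so the work is entirely at the level of a single module.

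First I will fix an arbitrary $M\in\Flat(R)^{\gorro}$. The hypothesis $(\Flat(R)^{\gorro})^{+}\subseteq\Proj(R^{\op})^{\gorro}$ applied to $M$ gives $M^{+}\in\Proj(R^{\op})^{\gorro}$, i.e.\ $\pd(M^{+})<\infty$. This is exactly the input required by Proposition~\ref{DimensionCoincide}, which, using that $(\A,\Inj(R^{\op}))$ is GI-admissible, delivers the equality $\fd(M)=\Gfd_{(\Flat,\A)}(M)$. On the other hand, the containment $\Flat(R)\subseteq\GF_{(\Flat,\A)}$ passes to the associated classes of finite resolution dimension, so $\Flat(R)^{\gorro}\subseteq\GF_{(\Flat,\A)}^{\gorro}$; in particular $M$ is a legitimate candidate in the supremum defining $l.\mathrm{FGFD}_{(\Flat,\A)}(R)$, giving $\Gfd_{(\Flat,\A)}(M)\leq l.\mathrm{FGFD}_{(\Flat,\A)}(R)$. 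Chaining these two facts yields $\fd(M)\leq l.\mathrm{FGFD}_{(\Flat,\A)}(R)$, and taking the supremum over $M\in\Flat(R)^{\gorro}$ proves the proposition.

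I do not expect a genuine obstacle here, since Proposition~\ref{DimensionCoincide} does all the heavy lifting: the GI-admissibility of $(\A,\Inj(R^{\op}))$ and the containment $(\Flat(R)^{\gorro})^{+}\subseteq\Proj(R^{\op})^{\gorro}$ are present exactly so that its hypotheses are satisfied on every $M\in\Flat(R)^{\gorro}$. The only point to keep an eye on is ensuring that the character-module hypothesis is available for \emph{every} such $M$, not merely some distinguished ones; this is automatic from the assumed class-level containment. Combined with the reverse inequality in Proposition~\ref{Desigualdad}, this will moreover produce the equality $l.\mathrm{FFD}(R)=l.\mathrm{FGFD}_{(\Flat,\A)}(R)$ under the closure hypothesis on $\GF_{(\Flat,\A)}$.
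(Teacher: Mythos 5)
Your proposal is correct and follows essentially the same route as the paper: fix $M\in\Flat(R)^{\gorro}$, use the hypothesis to get $M^{+}\in\Proj(R^{\op})^{\gorro}$, apply Proposition~\ref{DimensionCoincide} to obtain $\fd(M)=\Gfd_{(\Flat,\A)}(M)$, and bound the latter by $l.\mathrm{FGFD}_{(\Flat,\A)}(R)$ since $\Flat(R)^{\gorro}\subseteq\GF_{(\Flat,\A)}^{\gorro}$. The only cosmetic difference is that the paper first reduces to the case $l.\mathrm{FGFD}_{(\Flat,\A)}(R)<\infty$, which your argument handles implicitly.
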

\begin{proof}
 We can assume that $ l.\mathrm{FGFD}_{(\Flat, \A)} (R) := n < \infty$. Lets  take $T \in \Flat (R) ^{\gorro}$, then by hypothesis $T ^{+} \in \Proj (R^{\op}) ^{\gorro}$.  Thus, by Proposition \ref{DimensionCoincide} we have that $ \Gfd _{(\Flat, \A)}(T) = \fd (T) < \infty$, but $\Gfd _{(\Flat, \A)}(T) \leq n$. This is $\fd (T) \leq n$ for all $T \in \Flat (R) ^{\gorro}$ .
\end{proof}

\begin{cor}
Let $ \A\subseteq \Modu (R^{\op}) $ be a class such that the pair $(\A , \Inj (R^{\op}))$ is GI-admissible in $\Modu (R^{\op})$ with $(\Flat (R) ^{\gorro}) ^{+ } \subseteq \Proj (R^{\op}) ^{\gorro}$ and suppose that  $\GF _{(\Flat, \A)}$ is closed under extensions. Then the following equality holds 
$$l. \mathrm{FFD} (R) =   l.\mathrm{FGFD}_{(\Flat, \A)} (R).$$
\end{cor}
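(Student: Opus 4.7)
The plan is to observe that the stated equality is precisely the combination of the two inequalities established immediately before, so the proof reduces to checking that the hypotheses of each of the two propositions are in force.

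First, I would record the inequality $l.\mathrm{FGFD}_{(\Flat,\A)}(R) \leq l.\mathrm{FFD}(R)$. This is exactly the content of Proposition \ref{Desigualdad}, whose only hypothesis is that $\GF_{(\Flat,\A)}$ be closed under extensions, which we are assuming. So nothing extra is needed here; the inequality is immediate.

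Next, I would establish the reverse inequality $l.\mathrm{FFD}(R) \leq l.\mathrm{FGFD}_{(\Flat,\A)}(R)$. This is the statement of Proposition \ref{FinitistaFrobenius}, and its hypotheses are that $(\A,\Inj(R^{\op}))$ is GI-admissible in $\Modu(R^{\op})$ and that $(\Flat(R)^{\gorro})^{+} \subseteq \Proj(R^{\op})^{\gorro}$. Both are part of the assumptions of the corollary, so this inequality is also immediate. (In fact, the role of the closure of $\GF_{(\Flat,\A)}$ under extensions is only needed to invoke Proposition \ref{Desigualdad} for the other direction; Proposition \ref{FinitistaFrobenius} itself does not require it, because it uses Proposition \ref{DimensionCoincide} applied to modules $T \in \Flat(R)^{\gorro}$ whose character modules lie in $\Proj(R^{\op})^{\gorro}$, so that $\pd(T^{+}) < \infty$.)

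Combining the two inequalities yields the equality $l.\mathrm{FFD}(R) = l.\mathrm{FGFD}_{(\Flat,\A)}(R)$. There is no real obstacle in this proof; the content is entirely front-loaded into the preceding two propositions, and the corollary is a routine packaging statement. The only point worth flagging for the reader is that the containment $(\Flat(R)^{\gorro})^{+} \subseteq \Proj(R^{\op})^{\gorro}$ is the key hypothesis that allows the character-module trick in Proposition \ref{DimensionCoincide} to upgrade finiteness of $\fd(T)$ into finiteness of $\pd(T^{+})$, which in turn drives the nontrivial inequality.
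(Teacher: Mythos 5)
Your proposal is correct and is exactly the paper's argument: the corollary is obtained by combining Proposition \ref{Desigualdad} (using closure of $\GF_{(\Flat,\A)}$ under extensions) with Proposition \ref{FinitistaFrobenius} (using GI-admissibility of $(\A,\Inj(R^{\op}))$ and the containment $(\Flat(R)^{\gorro})^{+}\subseteq \Proj(R^{\op})^{\gorro}$). Your side remark about which hypothesis feeds which inequality is also accurate and consistent with the paper's structure.
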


\begin{proof}
It follows from Propositions \ref{Desigualdad} and  \ref{FinitistaFrobenius}.
\end{proof}
Note that  similar remark to Remark \ref{GlobalFinitista} can be made from the previous corollary but with slightly different conditions.

 \section{Balance situations.} \label{S5}
Sometimes is possible to consider a duality pair $(\Le, \A)$ in $\Modu (R)$ such that $(_{R^{\op}} \Le, \A _{R^{\op}})$ is also a duality pair, but in $\Modu (R^{\op})$. This is the case in \cite[\S 4]{Wang20}, when it is established that $(\Le, \A)$ will be a bi-complete duality pair over $\Modu (R)$ if $(\Le, \A)$ is a bi-complete duality pair in $\Modu (R)$ and also $(_{R^{\op}} \Le, \A _{R^{\op}})$ is a bi-complete duality pair in $\Modu (R^{\op})$ (see \cite[Example 4.3]{Wang20} for examples). In what follows this will be the meaning of a bi-complete duality pair, thus we will use freely the results developed in \cite[\S 4]{Wang20}.  As discussed above, for a duality pair $(\Le, \A)$ in $\Modu (R)$, sometimes is possible to consider the class $\Le$ as one in $\Modu (R^{\op})$, and $\A$ as one in $\Modu (R)$. We use  this property within the hypotheses of the following results mentioning it explicitly in each case.\\
Let us consider the classes of $R$-modules $\GF _{(\Flat, \A)}$ and  $R^{\op}$-modules $\GI _{(\A, \Inj (R^{\op}))}$ relative to a pair of duality $(\Le, \A)$. We know that if $(\Le, \A)$ is a perfect or semi-complete duality pair, then  $(\Proj (R), \Le)$ is GP-admissible $\Modu (R)$, thus by \cite[Theorem 4.2 (a)]{BMS} there exist a left proper $\GP _{(\Proj , \Le)}$-resolution for each $M \in \GP _{(\Proj , \Le)} ^{\gorro}$. 
 Let's see now that if $\A$ can be considered in $\Modu (R)$ and $\Le$ in $\Modu (R^{\op})$, then $- \otimes_R - $ is balanced over $\GP _{(\Proj (R^{\op}), \Le)} ^{\gorro} \times \GP _{(\Proj , \Le)} ^{\gorro}  $ by $\GP _{(\Proj (R^{\op}) , \Le)} \times \GP _{(\Proj , \Le)}  $. We will need the following Lemma.
 
 \begin{lem}\label{LemaBalance} Lets take $\Le, \A \subseteq \Modu (R)$ a pair of classes such that $(\Proj (R), \Le)$ is a GP-admissible pair. If  $M \in \GP _{(\Proj , \Le)} ^{\gorro}$,  then any left proper $ \GP _{(\Proj , \Le)}$-resolution of $M$ is $\Hom _{R} (-,\GI _{(\A, \Inj)})$-acyclic.
   \end{lem}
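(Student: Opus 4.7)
The plan is to reduce the claim to an $\Ext$-vanishing statement between $\GP_{(\Proj,\Le)}$ and $\GI_{(\A,\Inj)}$, and then establish that vanishing by pairing the complete projective resolution defining an object of $\GP_{(\Proj,\Le)}$ with the complete injective coresolution defining an object of $\GI_{(\A,\Inj)}$.

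First I would let $\mathbf{G}(M):\cdots\to G_1\to G_0\to M\to 0$ be the given left proper $\GP_{(\Proj,\Le)}$-resolution and fix $I\in\GI_{(\A,\Inj)}$. Writing $K_i:=\ker(G_i\to G_{i-1})$ with $K_{-1}:=M$, the short exact sequences $0\to K_i\to G_i\to K_{i-1}\to 0$ feed, via $\Hom_R(-,I)$ and the associated long exact sequences of $\Ext$, into a standard dimension-shift that reduces the acyclicity of $\Hom_R(\mathbf{G}(M),I)$ to the two conditions $\Ext^{\geq 1}_R(G,I)=0$ for every $G\in\GP_{(\Proj,\Le)}$ and $\Ext^{\geq 1}_R(K_{i-1},I)=0$ for every $i\geq 0$. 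The second condition will follow from the first by induction on the (finite) $\GP_{(\Proj,\Le)}$-resolution dimension of the $K_{i-1}$, which exists because $M\in\GP^{\gorro}_{(\Proj,\Le)}$ and the class $\GP_{(\Proj,\Le)}$ is closed under kernels of epimorphisms by the BMS theory.

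The crux is thus to prove $\Ext^{\geq 1}_R(G,I)=0$. For this I would take an exact complex of projectives $\mathbf{P}$ with $G=Z_0(\mathbf{P})$ and $\Hom_R(\mathbf{P},L)$ exact for every $L\in\Le$, and an exact complex of injectives $\mathbf{J}$ with $I=Z_0(\mathbf{J})$. The right half of $\mathbf{J}$ is an injective coresolution of $I$ whose cosyzygies $I^k$ lie again in $\GI_{(\A,\Inj)}$, so a dimension-shift across this coresolution converts $\Ext^{i}_R(G,I)$ into $\Ext^1_R(G,I^{i-1})$; and applying $\Hom_R(G,-)$ to $0\to I^{i-1}\to J^{i-1}\to I^i\to 0$ identifies the latter with $\mathrm{coker}(\Hom_R(G,J^{i-1})\to \Hom_R(G,I^i))$. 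Each $J^{i-1}$ lies in $\Le$ (see below), so this surjectivity reduces to acyclicity of $\Hom_R(\mathbf{P},J^{i-1})$ in the relevant degree, which is supplied by the defining property of $\mathbf{P}$.

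The main obstacle I expect is securing $\Inj(R)\subseteq\Le$: the GP-admissibility of $(\Proj(R),\Le)$ forces $\Proj(R)\subseteq\Le$ (since $\omega=\Proj(R)\cap\Le$ must be a relative cogenerator in $\Proj(R)$) but does not by itself place injectives inside $\Le$. In every setting where this lemma is later applied in the paper (typically when $(\Le,\A)$ is a semi-complete or bi-complete duality pair, for which $\Inj\subseteq\Le$ holds by Proposition \ref{Perfect01} and the subsequent discussion) this containment is available from the ambient structure, and once it is in hand the reduction of paragraph two together with the $\Ext$-vanishing of paragraph three combine to deliver the desired $\Hom_R(-,\GI_{(\A,\Inj)})$-acyclicity.
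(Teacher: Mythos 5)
Your reduction hinges on the orthogonality $\Ext^{\geq 1}_R(G,I)=0$ for every $G\in\GP_{(\Proj,\Le)}$ and $I\in\GI_{(\A,\Inj)}$, and this is false in the generality of the lemma, even when $\Inj(R)\subseteq\Le$. Take $R=k[x]/(x^2)$ (quasi-Frobenius), $\Le=\Flat(R)=\Proj(R)=\Inj(R)$ and $\A=\Inj(R)$: the pair $(\Proj(R),\Le)$ is GP-admissible, every module lies in $\GP_{(\Proj,\Le)}$ and in $\GI_{(\A,\Inj)}$, yet $\Ext^1_R(k,k)\neq 0$. (The lemma itself survives in this example because proper $\GP_{(\Proj,\Le)}$-resolutions are then forced to be contractible; what fails is your intermediate claim, not the statement.) Correspondingly, your paragraph-three argument does not actually prove the orthogonality: acyclicity of $\Hom_R(\mathbf{P},J^{i-1})$ for $J^{i-1}\in\Le$ gives $\Ext^{\geq 1}_R(G,J^{i-1})=0$ and lifting of maps along $\mathbf{P}$, but it gives no way to lift a map $G\to I^{i}$ through the epimorphism $J^{i-1}\twoheadrightarrow I^{i}$; for that you would need the complex $\mathbf{J}$ to be $\Hom_R(G,-)$-acyclic, and its defining property only concerns $\Hom_R(\A,-)$. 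A second, independent gap is the descent step: even granting $\GP_{(\Proj,\Le)}\ortogonal\GI_{(\A,\Inj)}$, induction on $\Gpd_{(\Proj,\Le)}$ does not yield $\Ext^{1}_R(K,I)=0$ for modules $K$ of finite Gorenstein dimension (the $\Ext^1$ stage of the induction needs exactly the extension-of-maps statement you are trying to prove, so it is circular); orthogonality against a class never automatically passes to its resolution closure on the $\Ext^1$ level, and the quasi-Frobenius example again shows it genuinely fails here (take $K=M=k$, $I=k$).

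The paper's proof avoids both problems by exploiting a finer output of the Auslander--Buchweitz/BMS theory: since $M\in\GP_{(\Proj,\Le)}^{\gorro}$, there is a proper resolution built from special precovers $0\to K\to X\to M\to 0$ whose kernels lie in $\Proj(R)^{\gorro}$, i.e.\ have finite \emph{projective} dimension, not merely finite Gorenstein dimension (and, by uniqueness of proper resolutions up to homotopy, it suffices to treat this one). The only orthogonality then needed is $\Ext^{>0}_R(\Proj(R)^{\gorro},\GI_{(\A,\Inj)})=0$, which is elementary: shift along the left half of the complete injective resolution of $Y\in\GI_{(\A,\Inj)}$, using that the shift is eventually bounded by the finite projective dimension. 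The desired surjectivity of $\Hom_R(X,Y)\to\Hom_R(K,Y)$ is then obtained by a small diagram chase through $0\to Y'\to V\to Y\to 0$ with $V$ injective, showing the relevant connecting map vanishes without ever asserting $\Ext^1_R(M,Y)=0$. Note also that this route needs no hypothesis of the form $\Inj(R)\subseteq\Le$, which is consistent with the lemma being stated without one; the obstacle you flagged is a symptom of having chosen a reduction that demands more than the statement does.
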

   
   \begin{proof}
  Let $\mathbf{X}  (M) \to M $  be a left proper $\GP _{(\Proj , \Le)}$-resolution of $M$.  Since it breaks down in short exact sequences, is enough to show that $\Hom _{R} (\eta,Y)$ is exact for all $Y \in \GI _{(\A, \Inj (R))}$ and for all short exact sequence $\eta : 0 \to K \stackrel{i} \to X \to M \to 0$, where $X \to M $ is a special $\GP _{(\Proj , \Le)}$-precover for $M \in \GP _{(\Proj , \Le)}^{\gorro}$ with $K \in \Proj(R) ^{\gorro}$.    Therefore, we will prove that $\Hom _{R} (i,Y): \Hom _{R} (X,Y) \to \Hom _{R} (K,Y)$ is an epimorphism for all $Y \in \GI _{(\A, \Inj)}$.
   By definition of the class $\GI _{(\A, \Inj)}$ we know that for $Y \in \GI _{(\A, \Inj)}$ there is an exact sequence $\gamma :0 \to Y' \to V\stackrel{\rho}  \to Y  \to 0$, with $Y' \in \GI _{(\A, \Inj)}$ and $V \in \Inj (R)$. Consider the following commutative diagram:
   $$\xymatrix{
 \Hom _{\A} (X,V) \ar[r]^{(X,\rho)} \ar[d]^{(i,V)}  &  \Hom _{\A} (X,Y)  \ar[d]^{(i,Y)} \\
 \Hom _{\A} (K,V)  \ar[r]^{(K,\rho)} &  \Hom _{\A} (K,Y)  }$$
We know that $\Ext ^1 _{\A} (\GP _{(\Proj , \Le)}^{\gorro} , \Inj (R)) =0$, thus from the sequence $\eta$ we conclude that  $(i, V)$ is an epimorphism. It is enough to prove that  $\Ext ^1 _{R} (\Proj (R) ^{\gorro}, \GI _{(\A, \Inj)}) =0$, so from the sequence $\gamma$ we conclude that $(K, \rho)$ is an epimorphism. Therefore $(i , Y)$ is an epimorphism. Indeed take $N \in \Proj (R) ^{\gorro}$, then for $Y$ there is exact sequences $0 \to Y_{n+1} \to V_n \to Y_n  \to 0$ with $V_n \in \Inj (R)$ and $Y_n \in \GI _{(\A, \Inj)}$ for each $n \geq 0$ (with $Y_0 :=Y$). Applying $\Hom _R (N,-)$ have the exact sequence 
$$\Ext _{R} ^{i} (N,V_n) \to \Ext _{ R} ^{i} (N,Y_n) \to \Ext _{ R} ^{i+1} (N,Y_{n+1}) \to \Ext _{ R} ^{i+1} (N,V_n)$$ 
for all $i > 0$, with zero ends since $V_n \in \Inj (R)$, thus  $$\Ext _{ R} ^{i}(N,Y) \simeq \Ext _{ R} ^{i+n} (N, Y _{n}), \mbox{ for all } i>0 \mbox{ and for all } n\geq 0, $$
the term on the right-hand side is zero for  $i+n > \pd  (N)$, therefore  $\Ext _{ R} ^{i} (N,Y) =0$ for all $i>0$.
   \end{proof}

\begin{teo} \label{L-ABalan}
Let $R$ be a ring, $(\Le, \A)$ a perfect or semi-complete duality pair in $\Modu (R)$ such that $  \A $ can be considered in $\Modu (R)$, then the functor $\Hom _{R} (-,-)$ is right balanced over $\GP _{(\Proj , \Le)} ^{\gorro} \times \GI _{(\A, \Inj)} ^{\cogorro}$ by $\GP _{(\Proj , \Le)} \times \GI _{(\A, \Inj)}$.
\end{teo}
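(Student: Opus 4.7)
The plan is to verify the two defining conditions of right balance directly, using the GP-admissibility of $(\Proj(R),\Le)$ on the source side and the GI-admissibility of $(\A,\Inj(R))$ on the target side (the latter being available precisely because the hypothesis allows $\A$ to be regarded as a class in $\Modu(R)$). Once the appropriate resolutions and coresolutions are produced, the acyclicity statements are obtained from Lemma \ref{LemaBalance} and its dual.

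\textbf{Step 1 (left side).} Fix $M\in\GP_{(\Proj,\Le)}^{\gorro}$. Because $(\Le,\A)$ is perfect or semi-complete, Proposition \ref{Perfect01} (or the corresponding remark for the semi-complete case) gives $\Proj(R)\subseteq\Le$, and one checks, as indicated in Section \ref{S3}, that $(\Proj(R),\Le)$ is a GP-admissible pair in $\Modu(R)$. Applying \cite[Theorem 4.2 (a)]{BMS} to this pair yields a left proper $\GP_{(\Proj,\Le)}$-resolution
\begin{equation*}
\cdots \to X_1 \to X_0 \to M \to 0
\end{equation*}
of $M$. I would then invoke Lemma \ref{LemaBalance} to conclude that this complex is $\Hom_R(-,Y)$-acyclic for every $Y\in\GI_{(\A,\Inj)}$.

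\textbf{Step 2 (right side).} Now fix $Y\in\GI_{(\A,\Inj)}^{\cogorro}$. Here I would use the hypothesis that $\A$ can be regarded as a class in $\Modu(R)$: this is exactly what makes $(\A,\Inj(R))$ a GI-admissible pair in $\Modu(R)$ (the dual of the verification sketched in Section \ref{S3}). The dual of \cite[Theorem 4.2 (a)]{BMS} then produces a right proper $\GI_{(\A,\Inj)}$-coresolution
\begin{equation*}
0 \to Y \to E^{0} \to E^{1} \to \cdots
\end{equation*}
of $Y$. The dual of Lemma \ref{LemaBalance} (swapping projectives for injectives, $\GP_{(\Proj,\Le)}$ for $\GI_{(\A,\Inj)}$, and reversing all arrows) then shows that this coresolution is $\Hom_R(X,-)$-acyclic for every $X\in\GP_{(\Proj,\Le)}$. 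Combining Steps 1 and 2 gives precisely the definition of right balance of $\Hom_R(-,-)$ over $\GP_{(\Proj,\Le)}^{\gorro}\times\GI_{(\A,\Inj)}^{\cogorro}$ by $\GP_{(\Proj,\Le)}\times\GI_{(\A,\Inj)}$.

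\textbf{Expected obstacle.} The only non-routine point is the dual of Lemma \ref{LemaBalance}: one must verify that the proof of that lemma is genuinely self-dual in the present setting, i.e. that the roles played there by $\GI_{(\A,\Inj)}$-cogenerators and by vanishing of $\Ext_R^{i}(\Proj(R)^{\gorro},\GI_{(\A,\Inj)})$ can be transposed to cogenerators of $\GP_{(\Proj,\Le)}$ and to the vanishing of $\Ext_R^{i}(\GP_{(\Proj,\Le)},\Inj(R)^{\cogorro})$. This requires the GI-admissibility of $(\A,\Inj(R))$ in $\Modu(R)$, which is exactly what the hypothesis ``$\A$ can be considered in $\Modu(R)$'' secures; once this is in place the argument runs symmetrically to the proof of Lemma \ref{LemaBalance}. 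All remaining verifications (breaking the resolutions into short exact sequences and applying $\Hom_R$) are formal.
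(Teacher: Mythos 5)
Your proposal is correct and follows essentially the same route as the paper: the paper's proof likewise observes that a perfect or semi-complete duality pair makes $(\Proj(R),\Le)$ GP-admissible and $(\A,\Inj(R))$ GI-admissible (the latter using that $\A$ lives in $\Modu(R)$), obtains the proper resolutions and coresolutions from \cite[Theorem 4.2 (a)]{BMS} and its dual, and concludes by Lemma \ref{LemaBalance} and its dual. Your extra care about the dual of Lemma \ref{LemaBalance} is exactly the point the paper leaves implicit, and your verification matches its intended argument.
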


\begin{proof}
We know that  if $(\Le, \A)$ is perfect or semi-complete duality pair then the pairs $(\Proj , \Le)$,  $(\A, \Inj)$  are GP-admissible and GI-admissible respectively. The result follows from  Lemma \ref{LemaBalance} and its dual.
\end{proof}

 \begin{teo} \label{BalanF-P}
Lets take $\Le, \A \subseteq \Modu (R)$ a pair of classes such that $(\Proj (R), \Le)$ is a GP-admissible pair and  $ \mathbf{D} : \cdots \to G_1 \to G_0 \to M \to 0$ a left proper $\GP _{(\Proj , \Le)}$-resolution for the $R$-module $M \in \GP _{(\Proj , \Le)} ^{\gorro}$. Then the complex  $G \otimes _R \mathbf{D}$ is acyclic for all  $R^{\op}$-module $G \in \GF _{(\Flat (R^{\op}) , \A)} (R^{\op})$.
 \end{teo}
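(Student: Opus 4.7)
The plan is to reduce the required tensor-acyclicity to a Hom-acyclicity statement via character modules, and then invoke Lemma \ref{LemaBalance}. The point is that $\mathbb{Q}/\mathbb{Z}$ is an injective cogenerator of abelian groups, so a complex $C$ of abelian groups is acyclic if and only if $C^{+}$ is acyclic, and the standard adjunction $(G \otimes_R \mathbf{D})^{+} \cong \Hom_R(\mathbf{D}, G^{+})$ from \cite[Theorem 2.1.10]{EnJen00} is a natural isomorphism of complexes. So I want to argue that $G^{+}$ lies in $\GI_{(\A,\Inj(R))}$ and then apply Lemma \ref{LemaBalance}.

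First, I would establish that $G^{+} \in \GI_{(\A,\Inj(R))}$. Since $\A \subseteq \Modu(R)$ can equivalently be regarded as a class in $\Modu(R)$ acting on right $R$-modules via $A \otimes_R (-)$, this is the right-module analogue of Proposition \ref{PropositionF-I}: from an exact complex $\mathbf{F}$ of flat $R^{\op}$-modules with $\mathbf{F} \otimes_R A$ acyclic for all $A \in \A$, the isomorphism
\[
(\mathbf{F} \otimes_R A)^{+} \cong \Hom_R(A, \mathbf{F}^{+})
\]
together with Lambek's theorem shows that $\mathbf{F}^{+}$ is an exact complex of injective $R$-modules which is $\Hom_R(\A,-)$-acyclic. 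Hence $G^{+} = Z_0(\mathbf{F})^{+}$ lies in $\GI_{(\A, \Inj(R))}$.

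Next, I would apply Lemma \ref{LemaBalance}: since $(\Proj(R),\Le)$ is GP-admissible and $\mathbf{D}$ is a left proper $\GP_{(\Proj,\Le)}$-resolution of $M \in \GP_{(\Proj,\Le)}^{\gorro}$, the complex $\Hom_R(\mathbf{D}, Y)$ is acyclic for every $Y \in \GI_{(\A,\Inj(R))}$. Taking $Y := G^{+}$, the complex $\Hom_R(\mathbf{D}, G^{+})$ is acyclic, and therefore so is $(G \otimes_R \mathbf{D})^{+}$ by the natural isomorphism above. By the faithful exactness of $(-)^{+}$, the complex $G \otimes_R \mathbf{D}$ is itself acyclic.

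I do not expect any real obstacle in this argument beyond the mild bookkeeping of sides: one must check that the symmetric version of Proposition \ref{PropositionF-I} goes through (it does, since its proof is formal and uses only the isomorphism $(A \otimes_R F^{\bullet})^{+} \cong \Hom_R(A, (F^{\bullet})^{+})$ and Lambek's theorem), and that Lemma \ref{LemaBalance}, which was stated with $\A \subseteq \Modu(R)$, applies verbatim to our hypothesis. Once these formalities are in place, the proof is essentially a two-line computation assembling the character-module trick with the relative $\Hom$-balance already obtained in Lemma \ref{LemaBalance}.
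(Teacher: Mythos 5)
Your proposal is correct and follows essentially the same route as the paper's own proof: dualize via $(-)^{+}$, use the natural isomorphism $(G \otimes_R \mathbf{D})^{+} \cong \Hom_R(\mathbf{D}, G^{+})$, place $G^{+}$ in $\GI_{(\A,\Inj)}$ via (the side-swapped version of) Proposition \ref{PropositionF-I}, and conclude with Lemma \ref{LemaBalance}. Your explicit attention to the left/right bookkeeping is if anything slightly more careful than the paper, which cites Proposition \ref{PropositionF-I} directly without comment on sides.
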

 
 \begin{proof}
We know that the complex  $G \otimes_R \mathbf{D} : \cdots \to G \otimes_R
 G_1 \to  G \otimes_R G_0 \to G \otimes_R X \to 0$ is acyclic if and only if $(G \otimes _R \mathbf{D}) ^{+} : 0  \to (G \otimes_R X )^{+}  \to  (G \otimes_R G_0)^{+}  \to (G   \otimes_R G_1)^{+}   \to \cdots  $ is acyclic. But the latter is naturally isomorphic to $\Hom_R  (\mathbf{D}, G  ^{+} ) : 0  \to \Hom _{R} (X,G^{+} )  \to  \Hom _{R} ( G_0, G^{+}  )   \to \Hom _{R} ( G_1, G^{+} )  \to \cdots  $ from the Proposition \ref{PropositionF-I} we have that the left $R$-module $G^{+ } \in \GI _{(\A, \Inj)} $, thus the last sequence is acyclic by  Lemma \ref{LemaBalance}. 
 \end{proof}
 
 \begin{cor} \label{ProjBalance}
Let $R$ be a ring, $(\Le, \A)$ a perfect or semi-complete duality pair in $\Modu (R)$ such that $\A ^{+} \subseteq \Le$ and that $  \A $ can be considered in $\Modu (R)$ and $\Le$ in $\Modu (R^{\op})$. Then $- \otimes _R -$ is left balanced over  $\GP _{(\Proj (R^{\op}), \Le)} ^{\gorro} \times \GP _{(\Proj (R) , \Le)} ^{\gorro}$ by $\GP _{(\Proj  (R^{\op}), \Le)}  \times \GP _{(\Proj , \Le)}$.
 \end{cor}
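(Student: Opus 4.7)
The plan is to verify directly the two acyclicity requirements in the definition of a left balanced functor $-\otimes_R-$ on the pair $\GP_{(\Proj(R^{\op}),\Le)}^{\gorro}\times \GP_{(\Proj(R),\Le)}^{\gorro}$, feeding the results already proved in Section~\ref{S5}. Since $(\Le,\A)$ is perfect or semi-complete, we get by the observation right after Proposition~\ref{Perfect01} that $(\Proj(R),\Le)$ is GP-admissible in $\Modu(R)$; the symmetric version of the hypothesis guarantees that $(\Proj(R^{\op}),\Le)$ is GP-admissible in $\Modu(R^{\op})$ as well. Hence for any $M\in\GP_{(\Proj(R),\Le)}^{\gorro}$ and any $N\in\GP_{(\Proj(R^{\op}),\Le)}^{\gorro}$, the Auslander--Buchweitz machinery (\cite[Theorem 4.2~(a)]{BMS}) supplies left proper $\GP$-resolutions $\mathbf{D}_M\to M$ and $\mathbf{D}'_N\to N$ on each side. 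These will be the candidate left resolutions needed for the balance.

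Next I would combine the hypothesis $\A^{+}\subseteq\Le$ with Proposition~\ref{ProyectivosPlanos} to obtain the key inclusion
\[
\GP_{(\Proj(R^{\op}),\Le)}\subseteq\GF_{(\Flat(R^{\op}),\A)},\qquad \GP_{(\Proj(R),\Le)}\subseteq\GF_{(\Flat(R),\A)},
\]
one inclusion in $\Modu(R^{\op})$ and the other in $\Modu(R)$; both applications of Proposition~\ref{ProyectivosPlanos} are legitimate because $\A$ is allowed to be considered in both $\Modu(R)$ and $\Modu(R^{\op})$ and the same inclusion $\A^{+}\subseteq\Le$ holds on either side (symmetric duality pair).

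With these ingredients I would close the argument as follows. For the second factor, let $N\in\GP_{(\Proj(R^{\op}),\Le)}^{\gorro}$ and let $P\in\GP_{(\Proj(R),\Le)}$. By the inclusion above, $P\in\GF_{(\Flat(R),\A)}$, so the dual (right-module) version of Theorem~\ref{BalanF-P} applied to the left proper resolution $\mathbf{D}'_N\to N$ yields that $\mathbf{D}'_N\otimes_R P$ is acyclic. Symmetrically, for $M\in\GP_{(\Proj(R),\Le)}^{\gorro}$ and $Q\in\GP_{(\Proj(R^{\op}),\Le)}$, one gets $Q\in\GF_{(\Flat(R^{\op}),\A)}$ and Theorem~\ref{BalanF-P} gives directly that $Q\otimes_R\mathbf{D}_M$ is acyclic. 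These two acyclicity statements are exactly the two conditions required for left balance.

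The main obstacle is purely bookkeeping rather than mathematical: one must make sure that the hypothesis $\A^{+}\subseteq\Le$ and the ability to regard $\A$ (resp.\ $\Le$) in both module categories are being invoked consistently on both sides, so that Proposition~\ref{ProyectivosPlanos} truly applies over $R$ and over $R^{\op}$. Once that is checked, no new computation is needed: Theorem~\ref{BalanF-P} (and its right-handed analogue) does all the work, and the existence of the required left proper resolutions is the GP-admissibility consequence already recorded. Hence the corollary follows by assembling Proposition~\ref{ProyectivosPlanos} and Theorem~\ref{BalanF-P} in tandem on the two sides.
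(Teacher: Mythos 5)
Your proposal is correct and follows essentially the same route as the paper, whose proof of this corollary is simply the citation of Theorem \ref{BalanF-P} together with Proposition \ref{ProyectivosPlanos}. You have merely made explicit what that citation compresses: the existence of left proper $\GP$-resolutions on both sides from GP-admissibility, the inclusions $\GP_{(\Proj(R),\Le)}\subseteq\GF_{(\Flat(R),\A)}$ and $\GP_{(\Proj(R^{\op}),\Le)}\subseteq\GF_{(\Flat(R^{\op}),\A)}$ coming from $\A^{+}\subseteq\Le$, and the application of Theorem \ref{BalanF-P} and its opposite-ring analogue to get the two required acyclicity conditions.
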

 
 \begin{proof}
 It follows from Theorem \ref{BalanF-P} and Proposition \ref{ProyectivosPlanos}.
 \end{proof}
 
 We are also interested in considering the balance conditions of the functor $- \otimes _R - $ over $\Modu (R^{\op}) \times \Modu(R)$ by  $\GF _{(\Flat (R^{\op}), \A)} \times \GF_{(\Flat, \A)}$. To do so, we will use some of the techniques employed in \cite{Yang}, where it is shown that for a Ding-Chen ring $R$ the functor  $- \otimes_R - $ is left balanced over $\Modu (R) \times \Modu(R)$ by   $\GF _{(\Flat, \Inj)} \times \GF_{(\Flat, \Inj)}$. Since the notion of $(\Le, \A)$-Gorenstein ring \cite{Wang20} is a generalization of AC-Gorenstein and Ding-Chen ring it is natural to ask whether on such rings the above-mentioned balance is given. We recall the notion of $(\Le, \A)$-Gorenstein ring.
 
 \begin{defi}\cite[Definition 4.2]{Wang20}
 Let $R$ a ring such that $(\Le, \A)$ is a bi-complete duality pair, and $m$ a nonnegative integer.
 \begin{enumerate}
 \item We say that $R$ is a \textbf{$(\Le, \A)$-Gorenstein ring} if $\resdim _{\A} (\Le) \leq m$. Equivalently, if $\coresdim_{\Le} (\A) \leq m$.
 \item $R$ is called \textbf{flat typed} $(\Le, \A)$-Gorenstein ring if $\resdim _{\A} (\Le), \fd (\A) \leq m$. 
 \end{enumerate}
 \end{defi}
 
 \begin{rk}
 Observe that if $R$ is a flat typed $(\Le, \A)$-Gorenstein ring with respect to  a bi-complete duality  pair  $(\Le, \A)$ and $\mathrm{l.w.Ggl} _{(\Flat(R), \A)}(R) < \infty$ then from \cite[Proposition 4.5]{Wang20} and Proposition \ref{EquivalenciaGlobal}, we have the equalities
 $$\mathrm{l.w.Ggl} _{(\Flat(R), \A)}(R) = \fd (\A) = \resdim _{\A} (\Le) = \coresdim_{\Le} (\A).$$
 \end{rk}
 
Now by   \cite[Thm 4.8]{Wang20} we know that in a $(\Le, \A)$-Gorenstein ring $R$ with respect to a bi-complete duality pair $(\Le, \A)$  the triple  
 
 $$( \GP _{(\Proj , \Le)} , \W , \GI _{(\A, \Inj)})$$ forms a  hereditary and complete cotorsion pair in $\Modu (R)$, this in particular tells us that $\GP _{(\Proj , \Le)} ^{\ortogonal}  = {^{\ortogonal} \GI _{(\A, \Inj)}}$. Furthermore, since $(\Le, \A)$  is in particular a symmetric duality pair, then the Proposition \ref{ProyectivosPlanos} implies that $\GP _{(\Proj , \Le)}  \subseteq \GF _{(\Flat, \A)}$ (see also \cite[Lemma 4.6 (1)]{Wang20}) which gives the containment 
 $$\GF _{(\Flat, \A)} ^{\ortogonal } \subseteq \GP _{(\Proj , \Le)}  ^{\ortogonal} = {^{\ortogonal} \GI _{(\A, \Inj)}}.$$

On the other hand,  J. Gillespie and  Alina I. \cite[Proposition 4.2]{Gill21} have recently proven that if $\GF _{(\Flat, \A)}$ is closed by extensions and $ \Inj (R^{\op}) \subseteq \A$ then
 $$(\GF _{(\Flat, \A)}, \GF _{(\Flat, \A)} ^{\ortogonal})$$
  form an hereditary and perfect  cotorsion pair in $\Modu (R)$.  From the latter it can be concluded that all $M \in \Modu (R)$ possesses a $\GF _{(\Flat, \A)}$-cover, this is, there is an exact sequence  $0 \to K \to G \to M$, with  $G \in  \GF _{(\Flat, \A)}$ and $K \in \GF _{(\Flat, \A)} ^{\ortogonal}$. From all this we can establish the following result. 
  
  \begin{lem} \label{BalanBiperfect}
   Let $R$ be a  $(\Le, \A)$-Gorenstein ring with respect to a bi-complete duality pair  $(\Le, \A)$  and for a $R$-module $M$ take a left proper $\GF _{(\Flat, \A)}$-resolution $ \mathbf{D} : \cdots \to G_1 \to G_0 \to M \to 0$. Then $G \otimes _R \mathbf{D}$ is acyclic for all  $R^{\op}$-module $G \in \GF _{(\Flat (R^{\op}), \A)}$.
  \end{lem}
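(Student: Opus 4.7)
My plan is to imitate the strategy of Theorem~\ref{BalanF-P} and Lemma~\ref{LemaBalance}, translating the tensor-acyclicity claim into a Hom-acyclicity claim via character modules and then exploiting the hereditary cotorsion triple afforded by the $(\Le,\A)$-Gorenstein hypothesis. The natural isomorphism $(G\otimes_R\mathbf{D})^{+}\cong\Hom_R(\mathbf{D},G^{+})$ from \cite[Theorem 2.1.10]{EnJen00}, together with the faithful exactness of $(-)^{+}$, reduces the claim to showing that $\Hom_R(\mathbf{D},G^{+})$ is acyclic. Because $(\Le,\A)$ is bi-complete, Proposition~\ref{PropositionF-I} applies on the $R^{\op}$-side and places $G^{+}$ in $\GI_{(\A,\Inj(R))}$.

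Any two left proper $\GF_{(\Flat,\A)}$-resolutions of $M$ are homotopy equivalent as augmented complexes by the standard comparison theorem for proper resolutions, so I would replace $\mathbf{D}$ by the particular resolution $\mathbf{D}_c:\cdots\to G_1\to G_0\to M\to 0$ built inductively from the complete hereditary cotorsion pair $(\GF_{(\Flat,\A)},\GF_{(\Flat,\A)}^{\ortogonal})$ of \cite[Proposition 4.2]{Gill21}, whose successive kernels $K_n$ all lie in $\GF_{(\Flat,\A)}^{\ortogonal}$. The inclusion $\GP_{(\Proj,\Le)}\subseteq\GF_{(\Flat,\A)}$ from Proposition~\ref{ProyectivosPlanos}, combined with the hereditary cotorsion triple $(\GP_{(\Proj,\Le)},\W,\GI_{(\A,\Inj)})$ of \cite[Theorem 4.8]{Wang20}, then forces $K_n\in\W$. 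Splitting $\mathbf{D}_c$ into the short exact sequences $0\to K_n\to G_n\to K_{n-1}\to 0$ (with $K_{-1}:=M$), I can identify each cohomology $H^{n}(\Hom_R(\mathbf{D}_c,G^{+}))$ for $n\geq 2$ with a subgroup of $\Ext^{1}_R(K_{n-2},G^{+})$, and this Ext-group vanishes by the orthogonality $\W\,\ortogonal\,\GI_{(\A,\Inj)}$ of the cotorsion pair $(\W,\GI_{(\A,\Inj)})$.

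The delicate position is $H^{1}$, where the controlling Ext-group is $\Ext^{1}_R(M,G^{+})$ and involves the arbitrary module $M$. To deal with it I would recycle the diagram chase of Lemma~\ref{LemaBalance}: GI-admissibility of $(\A,\Inj(R))$ in $\Modu(R)$ yields a short exact sequence $0\to Y'\to V\to G^{+}\to 0$ with $V\in\Inj(R)$ and $Y'\in\GI_{(\A,\Inj(R))}$, and the commutative square of induced Hom-maps attached to $0\to K_0\to G_0\to M\to 0$ contains two maps that are automatically surjective, namely $\Hom_R(G_0,V)\to\Hom_R(K_0,V)$ (because $V$ is injective) and $\Hom_R(K_0,V)\to\Hom_R(K_0,G^{+})$ (because $\Ext^{1}_R(K_0,Y')=0$, using $K_0\in\W$, $Y'\in\GI_{(\A,\Inj)}$ and $\W\,\ortogonal\,\GI_{(\A,\Inj)}$). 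A direct chase then forces $\Hom_R(G_0,G^{+})\to\Hom_R(K_0,G^{+})$ to be an epimorphism, which is exactly $H^{1}(\Hom_R(\mathbf{D}_c,G^{+}))=0$; together with the automatic vanishing of $H^{-1}$ and $H^{0}$ coming from the right-exactness of the tensor product, this produces the acyclicity of $\Hom_R(\mathbf{D}_c,G^{+})$, and the homotopy equivalence transports the conclusion to the given $\mathbf{D}$.

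The main obstacle will be precisely this $H^{1}$ step, because it is the unique position where the syzygy is the arbitrary module $M$ rather than a kernel sitting in $\W$. Overcoming it forces the simultaneous use of the injective cogeneration of $\GI_{(\A,\Inj)}$ inside $\Inj(R)$, which furnishes the $V$ needed for the chase, and of the full strength of the bi-complete $(\Le,\A)$-Gorenstein hypothesis, packaged in the orthogonality $\W\,\ortogonal\,\GI_{(\A,\Inj)}$; neither ingredient alone would suffice.
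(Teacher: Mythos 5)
Your proposal is correct and follows essentially the same route as the paper's proof: reduce via the natural isomorphism $(G\otimes_R\mathbf{D})^{+}\cong\Hom_R(\mathbf{D},G^{+})$ and Proposition \ref{PropositionF-I} to Hom-acyclicity against $G^{+}\in\GI_{(\A,\Inj)}$, work with a (special) proper resolution whose syzygies lie in $\GF_{(\Flat,\A)}^{\ortogonal}\subseteq\GP_{(\Proj,\Le)}^{\ortogonal}={}^{\ortogonal}\GI_{(\A,\Inj)}$ by Proposition \ref{ProyectivosPlanos} and Wang--Di's cotorsion triple, and conclude with the diagram chase through an exact sequence $0\to\tilde{G}\to I\to G^{+}\to 0$ with $I$ injective. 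The only deviation is organizational: you kill the higher cohomologies directly via $\Ext^1_R(K_{n-2},G^{+})=0$ and reserve the chase for the degree-one position, whereas the paper runs that same chase at the (pre)cover step.
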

  
  \begin{proof}
  From \cite[Corollary 5.3]{Gill21} the class $\GF_{(\Flat, \A)}$ is closed under extensions. Now, since $(\Le, \A)$ is in particular a perfect duality pair then $\Inj (R^{\op}) \subseteq \A$ \cite[Theorem 2.3]{Gill19} and $\GF _{(\Flat, \A)}$ is closed under extensions, from what has been said above, is enough to consider the short exact sequence  $0 \to K \to G \to M \to 0$ with $K \in  \GF _{(\Flat , \A)} ^{\ortogonal} $ and $G \in  \GF _{(\Flat , \A)}$, and prove that for all  $G \in  \GF _{(\Flat , \A)}$ the sequence $0 \to G \otimes K \to G \otimes_R  G_0 \to G \otimes _R  M \to 0$ is exact. Note that such sequence is exact if and only if the sequence 
  $$0 \to (G \otimes _R M) ^{+ } \to (G \otimes _R G_0) ^{+ } \to (G \otimes _R K) ^{+ } \to 0 $$ is exact. But the latter is naturally isomorphic to the sequence 
  $$0 \to \Hom _R (M, G^{+ }) \to \Hom _R ( G_0  , G^{+ } )  \to \Hom _R (K, G^{+ } ) \to 0.$$ 
  Since  $G \in \GF _{(\Flat (R^{\op}) , \A)}$ is a right $R$-module, then  by Proposition \ref{PropositionF-I} we have that  $G^{+} \in \GI _{( \A, \Inj)}$ is a left $R$-module. Thus there is an exact sequence $0 \to \tilde{G} \to I \to G^{+} \to 0$ with $I \in \Inj (R)$ y $\tilde{G} \in  \GI _{( \A, \Inj)}$. Consider the following exact and commutative diagram
  
  $$\xymatrix{  & \Hom _R  (G_0, \tilde{G})\ar@{^{(}->}[d]  & \Hom _R  (K, \tilde{G}) \ar@{^{(}->}[d]& \\
  \Hom _R  (M, I)  \ar@{^{(}->}[r] & \Hom _R  (G_0, I) \ar[r]\ar[d] & \Hom _R  (K, I)  \ar[r]\ar[d] & \Ext ^1_R (M, I)  =0\\
  \Hom _R  (M, G^{+ }) \ar@{^{(}->}[r] & \Hom _R  (G_0, G^{+ })  \ar[r]  & \Hom _R  (K, G^{+ }) \ar[d]
& \\
   &   & \Ext ^1 _R (K, \tilde{G})  &
}$$

 Thus, to prove that $\Hom _R ( G_0  , G^{+ } )  \to \Hom _R (K, G^{+} )$  is an epimorphism, it is enough to see that $\Ext ^1 _R (K, \tilde{G}) =0$. To this end, let us remember that $K \in  \GF _{(\Flat , \A)} ^{\ortogonal}$,  $\tilde{G} \in  \GI _{( \A, \Inj)}$ and note that $\GF _{(\Flat, \A)} ^{\ortogonal } \subseteq \GP _{(\Proj , \Le)}  ^{\ortogonal} = {^{\ortogonal} \GI _{(\A, \Inj)}}$. 
  \end{proof}
 
We can declare the following results.
  
  \begin{pro} \label{BalanWhole}
   Let $R$ be a  $(\Le, \A)$-Gorenstein ring with respect to a bi-complete duality pair  $(\Le, \A)$. Then the functor $-\otimes _R -$ is left balanced over $\Modu (R ^{\op}) \times \Modu (R)$ by $\GF _{(\Flat (R^{\op}), \A)} \times \GF _{(\Flat, \A)}$. 
  \end{pro}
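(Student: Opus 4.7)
The plan is to verify both conditions in the definition of a left balanced functor by invoking Lemma \ref{BalanBiperfect} twice, once on each side. Concretely, for any $N \in \Modu(R^{\op})$ and $M \in \Modu(R)$, I need to exhibit left $\GF_{(\Flat(R^{\op}), \A)}$-resolutions of $N$ and left $\GF_{(\Flat, \A)}$-resolutions of $M$ such that tensoring the first with any $\GF_{(\Flat, \A)}$-module and tensoring the second with any $\GF_{(\Flat(R^{\op}), \A)}$-module both remain acyclic.

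First I would address existence of such resolutions. By \cite[Corollary 5.3]{Gill21}, since $(\Le, \A)$ is a bi-complete (hence in particular semi-complete) duality pair on both sides, the pair $(\GF_{(\Flat, \A)}, \GF_{(\Flat, \A)}^{\ortogonal})$ is a perfect cotorsion pair in $\Modu(R)$, and symmetrically $(\GF_{(\Flat(R^{\op}), \A)}, \GF_{(\Flat(R^{\op}), \A)}^{\ortogonal})$ is a perfect cotorsion pair in $\Modu(R^{\op})$. In particular, $\GF_{(\Flat, \A)}$ is a precovering class in $\Modu(R)$ and $\GF_{(\Flat(R^{\op}), \A)}$ is a precovering class in $\Modu(R^{\op})$. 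Iterating the special precovers (whose kernels lie in the respective right orthogonals) produces a left proper $\GF_{(\Flat, \A)}$-resolution $\mathbf{D}(M) \to M$ for every $M \in \Modu(R)$ and a left proper $\GF_{(\Flat(R^{\op}), \A)}$-resolution $\mathbf{E}(N) \to N$ for every $N \in \Modu(R^{\op})$.

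Next I apply Lemma \ref{BalanBiperfect} to $\mathbf{D}(M)$: it yields that $G \otimes_R \mathbf{D}(M)$ is acyclic for every $G \in \GF_{(\Flat(R^{\op}), \A)}$. For the other half of the balance, I would invoke the fully symmetric version of Lemma \ref{BalanBiperfect} with the roles of $R$ and $R^{\op}$ interchanged. Since bi-completeness of $(\Le, \A)$ is designed to be a two-sided hypothesis in the sense of \cite[\S 4]{Wang20}, meaning that $(\Le, \A)$ being a bi-complete duality pair on $\Modu(R)$ together with $({}_{R^{\op}}\Le, \A_{R^{\op}})$ being bi-complete on $\Modu(R^{\op})$, the entire proof of Lemma \ref{BalanBiperfect} applies verbatim after swapping sides; one gets that $\mathbf{E}(N) \otimes_R H$ is acyclic for every $H \in \GF_{(\Flat, \A)}$.

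The expected main obstacle is merely the bookkeeping around the symmetry: one has to check that each ingredient used inside the proof of Lemma \ref{BalanBiperfect} (namely closure under extensions of the $\GF$-classes on both sides, the containment $\Inj \subseteq \A$ on both sides, the $(\Le, \A)$-Gorenstein cotorsion triple, and Proposition \ref{PropositionF-I} applied to the opposite ring) is available also in $\Modu(R^{\op})$. All of these are supplied by the definition of bi-complete duality pair and by \cite[\S 4]{Wang20}, so no new work beyond invoking the same references on the opposite side is required. Combining the two acyclicity statements gives both conditions in the definition of left balance and completes the proof.
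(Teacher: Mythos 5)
Your proposal is correct and follows essentially the same route as the paper: the paper's proof is precisely to invoke Lemma \ref{BalanBiperfect} on both sides, for $\GF_{(\Flat,\A)}$ in $\Modu(R)$ and for $\GF_{(\Flat(R^{\op}),\A)}$ in $\Modu(R^{\op})$, with the existence of the left proper resolutions coming from the perfect cotorsion pairs of \cite[Corollary 5.3]{Gill21} exactly as you note. Your additional bookkeeping about the two-sidedness of the bi-complete hypothesis is the same symmetry the paper relies on implicitly.
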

  \begin{proof}
  Follows from Lemma \ref{BalanBiperfect} applied for $\GF _{(\Flat , \A)}(R)$ and $\GF _{(\Flat  (R^{\op}), \A)}(R^{\op})$.
  \end{proof}

  \begin{pro}
  Let $R$ be a coherent ring (on both sides) with $l.\mathrm{FPD} (R) < \infty$,  $r.\mathrm{FPD} (R) < \infty$ and $(\Le, \A)$ a semi-complete duality pair in $\Modu (R)$ and such that $(_{R^{\op}} \Le, \A _{R^{\op}})$ also  semi-complete in  $\Modu (R^{\op})$, then the functor $- \otimes _R -$ is left balanced over  $ \GF _{(\Flat (R^{\op}), \A)} ^{\gorro} \times \GF _{(\Flat, \A)} ^{\gorro}$ by $ \GF _{(\Flat (R^{\op}), \A)} \times \GF _{(\Flat, \A)}$.
  \end{pro}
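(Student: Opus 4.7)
The plan is to adapt the proof of Lemma~\ref{BalanBiperfect} to the semi-complete duality-pair setting by replacing Wang's cotorsion triple (unavailable without the $(\Le,\A)$-Gorenstein ring hypothesis) with a combination of the perfect cotorsion pair from \cite[Corollary~5.3]{Gill21} and the finiteness of $l.\mathrm{FPD}(R),r.\mathrm{FPD}(R)$. The hypotheses that $(\Le,\A)$ is semi-complete in $\Modu(R)$ and that $(_{R^{\op}}\Le,\A_{R^{\op}})$ is semi-complete in $\Modu(R^{\op})$ yield, via \cite[Remark~2.6]{Gill21}, the containments $\Inj(R^{\op})\subseteq\A$ and $\Inj(R)\subseteq\A_{R^{\op}}$; together with \cite[Corollary~5.3]{Gill21} these imply that $\GF_{(\Flat,\A)}$ and $\GF_{(\Flat(R^{\op}),\A)}$ are each closed under extensions and form the left halves of perfect, hereditary cotorsion pairs in their respective categories. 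In particular Theorem~\ref{AB4} applies on both sides.

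For the candidate resolution, take $M\in\GF_{(\Flat,\A)}^{\gorro}$ with $\Gfd_{(\Flat,\A)}(M)=n$. Theorem~\ref{AB4}(a) and (b)(i) produce a short exact sequence
$$0\to K\to X\to M\to 0,$$
with $X\in\GF_{(\Flat,\A)}$, $K\in\GF_{(\Flat,\A)}^{\ortogonal}$ and $\resdim_{\Flat}(K)=n-1$. Splicing with a length-$(n{-}1)$ flat resolution of $K$ gives a length-$n$ left $\GF_{(\Flat,\A)}$-resolution $\mathbf{D}\to M$; the mirror construction in $\Modu(R^{\op})$, based on the second semi-complete duality pair and $r.\mathrm{FPD}(R)<\infty$, yields a left $\GF_{(\Flat(R^{\op}),\A)}$-resolution $\mathbf{E}\to G$ for each $G\in\GF_{(\Flat(R^{\op}),\A)}^{\gorro}$. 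The finiteness of both lengths is underwritten by the finite $\mathrm{FPD}$'s via Proposition~\ref{Desigualdad}.

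Next I would verify that $\mathbf{D}$ is tensor-proper with respect to $\GF_{(\Flat(R^{\op}),\A)}$. Fix $G\in\GF_{(\Flat(R^{\op}),\A)}$; a dimension-shift along the totally $(-\otimes_R\A)$-acyclic flat coresolution of $G$ shows $\Tor^R_i(G,K')=0$ for every $K'\in\Flat(R)^{\gorro}$ and every $i\geq 1$, since the right syzygies of $G$ in its totally acyclic complex stay inside $\GF_{(\Flat(R^{\op}),\A)}$ while $\Tor$ against a flat vanishes; this takes care of the flat tail of $\mathbf{D}$. By Lambek and Proposition~\ref{PropositionF-I}, the remaining tensor-exactness of $0\to K\to X\to M\to 0$ is equivalent to $\Ext^1_R(M,G^+)=0$ with $G^+\in\GI_{(\A,\Inj(R))}$, and the long exact $\Ext$-sequence combined with $\Ext^1_R(K,G^+)\cong\Tor^R_1(G,K)^+=0$ from the previous step reduces the problem to the vanishing $\Ext^1_R(X,G^+)=0$ for $X\in\GF_{(\Flat,\A)}$.

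This last Ext-vanishing is the main obstacle: in Lemma~\ref{BalanBiperfect} it was handed to us by the inclusion $\GF^{\ortogonal}\subseteq {}^{\ortogonal}\GI$ coming from Wang's cotorsion triple, whereas here it has to be recovered from the semi-complete cotorsion pair together with the finite $\mathrm{FPD}$'s. I would dispatch it by iterating simultaneously along an injective coresolution of $G^+$ inside $\GI_{(\A,\Inj(R))}$ and along Auslander--Buchweitz approximations of $X$ whose successive kernels have finite flat dimension (via Theorem~\ref{AB4}), applying the character-module identity and the finite-flat-dimension Tor-vanishing at each stage, and invoking $r.\mathrm{FPD}(R)<\infty$ to terminate the iteration. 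A strictly symmetric argument using $l.\mathrm{FPD}(R)<\infty$ handles the second variable, giving the left balance of $-\otimes_R-$ over $\GF_{(\Flat(R^{\op}),\A)}^{\gorro}\times\GF_{(\Flat,\A)}^{\gorro}$ by $\GF_{(\Flat(R^{\op}),\A)}\times\GF_{(\Flat,\A)}$.
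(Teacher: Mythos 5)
Your first two steps coincide with the paper's: closure under extensions of both classes via \cite[Corollary 5.3]{Gill21}, finite left proper $\GF_{(\Flat,\A)}$-resolutions from Theorem \ref{AB4} whose syzygies $K$ lie in $\Flat(R)^{\gorro}$, and the vanishing $\Tor^R_{\geq 1}(G,K)=0$ for $G\in\GF_{(\Flat(R^{\op}),\A)}$ and $K$ of finite flat dimension. The gap is in your key exactness step. Exactness of $0\to G\otimes_R K\to G\otimes_R X\to G\otimes_R M\to 0$ is \emph{not} equivalent to $\Ext^1_R(M,G^+)=0$: it only requires that every homomorphism $K\to G^+$ extend along $K\hookrightarrow X$ (equivalently, that $\Tor_1^R(G,X)\to\Tor_1^R(G,M)$ be surjective), whereas $\Ext^1_R(M,G^+)\cong\Tor_1^R(G,M)^+$ is in general nonzero for $M$ of finite $(\Flat,\A)$-Gorenstein flat dimension. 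Worse, your final reduction to $\Ext^1_R(X,G^+)=0$ for $X\in\GF_{(\Flat,\A)}$ and $G^+\in\GI_{(\A,\Inj)}$ asks for an orthogonality between the relative Gorenstein flat and Gorenstein injective classes that fails under the very hypotheses of the proposition: for a non-semisimple quasi-Frobenius ring such as $R=k[x]/(x^2)$ with the pair $(\Flat(R),\mathcal{FP}Inj(R^{\op}))$, the ring is two-sided noetherian with $l.\mathrm{FPD}(R)=r.\mathrm{FPD}(R)=0$, every module is $(\Flat,\A)$-Gorenstein flat and every character module is $(\A,\Inj)$-Gorenstein injective, yet $\Ext^1_R(k,k^+)\cong\Ext^1_R(k,k)\neq 0$. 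So the iteration you propose to ``dispatch the main obstacle'' has a false target and cannot close; note also that even granting $\Ext^1_R(X,G^+)=0$ and $\Ext^1_R(K,G^+)=0$, the long exact sequence only identifies $\Ext^1_R(M,G^+)$ with the cokernel of $\Hom_R(X,G^+)\to\Hom_R(K,G^+)$, which does not give the surjectivity you need.

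The paper's proof needs much less and is precisely where the finitistic hypotheses enter. Given $f\colon K\to G^+$, choose $0\to\tilde G\to I\to G^+\to 0$ with $I\in\Inj(R)$ and $\tilde G\in\GI_{(\A,\Inj)}$ (possible since $G^+\in\GI_{(\A,\Inj)}$ by Proposition \ref{PropositionF-I}); lift $f$ to $K\to I$ using $\Ext^1_R(K,\tilde G)=0$, extend to $X\to I$ using $\Ext^1_R(M,I)=0$, and compose with $I\to G^+$ --- this is the diagram of Lemma \ref{BalanBiperfect}. The only nontrivial vanishing is $\Ext^1_R(K,\tilde G)=0$, and it holds because $K\in\Flat(R)^{\gorro}\subseteq\Proj(R)^{\gorro}$ by \cite[Proposition 6]{Jensen} together with $l.\mathrm{FPD}(R)<\infty$, while $\Ext^{1}_R(\Proj(R)^{\gorro},\GI_{(\A,\Inj)})=0$ was already established inside the proof of Lemma \ref{LemaBalance}. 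The hypotheses $r.\mathrm{FPD}(R)<\infty$ and two-sided coherence serve the strictly symmetric argument for the resolutions in $\Modu(R^{\op})$, not to terminate any iteration. Replacing your last paragraph (and the ``equivalence'' with $\Ext^1_R(M,G^+)=0$) by this lifting argument repairs the proof and recovers the paper's.
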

  
  \begin{proof}
 From \cite[Corollary 5.3]{Gill21} the classes $\GF _{(\Flat, \A)}$ and $\GF _{(\Flat (R^{\op}), \A)}$ are closed under extensions. Now from Theorem \ref{AB4} (b) there exist a left proper $\GF _{(\Flat, \A)} $-resolution for each $M \in \GF _{(\Flat, \A)}  ^{\gorro}$.  Observe that in the proof of Lemma \ref{LemaBalance} has been showed that $\Ext ^1 _{R} (\Proj (R) ^{\gorro} , \GI _{(\A, \Inj)}) =0$, thus the proof in  Lemma \ref{BalanBiperfect} can be adapted by observing that  $\Ext _R ^1 (K, \tilde{G}) =0$ since by \cite[Proposition 6]{Jensen} we have that $K \in \Flat (R) ^{\gorro} \subseteq \Proj (R) ^{\gorro}$.
  \end{proof}
 
\section{Applications} \label{S6}
We will prensent here some consequences of the theory developed above. Although several of the preceding results can be rewritten in terms of the following duality pairs, we have chosen only a few of those we believe to be most significant. We begin remember some notions, and next give examples of duality pairs some of which have been considered in \cite{HJ,Wang20}.\\

Let $R$ be an arbitrary ring and $n \in \mathbb{N}$.
\begin{enumerate}
\item Recall that an $R$-module $M$ is called \textbf{FP-injective} \cite{Gill10} if $\Ext ^{1} _{R} (N,M) =0$ for all finitely presented $R$-modules $N$. Denote by $\mathcal{FP}Inj (R)$ the class of all FP-injective $R$-modules, and by $\Flat (R)$ the class of all flat $R$-modules. The pair $(\Proj (R), \Flat (R))$ is GP-admissible, the pair  $(\mathcal{FP}Inj (R), \Inj (R))$ is GI-admissible, and the classes $\GP _{(\Proj (R), \mathrm{Flat} (R))}$ and $\GI_{(\mathcal{FP}Inj (R)(R), \Inj (R))}$ are called \textbf{Ding projective modules} and \textbf{Ding injective modules},  respectively (also denoted by $\DP(R)$ and $\DI(R)$, respectively).
\item An $R$-module $F$ is of \textbf{type FP$_{\infty}$}  \cite{BGH} if it has a projective resolution $\cdots \to P_1 \to P_0 \to F \to 0$, where each $P_i$ is finitely generated.  An $R$-module $A$ is \textbf{absolutely clean} if $\Ext ^{1} _{R} (F,A) =0$ for all $R$-modules $F$ of type FP$_{\infty}$. Also an $R$-module $L$ is \textbf{level} if $\mathrm{Tor} _{1} ^{R} (F,L) =0$ for all (right) $R$-module $F$ of type FP$_{\infty}$. We denote by $\mathrm{AC} (R)$  the class  of all absolutely clean $R$-modules, and by  $\mathrm{Lev} (R)$ the class of all level $R$-modules. The pair $(\Proj (R), \mathrm{Lev} (R))$ is GP-admissible and the pair  $(\mathrm{AC}(R), \Inj (R))$ is GI-admissible. The classes of $R$-modules $\GP _{(\Proj (R), \mathrm{Lev} (R))}$ and $\GI_{(\mathrm{AC}(R), \Inj (R))}$ are called \textbf{Gorenstein AC-projective} and \textbf{Gorenstein AC-injective}, respectively.  
\item $M \in \Modu (R)$ is called \textbf{finitely $n$-presented} if $M$ possesses a projective resolution $0 \to P_n \to \cdots \to P_1 \to P_0 \to M \to 0$ for which each $P_i$ is finitely generated free. We denote by $\mathcal{FP}_n(R)$ the class of all finite $n$-presented $R$-modules. A ring $R$ is said \textbf{left $n$-coherent} if $\mathcal{FP}_n(R) \subseteq \mathcal{FP}_{n+1}(R)$.
\item An $R$-module $E$ is called \textbf{$\mathcal{FP}_n$-injective} if for all $M \in \mathcal{FP}_n(R)$, $\Ext _R ^1 (M, E) =0$. Also an $R^{op}$-module $H$ is called \textbf{$\mathcal{FP}_n$-flat} if for all $M \in \mathcal{FP}_n(R)$, $\Tor _1 ^R (H, M) =0$.
\item Given an $R$-module $M$, is said that $M$ is  (resp. strongly) \textbf{$w$-$FI$-flat} if $\Tor _1 ^R (E, M) =0$ (resp. $\Tor _i ^R (E, M) =0$, for all $i>0$) for any absolutely $w$-pure $R^{\op}$-module $E$ (for details see \cite{Tamek}). Also is said that $M$ is (resp. strongly) \textbf{$w$-$FI$-injective} if $\Ext^{1} _R (E,M) =0$ (resp. $\Ext^{i} _R (E,M) =0$ for all $i>0$) for any $w$-absolutely pure $R$-module $E$. In a similar way is also possible to define (strongly) \textbf{$w$-$FI$-projective} $R$-modules.
\end{enumerate}

We recall examples of perfect and complete duality pairs.
\begin{ex} \label{Complete}
Let $R$ be a ring.
\begin{enumerate}
\item Take ($n \geq 2$), then by \cite[Theorems 5.5 and 5.6]{BP} the pair 
$$( \mathcal{FP}_n\mbox{-}Flat (R),\mathcal{FP}_n\mbox{-}Inj (R^{\op}))$$ 
of  $\mathcal{FP}_n$-flat  $R$-modules and $\mathcal{FP}_n$-injective $R^{\op}$-modules form a duality pair in $\Modu (R)$. And from \cite[Corollarry 3.7]{BGH} is complete duality pair for any ring $R$.

\item Let $R$ commutative and Noetherian with a semidualizing $R$-complex $C$. There is associated two associated classes called the Auslander class, denoted $\A _0 ^{C}$, and the Bass class, denoted $\B _0 ^C$. Then by
\cite[Proposition 2.4]{HJ} the pair $(\A_0 ^{C} , \B _{0} ^C)$ is a complete duality pair.
\item By \cite[Remark 1, Proposition 6]{Tamek} we have that the pair  of (strongly) $w$-$FI$-flat $R$-modules and (strongly) $w$-$FI$-injective $R^{\op}$-modules $$(w\mbox{-}FI\mbox{-}Flat (R), w\mbox{-}FI\mbox{-}Inj (R^{\op}))$$  is a perfect duality pair (resp. the pair  $(\mathcal{S}w\mbox{-}FI\mbox{-}Flat (R), \mathcal{S}w\mbox{-}FI\mbox{-}Inj (R^{\op}))$ is a perfect duality pair).
 \end{enumerate}
\end{ex}

From the Theorem \ref{L-ABalan} and the previous list of complete and perfect duality pairs, we have the following classes of $R$-modules that makes the $\Hom _R (-,-)$ right balanced.
\begin{itemize}
\item[$\bullet$]  For any ring $R$, the functor $\Hom _{R} (-,-)$ is right balanced over
 $$\GP _{(\Proj , \mathcal{FP}_n\mbox{-}Flat (R))} ^{\gorro} \times \GI _{(\mathcal{FP}_n\mbox{-}Inj (R), \Inj)} ^{\cogorro},$$ 
 by $\GP _{(\Proj , \mathcal{FP}_n\mbox{-}Flat (R))} \times \GI _{(\mathcal{FP}_n\mbox{-}Inj (R), \Inj)}.$
\item[$\bullet$]  For a  commutative and Noetherian ring $R$ with a semidualizing $R$-complex $C$, the functor $\Hom _{R} (-,-)$ is right balanced over $\GP _{(\Proj , \A_0 ^{C})} ^{\gorro} \times \GI _{(\B_0 ^{C}, \Inj)} ^{\cogorro}$ by $\GP _{(\Proj , \A_0 ^{C} )} \times \GI _{(\B_0 ^{C}, \Inj)}$
\item[$\bullet$] For any ring $R$, the functor $\Hom _{R} (-,-)$ is right balanced over 
$$\GP _{(\Proj , w\mbox{-}FI\mbox{-}Flat (R))} ^{\gorro} \times \GI _{(w\mbox{-}FI\mbox{-}Inj (R), \Inj)} ^{\cogorro},$$
 by $\GP _{(\Proj , w\mbox{-}FI\mbox{-}Flat (R) )} \times \GI _{(w\mbox{-}FI\mbox{-}Inj (R), \Inj)}$. Furthermore is right balanced over 
 $$\GP _{(\Proj , \mathcal{S}w\mbox{-}FI\mbox{-}Flat (R))} ^{\gorro} \times \GI _{(\mathcal{S}w\mbox{-}FI\mbox{-}Inj (R), \Inj)} ^{\cogorro},$$
 by $\GP _{(\Proj , \mathcal{S}w\mbox{-}FI\mbox{-}Flat (R) )} \times \GI _{(\mathcal{S}w\mbox{-}FI\mbox{-}Inj (R), \Inj)}$.
\end{itemize}

Now from Corollary \ref{ProjBalance} and Example \ref{Complete} we have the following classes of $R$-modules that makes the $-\otimes _R - $ left balanced.

\begin{itemize}
\item[$\bullet$]  For any ring $R$, the functor $-\otimes _R - $ left balanced over   $$\GP _{(\Proj (R^{\op}) , \mathcal{FP}_n\mbox{-}Flat (R^{\op}))} ^{\gorro} \times \GP _{(\Proj (R) , \mathcal{FP}_n\mbox{-}Flat (R))} ^{\gorro},$$ by $\GP _{(\Proj (R^{\op}) , \mathcal{FP}_n\mbox{-}Flat (R^{\op}))} \times \GP _{(\Proj (R) , \mathcal{FP}_n\mbox{-}Flat (R))} .$
\item[$\bullet$] For a  commutative and Noetherian ring $R$ with a semidualizing $R$-complex $C$ the functor $-\otimes _R - $ left balanced over 
$$\GP _{(\Proj , \A_0 ^{C})} ^{\gorro} \times \GP _{(\Proj , \A_0 ^{C})} ^{\gorro},$$
 by $\GP _{(\Proj , \A_0 ^{C})} \times \GP _{(\Proj , \A_0 ^{C})}$.
\end{itemize}

The following are examples of bi-complete duality pairs.

\begin{ex} \label{E-Bi-complete}
Let $R$ be a ring.
\begin{enumerate}
\item The pair $(\mathrm{Lev} (R), \mathrm{AC} (R^{\op}))$ forms a duality pair in $\Modu (R)$, since from \cite[Proposition 2.7 and 2.10]{BGH} $M \in \mathrm{Lev} (R)$ if and only if $M^{+} \in \mathrm{AC} (R^{\op})$, and both classes are closed under direct summands. Furthermore $\mathrm{Lev} (R)$ is projective resolving and closed under arbitrary coproducts and from \cite[Example 3.6]{Wang20} is a bi-complete duality pair.

\item From \cite[Theorem 2.1]{Fhouse} the pair $(\Flat (R), \mathcal{FP}Inj (R^{\op}))$ form a duality pair in $\Modu (R)$. If $R$ is a right coherent ring then from \cite[Theorem 2.2]{Fhouse} $(\mathcal{FP}Inj (R^{\op}) , \Flat (R))$ is a duality pair in $\Modu (R^{\op})$. Furthermore from \cite[Example 3.8 (1)]{Wang20} $(\Flat (R), \mathcal{FP}Inj (R^{\op}))$ is a bi-complete duality pair when $R$ is right coherent.

\item Now suppose that $R$ is  right $n$-coherent ($n \geq 2$), then from  \cite[Example 3.8 (2)]{Wang20} the pair $( \mathcal{FP}_n\mbox{-}Flat (R),\mathcal{FP}_n\mbox{-}Inj (R^{\op}))$ of  $\mathcal{FP}_n$-flat  $R$-modules and $\mathcal{FP}_n$-injective $R^{\op}$-modules is a bi-complete duality pair    when $R$ is right $n$-coherent.
 \end{enumerate}
\end{ex}

From Proposition \ref{BalanWhole} and Example \ref{E-Bi-complete} we have the following balance situations for $-\otimes _R -$, in the whole category $\Modu (R)$.
\begin{itemize}
\item[$\bullet$] For an \textit{AC-Gorenstein ring} $R$ \cite[Definition 4.1]{Gill18}  the functor $-\otimes _R - $ is left balanced over $\Modu (R^{\op}) \times \Modu (R)$ by  
$$ \GF _{(\Flat (R^{\op}), \mathrm{AC} (R^{\op}) )} \times \GF _{(\Flat , \mathrm{AC} (R))} $$
since from \cite[Example 4.3 (1)]{Wang20} the \textit{AC-Gorenstein rings} coincide with the  $(\mathrm{Lev (R), \mathrm{AC} (R^{\op})})$-Gorenstein rings.
\item[$\bullet$] For a right coherent and $(\Flat (R), \mathcal{FP}Inj (R^{\op}))$-Gorenstein ring $R$ the functor $-\otimes _R - $ is left balanced over $\Modu (R^{\op}) \times \Modu (R)$ by 
$$ \GF _{(\Flat (R^{\op}), \mathcal{FP}Inj (R^{\op}) )} \times \GF _{(\Flat , \mathcal{FP}Inj (R))}, $$
note that by \cite[Example 4.3 (3)]{Wang20} the $(\Flat (R), \mathcal{FP}Inj (R^{\op}))$-Gorenstein rings are exactly the Ding-Cheng rings. Thus this example recovers the result given in \cite[Theorem 3.23]{Yang}, since by \cite[Proposition 3.13]{Gill10} 
$$\GF _{(\Flat , \mathcal{FP}Inj (R))} = \GF _{(\Flat (R), \Inj (R)} .$$

\item[$\bullet$] For a Gorenstein $n$-coherent  ring $R$ ($n\geq 2$)  the functor $-\otimes _R - $ is left balanced over $\Modu (R^{\op}) \times \Modu (R)$ by 
$$ \GF _{(\Flat (R^{\op}), \mathcal{FP}_n\mbox{-}Inj (R^{\op}) )} \times \GF _{(\Flat , \mathcal{FP}_n\mbox{-}Inj (R))}, $$
this is true since by \cite[Example 4.3 (4)]{Wang20} the Gorenstein $n$-coherent  rings are exactly the flat-typed $( \mathcal{FP}_n\mbox{-}Flat (R),\mathcal{FP}_n\mbox{-}Inj (R^{\op})) $-Gorenstein rings.
\end{itemize}
We conclude this work by considering the situation when the Ding-projective $R$-modules  $\DP (R)$ coincide with the Gorenstein projective $R$-modules $\GP (R)$. By definition and without conditions on the ring $R$ it is known that $\DP (R) \subseteq \GP (R)$. Recently A. Iacob \cite{Alina20} has considered this question as part  of the tools developed  around the study of class $\GF _{(\Proj , \mathcal{B})} (R)$ where some occasions $\mathcal{B}$ corresponds to a  \textit{definable class}. It is known that over a Ding-Chen ring $R$ the class of Ding-projective $R$-modules coincides with the class of Gorenstein projective $R$-modules \cite[Theorem 1.1]{Gill17}, here we will see that the above also occurs when the finitistic projective dimension of $R$ in finite.

\begin{lem} \label{Ortogonalidad}
Let $R$ be a ring with $ \mathrm{LeftFPD} (R) < \infty$ and  $M \in \Modu (R)$. If for all  $m > 0 $, $\Ext  ^{m} _{R}(M,\Proj (R)) =0$, then   $\Ext ^m _R (M,\Flat (R) ^{\gorro}) =0$ for all $m> 0$.
\end{lem}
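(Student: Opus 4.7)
The plan is to reduce the statement to the analogous vanishing against modules of finite projective dimension, by first showing that under the hypothesis $\mathrm{LeftFPD}(R) < \infty$ the inclusion $\Flat(R)^{\gorro} \subseteq \Proj(R)^{\gorro}$ holds.

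Step one: invoke the classical theorem of Jensen \cite[Proposition 6]{Jensen} (the very same citation already used in the preceding proof in the paper), which states that when $\mathrm{LeftFPD}(R) < \infty$ every flat $R$-module has projective dimension bounded by $\mathrm{LeftFPD}(R)$; equivalently $\Flat(R) \subseteq \Proj(R)^{\gorro}$. Because $\Proj(R)^{\gorro}$ is a resolving class (in particular closed under extensions), a routine induction on the length of a finite flat resolution $0 \to F_n \to \cdots \to F_0 \to N \to 0$ upgrades this to $\Flat(R)^{\gorro} \subseteq \Proj(R)^{\gorro}$. Thus it suffices to prove the weaker implication: if $\Ext^m_R(M,\Proj(R))=0$ for all $m>0$, then $\Ext^m_R(M,N)=0$ for every $N$ with $\pd(N)<\infty$ and every $m>0$.

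Step two: fix such an $N$, let $k:=\pd(N)$, and pick a finite projective resolution
$$0\to P_k\to P_{k-1}\to\cdots\to P_0\to N\to 0.$$
Writing $\Omega^{0}N:=N$ and $\Omega^{i}N$ for the $i$th syzygy ($1\le i\le k$, with $\Omega^{k}N=P_{k}$), split the resolution into short exact sequences $0\to \Omega^{i+1}N\to P_i\to \Omega^{i}N\to 0$. Apply $\Hom_R(M,-)$ and use the hypothesis $\Ext^{j}_R(M,P_i)=0$ for all $j\ge 1$: the resulting long exact sequences yield natural isomorphisms
$$\Ext^{m}_R(M,\Omega^{i}N)\;\cong\;\Ext^{m+1}_R(M,\Omega^{i+1}N)\qquad (m\ge 1,\ 0\le i\le k-1).$$
Iterating $k$ times gives $\Ext^{m}_R(M,N)\cong \Ext^{m+k}_R(M,P_k)=0$ for every $m\ge 1$, which is the desired conclusion.

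The argument is essentially standard dimension shifting, and the only nontrivial ingredient is the Jensen-type inclusion $\Flat(R)\subseteq \Proj(R)^{\gorro}$ under a finite left finitistic projective dimension; this is the one potential obstacle, but it is already available in the literature and is invoked elsewhere in the paper under exactly the same hypothesis, so no new work is needed there.
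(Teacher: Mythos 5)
Your proposal is correct and follows essentially the same route as the paper's proof: both invoke Jensen's result that $\mathrm{LeftFPD}(R)<\infty$ forces $\Flat(R)\subseteq\Proj(R)^{\gorro}$, deduce $\Flat(R)^{\gorro}\subseteq\Proj(R)^{\gorro}$, and conclude by dimension shifting along a finite projective resolution. Your write-up merely makes explicit the short induction behind the inclusion $\Flat(R)^{\gorro}\subseteq\Proj(R)^{\gorro}$, which the paper states without detail.
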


\begin{proof}
Consider $F \in \Flat (R)$, then by  \cite[Proposition 6]{Jensen} we have that $\pd (F) < \infty$, thus $\Flat (R) \subseteq \Proj(R) ^{\gorro}$. From this, it follows that $\Flat (R) ^{\gorro} \subseteq \Proj (R) ^{\gorro}$.  Now for $T \in \Flat (R) ^{\gorro}$ there is a projective resolution
$$ 0 \to Q_n \to \cdots \to Q_1 \to Q_0 \to T \to 0,$$
since $\Ext  ^{m} _{R}(M,Q) =0$ for all $m > 0 $ and all $Q \in \Proj (R)$, by a dimension shifting we have $\Ext ^{j} _R (M, T) \cong \Ext _R ^{j+n} (M,Q_n) =0 $, where the last term is zero since $Q_n \in \Proj (R)$. Thus, $\Ext _R ^j (M, T) =0$ for all $j>0$. 
\end{proof}

\begin{cor}\label{Finitista}
Let $R$ be a ring such that $ \mathrm{LeftFPD} (R) < \infty$, then $ \DP (R) = \GP (R)$.
\end{cor}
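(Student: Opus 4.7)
The plan is a direct application of Lemma \ref{Ortogonalidad} to the syzygies of a totally acyclic complex of projectives. The inclusion $\DP(R) \subseteq \GP(R)$ holds without any hypothesis on $R$: if $\mathbf{P}$ is an exact complex of projectives with $\Hom_R(\mathbf{P}, F)$ exact for every flat $F$, then in particular $\Hom_R(\mathbf{P}, Q)$ is exact for every projective $Q$, since $\Proj(R) \subseteq \Flat(R)$.

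For the reverse inclusion, I would fix $M \in \GP(R)$ and a totally acyclic complex of projectives
\begin{equation*}
\mathbf{P}\colon \cdots \to P_1 \to P_0 \to P^0 \to P^1 \to \cdots
\end{equation*}
with $M = Z_0(\mathbf{P})$ and $\Hom_R(\mathbf{P}, Q)$ exact for every projective $Q$. The crucial observation is that every cycle $K$ of $\mathbf{P}$ is itself Gorenstein projective, since it arises as $Z_0$ of a shift of $\mathbf{P}$; in particular, each such $K$ satisfies $\Ext^m_R(K, \Proj(R)) = 0$ for all $m > 0$. This is exactly the hypothesis of Lemma \ref{Ortogonalidad}.

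Applying Lemma \ref{Ortogonalidad} to each cycle $K$ yields $\Ext^m_R(K, \Flat(R)^{\gorro}) = 0$ for all $m > 0$. Since every flat module lies trivially in $\Flat(R)^{\gorro}$ (with $\Flat$-resolution dimension $0$), this specialises to $\Ext^m_R(K, F) = 0$ for all $m > 0$ and all $F \in \Flat(R)$. Translating this vanishing at the level of syzygies back into exactness of a complex, $\Hom_R(\mathbf{P}, F)$ is exact for every flat $F$, which is precisely the definition of $M \in \DP(R)$. There is no genuine obstacle here: the hypothesis $\mathrm{LeftFPD}(R) < \infty$ enters solely through Lemma \ref{Ortogonalidad}, whose proof invokes Jensen's bound $\pd(F) < \infty$ for $F \in \Flat(R)$; once that lemma is in hand the argument is a short bookkeeping chain.
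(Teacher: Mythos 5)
Your proposal is correct and takes essentially the same route as the paper: the inclusion $\DP(R)\subseteq\GP(R)$ is free, and the reverse inclusion is obtained by applying Lemma \ref{Ortogonalidad} to the cycles of a totally acyclic complex of projectives and then translating the resulting $\Ext$-vanishing against $\Flat(R)$ back into acyclicity of $\Hom_R(\mathbf{P},\Flat(R))$. The only cosmetic difference is that the paper packages your two translation steps (cycles lie in ${}^{\ortogonal}\Proj(R)$, and $\Ext$-vanishing on cycles gives exactness of the Hom complex) as citations of \cite[Lemma 3.10]{BMS}, whereas you argue them directly via the standard syzygy bookkeeping.
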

 
 \begin{proof}
Indeed, by definition and without conditions on the ring $\DP (R) \subseteq \GP (R)$. Now let's consider an exact complex of projectives $\mathbf{P}$, such that the complex  $\Hom _{R} (\mathbf{P}, \Proj (R))$ is an acyclic complex, by using  \cite[Lemma 3.10]{BMS} we have that all the cycles of the complex $Z^{i} _{\mathbf{P}} \in {^{\ortogonal} \Proj(R)}$, accordingly by the Lemma \ref{Ortogonalidad} we have that $Z^{i} _{\mathbf{P}} \in {^{\ortogonal} \Flat (R)} $, and again using \cite[Lemma 3.10]{BMS}, we obtain that the complex $\Hom _{R} ( \mathbf{P}, \Flat (R))$ is acyclic, thus we obtain the containment $\GP (R) \subseteq \DP (R)$.
  \end{proof}

The following result shows how to know when equality $\GP (R) = \DP(R)$ occurs,  namely, when $\id (\Flat (R)) < \infty$.

\begin{pro}\label{IgualdadDing}
Let $R$ be a ring such that  $\Flat (R) \subseteq \Inj (R) ^{\cogorro}$, then $\GP (R) = \DP (R)$.
\end{pro}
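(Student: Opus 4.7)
The plan is to establish the two inclusions separately. The containment $\DP(R)\subseteq \GP(R)$ is immediate from the definitions, since $\Proj(R)\subseteq \Flat(R)$: any totally acyclic complex of projectives which stays exact after applying $\Hom_R(-,F)$ for every flat $F$ in particular stays exact after applying $\Hom_R(-,Q)$ for every projective $Q$. This observation is already noted in the proof of Corollary~\ref{Finitista} and requires no hypothesis on $R$.

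For the reverse containment, I would follow the template of the proof of Corollary~\ref{Finitista} almost verbatim. Take $M\in \GP(R)$ together with an exact complex of projectives $\mathbf{P}$ such that $M=Z_0(\mathbf{P})$ and $\Hom_R(\mathbf{P},\Proj(R))$ is acyclic; the goal is to upgrade this to the acyclicity of $\Hom_R(\mathbf{P},F)$ for every $F\in \Flat(R)$. By \cite[Lemma 3.10]{BMS}, this acyclicity is equivalent to the orthogonality $\Ext^{j}_R(Z^{i}_{\mathbf{P}},F)=0$ for all $i$ and all $j\geq 1$. Since each cycle $Z^{i}_{\mathbf{P}}$ is itself Gorenstein projective, the whole proposition reduces to the single assertion: \emph{for every $N\in\GP(R)$ and every $F\in\Flat(R)$, $\Ext^{j}_R(N,F)=0$ whenever $j\geq 1$}.

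To prove this key claim, I would invoke the hypothesis $\Flat(R)\subseteq \Inj(R)^{\cogorro}$ by fixing $n:=\id(F)<\infty$, so that $\Ext^{>n}_R(-,F)=0$ universally. On the other hand, since $N$ is Gorenstein projective it admits a right projective coresolution $0\to N\to P^0\to P^1\to\cdots$ whose successive cokernels $N_k$ all lie in $\GP(R)$. Splicing this into short exact sequences $0\to N_k\to P^k\to N_{k+1}\to 0$ and using the trivial vanishing $\Ext^{\geq 1}_R(P^k,F)=0$ yields by dimension shifting the natural isomorphisms
$$\Ext^{j}_R(N,F)\;\cong\;\Ext^{j+k}_R(N_k,F)\qquad (j\geq 1,\; k\geq 0),$$
and picking $k$ with $j+k>n$ makes the right-hand side vanish.

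The only point demanding a moment's care is the dimension-shifting step, which relies on the fact that the higher cosyzygies $N_k$ of a Gorenstein projective module remain Gorenstein projective (and in particular the coresolution exists and decomposes into short exact sequences). This is standard and has already been used implicitly in the preceding results, so I expect no genuine obstacle. Combining the two inclusions gives $\GP(R)=\DP(R)$.
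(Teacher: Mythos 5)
Your proof is correct, and it reaches the same conclusion by the same underlying mechanism as the paper, but executed on the other side. The paper settles the key containment $\GP(R)\subseteq\DP(R)$ in one line by appealing to the proof of \cite[Lemma 3.6]{Becerril21}, which in effect shows that $\Hom_R(\mathbf{P},F)$ is acyclic for any exact complex of projectives $\mathbf{P}$ by inducting along the finite injective coresolution of $F\in\Flat(R)\subseteq\Inj(R)^{\cogorro}$. You instead translate the desired acyclicity into the Ext-vanishing $\Ext^{j}_R(Z^{i}_{\mathbf{P}},\Flat(R))=0$ via \cite[Lemma 3.10]{BMS} — exactly the template of the paper's Corollary \ref{Finitista} — and then dimension-shift along the projective coresolution of the cycle, using $\id(F)<\infty$ to annihilate $\Ext^{j+k}_R(N_k,F)$ for $j+k>\id(F)$. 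Both are routine dimension shifts exploiting the hypothesis $\Flat(R)\subseteq\Inj(R)^{\cogorro}$; yours has the merit of being self-contained rather than deferring to an external proof, and it even makes visible that total acyclicity of $\mathbf{P}$ is never used — any cycle of an exact complex of projectives is right-orthogonal to modules of finite injective dimension. One small remark: the point you flag as needing care (that the cosyzygies $N_k$ stay Gorenstein projective) is not actually needed for your argument; all you use is the existence of the right half of the defining complex, which is part of the data of $N\in\GP(R)$, together with $\Ext^{\geq 1}_R(P^k,F)=0$.
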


\begin{proof}
Consider an exact complex $\mathbf{P}$ of projective $R$-modules  such that the complex  $\Hom _R (\mathbf{P}, \Proj (R))$ is acyclic. From the proof of  \cite[Lemma 3.6]{Becerril21} we see that for each $F \in \Flat (R) \subseteq \Inj (R) ^{\cogorro}$ we have that  $\Hom _{R} (\mathbf{P}, F)$ is an acyclic complex. Thus we obtain the containment 
$\GP (R) \subseteq \DP (R)$.
\end{proof}

\begin{rk}
We can see from \cite[Proposition 3.10]{Becerril21} and the previous result, that for a ring $R$ if  $\mathrm{gl.GP}(R) < \infty$ and $\Flat (R) \subseteq \Inj (R) ^{\cogorro}$, then $\id (\Flat (R)) < \infty$.
\end{rk}

  Now we sill see that the Corollary \ref{Finitista} is more general than how it was stated. As we see below, it can be stated in terms of GP-admissible pairs in the following result.
 
  \begin{pro} \label{Aplastado}
  Consider a GP-admissible pair $(\X ,\Y)$ in an abelian category $\C$ with $\omega := \X \cap \Y$, such that $\Y ^{\gorro}$, $\omega$ and $\X \cap \Y ^{\gorro}$ are closed under direct summands in $\C$. If $(\X, \overline{ \Y}) \subseteq \C^{2}$ is a pair with $\Y \subseteq \overline{\Y} \subseteq \Y ^{\gorro}$, then equality  $\GP _{(\X, \Y)} = \GP _{(\X, \overline{\Y})}$ is given.
  \end{pro}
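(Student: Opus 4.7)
The plan is to prove the two inclusions separately, with only the nontrivial direction requiring work. The easy inclusion $\GP_{(\X, \overline{\Y})} \subseteq \GP_{(\X, \Y)}$ is immediate: since $\Y \subseteq \overline{\Y}$, any complex $\mathbf{X}$ of objects in $\X$ that is $\Hom_{\C}(-,\overline{Y})$-acyclic for every $\overline{Y}\in\overline{\Y}$ is automatically $\Hom_{\C}(-,Y)$-acyclic for every $Y\in\Y$.

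For the reverse inclusion, I would take $M\in\GP_{(\X,\Y)}$ with witnessing exact complex $\mathbf{X}=\cdots\to X_1\to X_0\to X^0\to X^1\to\cdots$ of objects in $\X$, with cycles $Z_i$ and $M=Z_0(\mathbf{X})$. The goal is to verify that $\Hom_{\C}(\mathbf{X},\overline{Y})$ is acyclic for every $\overline{Y}\in\overline{\Y}$. The first step is to observe that every cycle $Z_i$ lies in $\Y^{\ortogonal}$: indeed from the short exact sequences $0\to Z_{i+1}\to X_i\to Z_i\to 0$ together with $\X\ortogonal\Y$ and the $\Hom_{\C}(-,\Y)$-exactness of $\mathbf{X}$, a standard dimension-shifting yields $\Ext^{m}_{\C}(Z_i,Y)=0$ for all $m>0$ and $Y\in\Y$ (equivalently, one can invoke \cite[Lemma 3.10]{BMS} already used in the proof of Corollary \ref{Finitista}).

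The second step is to upgrade this orthogonality to $(\Y^{\gorro})^{\ortogonal}$. Given any $T\in\Y^{\gorro}$, fix a finite $\Y$-resolution $0\to Y_n\to\cdots\to Y_0\to T\to 0$ with $Y_j\in\Y$. Breaking this into short exact sequences $0\to K_{j+1}\to Y_j\to K_j\to 0$ (with $K_0:=T$, $K_n:=Y_n$) and applying $\Ext^{*}_{\C}(Z_i,-)$, the vanishing $\Ext^{m}_{\C}(Z_i,Y_j)=0$ from Step 1 allows an iterated dimension-shift giving $\Ext^{m}_{\C}(Z_i,T)\cong\Ext^{m+n}_{\C}(Z_i,Y_n)=0$ for every $m>0$. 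Since $\overline{\Y}\subseteq\Y^{\gorro}$, in particular $\Ext^{1}_{\C}(Z_i,\overline{Y})=0$ for every $\overline{Y}\in\overline{\Y}$ and every $i$.

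The final step closes the argument: the long exact sequence of $\Hom_{\C}(-,\overline{Y})$ applied to $0\to Z_{i+1}\to X_i\to Z_i\to 0$ reads
\[
0\to\Hom_{\C}(Z_i,\overline{Y})\to\Hom_{\C}(X_i,\overline{Y})\to\Hom_{\C}(Z_{i+1},\overline{Y})\to\Ext^{1}_{\C}(Z_i,\overline{Y})=0,
\]
so the middle map is surjective for all $i$, which is precisely the exactness of $\Hom_{\C}(\mathbf{X},\overline{Y})$. Hence $M\in\GP_{(\X,\overline{\Y})}$. The closure-under-direct-summands hypotheses on $\Y^{\gorro}$, $\omega$ and $\X\cap\Y^{\gorro}$ do not intervene in this chain of implications directly; I would expect them only to guarantee that the ambient class $\GP_{(\X,\overline{\Y})}$ is itself well-behaved (in the spirit of the Auslander--Bridger considerations invoked in Lemma \ref{DimensionesGF}). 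The only place that might require care is verifying that the dimension-shifting in Step 2 really produces the vanishing uniformly in $i$, but since each $Z_i$ individually satisfies $\X\ortogonal\Y$-type orthogonality by Step 1, no genuine obstacle arises.
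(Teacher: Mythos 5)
Your argument is correct, but it takes a genuinely different route from the paper. The paper's proof is a two-line reduction: from $\Y \subseteq \overline{\Y} \subseteq \Y^{\gorro}$ one gets the sandwich $\GP_{(\X,\Y^{\gorro})} \subseteq \GP_{(\X,\overline{\Y})} \subseteq \GP_{(\X,\Y)}$, and then the equality $\GP_{(\X,\Y^{\gorro})} = \GP_{(\X,\Y)}$ is quoted from \cite[Theorem 3.34 (d)]{BMS}; the closure-under-direct-summands hypotheses on $\Y^{\gorro}$, $\omega$ and $\X\cap\Y^{\gorro}$ are there precisely so that the cited theorem applies. You instead re-prove the nontrivial inclusion by hand: cycles of a witnessing complex lie in $\Y^{\ortogonal}$ (this is \cite[Lemma 3.10]{BMS}, using $\X\ortogonal\Y$ from GP-admissibility together with the $\Hom_{\C}(-,\Y)$-acyclicity), an iterated dimension shift along a finite $\Y$-resolution upgrades this to $(\Y^{\gorro})^{\ortogonal}$, and then $\Ext^{1}_{\C}(Z_i,\overline{Y})=0$ gives the surjectivity needed for $\Hom_{\C}(\mathbf{X},\overline{Y})$-exactness; all the dimension shifts are valid as stated. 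What your route buys is self-containedness and, in fact, a slightly stronger conclusion: you directly obtain $\GP_{(\X,\Y)} = \GP_{(\X,\Y^{\gorro})} = \GP_{(\X,\overline{\Y})}$ using only $\X\ortogonal\Y$ and the exactness conditions, so the direct-summand hypotheses are indeed never invoked (your closing remark is accurate; in the paper they enter only through the citation). What the paper's route buys is brevity and uniformity with the rest of the machinery of \cite{BMS}, at the cost of carrying those extra hypotheses in the statement.
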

\begin{proof}
 From the containment $\Y \subseteq \overline{\Y} \subseteq \Y ^{\gorro}$ we have that 
$$\GP _{(\X, \Y^{\gorro})} \subseteq \GP _{(\X, \overline{\Y})} \subseteq \GP _{(\X, \Y)},$$ 
while by \cite[Theorem 3.34 (d)]{BMS} we have that $\GP _{(\X, \Y^{\gorro})}  = \GP _{(\X, \Y)} $, from which we obtain the desired equality.
\end{proof}

 It is also naturally interesting to know when the class of the AC-Gorenstein projectives $\GP _{AC} (R)$ coincides with the class $\GP (R)$, we establish this as follows.
 
 \begin{cor}
Let us consider the class $\GP _{\mathrm{AC}}(R)$ of all the AC-Gorenstein projective $R$-modules. If any of the conditions $\mathrm{Lev} (R) \subseteq \Proj (R) ^{\gorro}$ or  $\mathrm{Lev} (R) \subseteq \Inj (R) ^{\cogorro}$, then equality $\GP (R) = \GP _{\mathrm{AC}}(R)$ is given.
 \end{cor}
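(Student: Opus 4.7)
The plan is to recognize that $\GP_{\mathrm{AC}}(R) = \GP_{(\Proj(R),\mathrm{Lev}(R))}$ and $\GP(R) = \GP_{(\Proj(R),\Proj(R))}$, so the containment $\GP_{\mathrm{AC}}(R) \subseteq \GP(R)$ is immediate from $\Proj(R) \subseteq \Flat(R) \subseteq \mathrm{Lev}(R)$. The task is therefore to establish the reverse inclusion $\GP(R) \subseteq \GP_{\mathrm{AC}}(R)$ under either hypothesis, and each condition suggests a different tool: the first via Proposition \ref{Aplastado} (a squeezing argument) and the second via a direct dimension-shifting argument in the style of Proposition \ref{IgualdadDing}.

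For the first case, $\mathrm{Lev}(R) \subseteq \Proj(R)^{\gorro}$, I would apply Proposition \ref{Aplastado} with $\X = \Y = \Proj(R)$ and $\overline{\Y} = \mathrm{Lev}(R)$. First I would verify that $(\Proj(R),\Proj(R))$ is GP-admissible: this is straightforward since $\omega = \Proj(R) \cap \Proj(R) = \Proj(R)$ is trivially a relative cogenerator in $\Proj(R)$ and $\Ext^{>0}_R(\Proj(R),\Proj(R)) = 0$. Next I would check the closure-under-direct-summands conditions: $\omega = \Proj(R)$ and $\X \cap \Y^{\gorro} = \Proj(R)$ are obviously closed under summands, and $\Proj(R)^{\gorro}$ is closed under summands by standard results. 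Finally, the sandwich $\Proj(R) \subseteq \mathrm{Lev}(R) \subseteq \Proj(R)^{\gorro}$ holds because every projective module is flat hence level, and the right-hand containment is precisely the hypothesis. Proposition \ref{Aplastado} then delivers the desired equality.

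For the second case, $\mathrm{Lev}(R) \subseteq \Inj(R)^{\cogorro}$, I would mimic the proof of Proposition \ref{IgualdadDing}. Let $M \in \GP(R)$, so there is an exact complex $\mathbf{P}$ of projective $R$-modules with $M = Z_0(\mathbf{P})$ such that $\Hom_R(\mathbf{P}, \Proj(R))$ is acyclic. By \cite[Lemma 3.10]{BMS}, every cycle $Z^i_{\mathbf{P}}$ lies in ${}^{\ortogonal}\Proj(R)$. For any $L \in \mathrm{Lev}(R)$, the hypothesis gives a finite injective coresolution $0 \to L \to I^0 \to \cdots \to I^n \to 0$; a standard dimension-shifting argument using $\Ext^{>0}_R(Z^i_{\mathbf{P}}, \Inj(R)) = 0$ shows $\Ext^{>0}_R(Z^i_{\mathbf{P}}, L) = 0$. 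Hence every cycle lies in ${}^{\ortogonal}\mathrm{Lev}(R)$, and applying \cite[Lemma 3.10]{BMS} in the other direction yields the acyclicity of $\Hom_R(\mathbf{P},L)$ for all $L \in \mathrm{Lev}(R)$. This places $M$ in $\GP_{\mathrm{AC}}(R)$.

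The main obstacle I expect is bookkeeping rather than conceptual: confirming all the direct-summand and cogenerator hypotheses of Proposition \ref{Aplastado} in the specific case $\X = \Y = \Proj(R)$ (particularly the closure of $\Proj(R)^{\gorro}$ under summands) and then making sure the dimension-shifting for the second case works with just an injective coresolution of finite length — essentially re-running the argument of Proposition \ref{IgualdadDing} with $\Flat(R)$ replaced by $\mathrm{Lev}(R)$. Neither step requires new ideas; both are adaptations of results already established in the paper.
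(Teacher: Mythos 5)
Your first case is correct and is exactly the paper's route: the paper disposes of the hypothesis $\mathrm{Lev}(R) \subseteq \Proj(R)^{\gorro}$ by citing Proposition \ref{Aplastado}, and your instantiation $\X=\Y=\Proj(R)$, $\overline{\Y}=\mathrm{Lev}(R)$, together with the checks of GP-admissibility of $(\Proj(R),\Proj(R))$ and of the direct-summand conditions, is what that reduction requires.

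The gap is in your second case. Your overall plan (rerun Proposition \ref{IgualdadDing} with $\Flat(R)$ replaced by $\mathrm{Lev}(R)$) is also the paper's, but the mechanism you describe fails: you propose to deduce $\Ext^{>0}_R(Z^i_{\mathbf{P}},L)=0$ from $\Ext^{>0}_R(Z^i_{\mathbf{P}},\Inj(R))=0$ by shifting along a finite injective coresolution $0\to L\to I^0\to\cdots\to I^n\to 0$. Since $\Ext^{>0}_R(N,\Inj(R))=0$ holds for \emph{every} module $N$, that argument, if valid, would show that every module is left orthogonal to every module of finite injective dimension, which is false (for instance $\Ext^1_{\mathbb{Z}}(\mathbb{Z}/2,\mathbb{Z})\neq 0$, with $\id(\mathbb{Z})=1$). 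Shifting along the coresolution of $L$ only kills $\Ext^j$ for $j>n$; in low degrees you are left with terms of the form $\Ext^1_R(Z^i_{\mathbf{P}},C)$ for cosyzygies $C$ of $L$, which do not vanish for free. The correct shift goes along $\mathbf{P}$ itself: since $\mathbf{P}$ is exact with projective components, each cycle is a $k$-th syzygy of another cycle of $\mathbf{P}$ for every $k\geq 0$, so $\Ext^j_R(Z^i_{\mathbf{P}},L)\cong \Ext^{j+k}_R(Z^{i-k}_{\mathbf{P}},L)$, and the right-hand side vanishes as soon as $j+k>\id(L)\leq n$. Equivalently, argue as in the lemma the paper cites: $\Hom_R(\mathbf{P},I)$ is acyclic for $I\in\Inj(R)$ because $\Hom_R(-,I)$ is exact and $\mathbf{P}$ is exact, and then induct on the length of the coresolution using the short exact sequences of $\Hom$-complexes (exactness on the right coming from projectivity of the components of $\mathbf{P}$). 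Note that this corrected argument uses only the exactness of $\mathbf{P}$, not the condition $Z^i_{\mathbf{P}}\in{}^{\ortogonal}\Proj(R)$; with this repair your proof coincides with the paper's.
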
 
 
 \begin{proof}
  It follows from the Proposition \ref{Aplastado}  or with a  similar proof to the one given in the Proposici\'on \ref{IgualdadDing}, respectively.
 \end{proof}

\textbf{Acknowledgements} The author thanks to professor Marco A. Per\'ez for suggestions and nice discussions about  this paper. The author was fully supported by a  CONACyT Postdoctoral Fe\-llow\-ship.

\end{document}